\pgfplotsset{compat=1.9}
\newtheorem{theorem}{Theorem}[section]
\newtheorem{lemma}[theorem]{Lemma}
\newtheorem{proposition}[theorem]{Proposition}
\newtheorem{corollary}[theorem]{Corollary}
\newtheorem*{subtheorem}{Theorem}
\newtheorem*{maintheorem}{Main Theorem}
\theoremstyle{definition}
\newtheorem{definition}[theorem]{Definition}
\newtheorem{example}[theorem]{Example}
\newtheorem{remark}[theorem]{Remark}
\numberwithin{equation}{section}
\newcommand{\Eq}[1]{\begin{align}#1\end{align}}
\title{On the correspondence between perfect matchings and compatible pairs for affine cluster algebra}
\author{Ivan Ip\footnote{
	  Department of Mathematics, Hong Kong University of Science and Technology\newline
	  Email: ivan.ip@ust.hk\newline		
	  Email: ndphan@connect.ust.hk}
	  , Duy Phan\footnotemark[1]}
\date{\today}
\begin{document}

\maketitle

\begin{abstract}
We study cluster algebra of affine type $A_1^{(1)}$ by using two methods including counting the numbers of perfect matchings on snake graphs and compatible pairs on maximal Dyck paths. We find that the sum of coefficients of the terms in the Laurent polynomials of these cluster variables are odd-indexed Fibonacci numbers. In addition, we prove that the numbers of non-decreasing Dyck paths of even lengths are also odd-indexed Fibonacci numbers. As a consequence, we define explicit bijective correspondences among three combinatorial models, including perfect matchings on the snake graph, compatible pairs on the maximal Dyck path, and non-decreasing Dyck paths of even lengths.
\end{abstract}

\section{Introduction}

\emph{Cluster algebra} was first introduced by Fomin and Zelevinsky due to their inspiration for solving total positivity problems \cite{fomin2002cluster}. A type of commutative ring known as cluster algebra is made up of generators known as \emph{cluster variables}. Cluster variables are produced from the initial $n$ cluster variables by a process known as \emph{mutations}. All cluster variables can be written as a rational function of $n$ initial cluster variables. Furthermore, in \cite{fomin2002cluster} Fomin and Zelevinsky also proved that these rational functions are actually Laurent polynomials.\\

In this paper, we consider the \emph{affine type} cluster algebra $\mathcal{A}(2,2)$ of rank $2$ and type $A_1^{(1)}$ with an initial cluster $(x_1,x_2)$ and mutations expressed as a recurrence relation
\Eq{
x_{n-1}x_{n+1}=x_n^2+1,\quad n\in\mathbb{Z}
} (see \cite{zelevinsky2007cluster}). This cluster algebra is also associated with the surface of an annulus with one marked point on each boundary circle. The curves on this surface, also known as \emph{arcs}, correspond to cluster variables. \emph{Triangulations} of this surface correspond to clusters, and changes of triangulations correspond to mutations.\\

In \cite{musiker2010cluster}, cluster algebra associated with oriented unpunctured Riemann surface with boundaries is studied, by counting the \emph{perfect matchings} on the \emph{snake graphs} obtained from arcs on the surface. On the other hand, in \cite{lee2014greedy}, it is shown that cluster variables in rank $2$ cluster algebra are \emph{greedy elements} of the form $x[a_1,a_2]$. The greedy elements are computed by counting \emph{compatible pairs} on the \emph{maximal Dyck path} $\mathcal{D}^{a_1 \times a_2}$ of a rectangle of size $a_1 \times a_2$.\\

In this paper, we use these two methods to show that the numbers of perfect matchings on the snake graph $G_{T,\gamma_{n+3}}$ and the number of compatible pairs on the maximal Dyck path $\mathcal{D}^{\left(n+1\right)\times n}$ are both \emph{Fibonacci numbers} $F_{2n+3}$. This new calculation implies that in the coefficient free case, the sum of the integer coefficients in the Laurent polynomials of these cluster variables are the odd-indexed Fibonacci numbers. As a result, we come up with the idea of creating a new bijective correspondence between these two combinatorial patterns. This bijective map is defined by the following theorem (see Theorem \ref{thm: phi}).

\begin{subtheorem}
\label{thm: sub}

The set map
\[\phi: \{\text{perfect matchings on } G_{T,\gamma_{n+3}}\}
\mapsto \{\text{compatible pairs on } \mathcal{D}^{\left(n+1\right)\times n}\}\]
\[P \to (S_1,S_2) \]
defined by the conditions
\begin{itemize}
    \item $u_i\in S_1 \iff A_{2i}A_{2i+1},B_{2i}B_{2i+1}\in P$ for any $i=0,1,...,n$.
    \item $v_i\in S_2 \iff A_{2i-1}A_{2i},B_{2i-1}B_{2i}\in P$ for any $i=1,2,...,n$.
\end{itemize}
is a bijective correspondence.
\end{subtheorem}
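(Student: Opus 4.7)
The plan is to establish the bijection via three main steps: (i) prove that $\phi$ is well-defined, namely that $(S_1,S_2):=\phi(P)$ is indeed a compatible pair on $\mathcal{D}^{(n+1)\times n}$; (ii) prove that $\phi$ is injective; (iii) invoke the cardinality match $F_{2n+3}$ established earlier in the paper to upgrade injectivity to bijectivity.

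Before tackling these, I would fix explicit coordinates. The snake graph $G_{T,\gamma_{n+3}}$ should be described as a zig-zag strip whose tiles carry horizontal ``twin'' edges $A_{2i}A_{2i+1}$, $B_{2i}B_{2i+1}$ (encoding the $u_i$ directions) and $A_{2i-1}A_{2i}$, $B_{2i-1}B_{2i}$ (encoding the $v_i$ directions), with the remaining vertical edges $A_jB_j$ serving as connectors between successive tiles. On the Dyck-path side I would record $\mathcal{D}^{(n+1)\times n}$ as the alternating sequence of horizontal steps $u_0,\dots,u_n$ and vertical steps $v_1,\dots,v_n$, and recall the Lee--Li--Zelevinsky compatibility criterion that $(S_1,S_2)$ must satisfy a slope-type inequality on every subpath of $\mathcal{D}^{(n+1)\times n}$. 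With this dictionary in place, the definition of $\phi$ says: a horizontal (resp.\ vertical) step of $\mathcal{D}$ lies in $\phi(P)$ precisely when \emph{both} horizontal edges of the corresponding tile belong to $P$.

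For well-definedness, the key observation is that once a horizontal twin $A_{2i}A_{2i+1}, B_{2i}B_{2i+1}$ is selected in $P$, the neighbouring tiles in the snake graph are constrained in their matching choices because the shared vertices $A_{2i}, B_{2i}, A_{2i+1}, B_{2i+1}$ are already used. Propagating this constraint along the strip, consecutive elements of $S_1 \cup S_2$ cannot cluster arbitrarily densely; I expect the forbidden patterns to match exactly the subpath inequalities that characterize compatibility. Injectivity, on the other hand, is comparatively routine: once the set of ``double-horizontal'' tiles is fixed, the complementary tiles in the snake graph must be matched by their vertical edges in a uniquely determined way, so $(S_1,S_2)$ reconstructs $P$.

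The main obstacle is the compatibility verification, which requires translating local edge-selection patterns in $P$ into the global slope condition on $\mathcal{D}^{(n+1)\times n}$; an equivalent and possibly cleaner route is to build an explicit inverse $\psi$ that, from a compatible pair $(S_1,S_2)$, assigns the horizontal twin edges on the tiles indexed by $S_1 \cup S_2$ and extends to a perfect matching using the forced-edges rule, then to check that $\psi \circ \phi = \mathrm{id}$ tile by tile. Either way, combining the verified well-definedness with the already-established count $|\{\text{perfect matchings}\}| = |\{\text{compatible pairs}\}| = F_{2n+3}$ upgrades the injective, well-defined map $\phi$ to a bijection.
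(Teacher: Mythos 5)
Your overall architecture is the same as the paper's: show $\phi$ lands in compatible pairs, deduce injectivity from the fact that the vertical edges of a perfect matching are forced once the doubled horizontal edges are fixed, and then upgrade injectivity to bijectivity via the common count $F_{2n+3}$ (the paper's Corollary \ref{clusFib} and Theorem \ref{compFib}). The genuine gap is the step you explicitly defer: the compatibility verification, where you only ``expect the forbidden patterns to match exactly the subpath inequalities.'' What a perfect matching gives you directly is merely the local statement that $u_i\in S_1$ forces $v_i,v_{i+1}\notin S_2$, i.e.\ $\{\,j-i \mid u_i\in S_1,\ v_j\in S_2\,\}\cap\{0,1\}=\emptyset$. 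Compatibility in the Lee--Li--Zelevinsky sense is a condition on \emph{every} pair $(u_i,v_j)$, including far-apart ones and subpaths that wrap around from $(n+1,n)$ back to $(0,0)$, so passing from the local exclusion to full compatibility is not automatic. That equivalence is precisely the paper's Lemma \ref{lem: j-i}, and its proof is the technical heart of the argument: one introduces the step functions $f,g$, reformulates compatibility as a maximum condition (Proposition \ref{prop: max}) using the fact that $f$ and $g$ can increase by at most $1$ along the path, and then runs a two-case counting argument (separately for $j-i>1$ and $i>j$, the latter handling the wrap-around) to rule out the failure of both inequalities. Nothing in your proposal supplies or even sketches this reduction, and it is not a routine check.

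Note also that your alternative route via an explicit inverse does not avoid the issue: to know that a compatible pair never selects $u_i$ and $v_j$ with $j-i\in\{0,1\}$ (so that the assigned twin horizontal edges are vertex-disjoint and the vertical completion is a genuine perfect matching) you need the converse direction of the same Lemma \ref{lem: j-i}, which, while easier, still has to be proved from Definition \ref{compatibledef} (the paper does this by evaluating $f$ and $g$ on the short subpaths $A_iA_{i+1}$ and $A_iA_{i+2}$). So your plan is the right one and matches the paper's, but the missing characterization of compatibility on $\mathcal{D}^{(n+1)\times n}$ is the actual mathematical content of the theorem; you must either prove it or invoke the paper's Section \ref{sec4} results explicitly.
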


Next, we expand our interest in the combinatorial patterns involving the Dyck path and the odd-indexed Fibonacci numbers. We find a combinatorial pattern called \emph{nondecreasing Dyck paths} of even length. The number of nondecreasing Dyck paths of length $2n+4$ is also $F_{2n+3}$. This model has significance in \emph{computer science} and has been studied by Deutsch and Prodinger \cite{deutsch2003bijection}. They showed that it has many connections with other combinatorial models such as \emph{directed column-convex polynominoes} and \emph{ordered trees} of height at most three. Therefore, we include nondecreasing Dyck paths of even length in our study.\\

Note that here the term \enquote{Dyck path} is defined slightly differently in \enquote{maximal Dyck path} and \enquote{nondecreasing Dyck path}, see Remark \ref{rem: 2 dyck} for an explanation.\\

\begin{figure}[h]
\centering
\begin{tikzpicture}[line cap=round,line join=round,>=triangle 45,x=0.8cm,y=0.8cm]
    
    \draw [line width=1pt, color=black, ->] (0.8,1.4) -- (2.8,4.4);
    \draw [line width=1pt, color=black, ->] (2,0) -- (6,0);
    \draw [line width=1pt, color=black, <-] (7,1.8) -- (5,4.8);

    \draw [line width=1pt, color=black, <-] (0.5,2) -- (2.5,5);
    \draw [line width=1pt, color=black, <-] (2,-0.5) -- (6,-0.5);
    \draw [line width=1pt, color=black, ->] (7.7,1.8) -- (5.7,4.8);
    
    \draw [line width=1pt, color=black, fill=white] (0,0) circle (2);
    \draw [line width=1pt, color=black, fill=white] (8,0) circle (2);
    \draw [line width=1pt, color=black, fill=white] (4,6) circle (2);
    
\begin{scriptsize}
    \draw[color=black] (0,0.8) node {perfect matchings};
    \draw[color=black] (0,-0.2) node {on the snake graph};
    \draw[color=black] (0,-1.2) node {$G_{T,\gamma_{n+3}}$};

    \draw[color=black] (4,6.8) node {compatible pairs on};
    \draw[color=black] (4,5.8) node {the maximal Dyck path};
    \draw[color=black] (4,4.8) node {$\mathcal{D}^{(n+1) \times n}$};

    \draw[color=black] (8,0.8) node {nondecreasing};
    \draw[color=black] (8,-0.2) node {Dyck paths of length};
    \draw[color=black] (8,-1.2) node {$2n+4$};

    \draw[color=black] (2.1,2.8) node {$\phi$};
    \draw[color=black] (6,2.8) node {$\theta$};
    \draw[color=black] (4,0.3) node {$\psi$};

    \draw[color=black] (4,2.3) node {Fibonacci numbers};
    \draw[color=black] (4,1.7) node {$F_{2n+3}$};

\end{scriptsize}
\end{tikzpicture}
\caption{Main correspondence.}
\label{fig: main}
\end{figure}
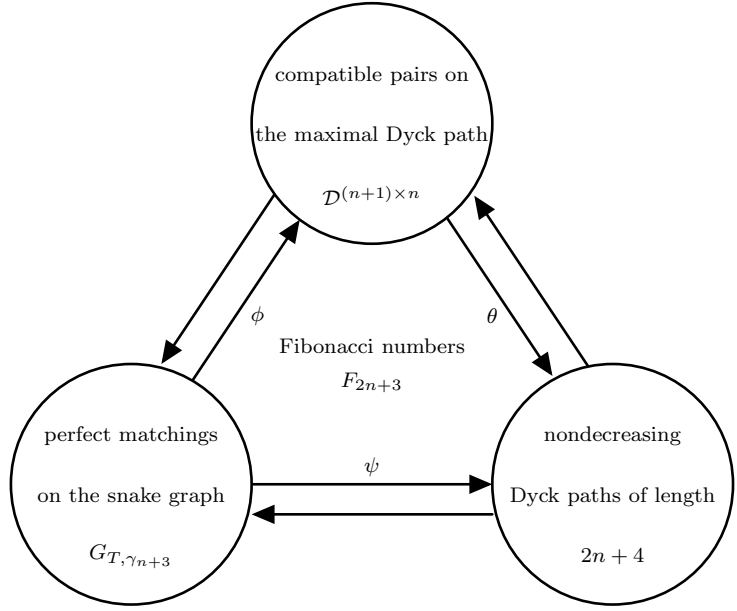

Finally, this paper constructs two more new maps $\theta$ and $\psi$ to complete the diagram (see Figure \ref{fig: main}) of correspondences among the three combinatorial models.

\begin{maintheorem}
\label{thm: main}
The maps $\phi$, $\theta$ and $\psi$ on Figure \ref{fig: main} give a bijective correspondence among perfect matchings on the snake graphs $G_{T,\gamma_{n+3}}$, compatible pairs on maximal Dyck paths $\mathcal{D}^{\left(n+1\right)\times n}$, and nondecreasing Dyck paths of length $2n+4$.
\end{maintheorem}
We expect our explicit construction in the case of $\mathcal{A}(2,2)$ cluster algebra considered here will shed light into finding the analogy of bijective correspondences in higher rank combinatorial models and discrete dynamical systems \cite{FZ,Kedem,Nakanishi}, in particular to cluster algebra arising from other surfaces and the more general canonical theta bases constructed in \cite{GHKK}, in which the greedy basis is a special case \cite{CGMMRSW}. A $q$-graded version of the correspondence may also be studied in relation to quantum cluster algebra and their quantum theta basis \cite{DM}.\\

The paper is organized as follows. We recall the general definition of cluster algebra in Section \ref{sec2.1} and then cluster algebra from surfaces in Section \ref{sec2.2}. In Section \ref{sec2.3}, we recall the construction of snake graphs and expansion formula for cluster variables in terms of perfect matchings on these graphs. In Section \ref{sec2.4}, we recall the definition of compatible pairs on maximal Dyck paths. We give recursion relations between an extended sequence of cluster variables and count the number of perfect matchings related to cluster algebra on an annulus in Section \ref{sec3}. In Section \ref{sec4} and Section \ref{sec5}, we prove that the number of compatible pairs on the maximal Dyck path and nondecreasing paths of length $2n$ are both odd-indexed Fibonacci numbers. In Section \ref{sec6}, we give the proof of the Main Theorems.

\section*{Acknowledgment}
This study is conducted under the Undergraduate Research Opportunities Program (UROP) at The Hong Kong University of Science and Technology. 
The first author is supported by the Hong Kong RGC General Research Funds [GRF \#16305122].

\section{Preliminaries}
\label{sec2}
In this section, we recall some basic definitions and properties about cluster algebra arising from surfaces and their expansion formulas using snake graphs, as well as the greedy basis of rank $2$ cluster algebra.

\subsection{Cluster algebra}
\label{sec2.1}

We review the construction of cluster algebra, first introduced by Fomin and Zelevinsky \cite{fomin2002cluster}. Our terminologies are based on \cite{fomin2007cluster}, that are rewritten in \cite{lee2013positivity,schiffler2010cluster,williams2014cluster}.\\

To define cluster algebra $\mathcal{A}$, we must first fix its ground ring.

\begin{definition}
A \emph{semifield} $(\mathbb{P}, \oplus, \circ)$ is  an abelian multiplicative group $(\mathbb{P}, \circ)$ together with a binary operation $\oplus$ such that
\[ \oplus: \mathbb{P} \times \mathbb{P} \longrightarrow \mathbb{P}\]
\begin{equation}
    (p,q) \mapsto p \oplus q
\end{equation}
is commutative, associative, and distributive:
\begin{equation}
    p \circ (q \oplus r) = p \circ q \oplus p \circ r
\end{equation}
\end{definition}

\begin{remark}
The binary operation $\oplus$ may not be invertible.
\end{remark}

Let $\mathbb{Z} \mathbb{P}$ be the group ring of $(\mathbb{P}, \circ)$, which is torsion free. Hence we can define the field of rational functions $\mathcal{F} = \mathbb{Q}\mathbb{P}(x_1,x_2,...,x_n)$ in $n$ variables with coefficients in $\mathbb{Q}\mathbb{P}$.

\begin{definition}
    A \emph{seed} is a triple $\Sigma =(\textbf{x},\textbf{y},B)$ where
    \begin{itemize}
        \item $\textbf{x}= \{x_1,x_2,...,x_n\}$ is a transcendental basis of $\mathcal{F}$ over $\mathbb{Q}\mathbb{P}$.
        \item $\textbf{y} =\{y_1,y_2,...,y_n\}$ is an $n$-tuple of elements $y_i \in \mathbb{P}$.
        \item $B=(b_{ij})$ is a skew-symmetrizable $n \times n$ integer matrix.
    \end{itemize}
\end{definition}

The set $\textbf{x}$ is called a \emph{cluster} and its elements $x_i$ are the \emph{cluster variables}. The set $\textbf{y}$ is called the \emph{coefficient tuple} and $B$ is called the \emph{exchange matrix}.\\

Throughout the paper, for any $x\in \mathbb{R}$ we will use the notation 
\Eq{
[x]_{+}:= \max(x,0).
}
\begin{definition}
A \emph{mutation} $\mu_k(\textbf{x}, \textbf{y},B)$ is a new seed $\Sigma' := (\textbf{x}', \textbf{y}',B')$ such that
\begin{itemize}
\item $\mathbf{x}':=(\mathbf{x} \setminus \{x_k\}) \cup \{x_k'\}$ where $x_k' \in \mathcal{F}$ is determined by the \emph{exchange relation}
\begin{equation}
    x_k':= \frac{y_k \Pi x_i^{[b_{ik}]_{+}} + \Pi x_i^{[-b_{ik}]_{+}}}{(y_{k} \oplus 1)x_k},
\end{equation}

\item $\mathbf{y}':=(y_1',y_2',...,y_n')$ where
\begin{equation}
    y_j':= \left\{
\begin{array}{ll}
  y_k^{-1}   & \text{ if } j=k,  \\
  y_jy_k^{[b_{kj}]_{+}}(y_k \oplus 1)^{-b_{kj}} & \text{ if } j \neq k,
\end{array}
\right.
\end{equation}

\item $B':=(b_{ij}')$ where
\begin{equation}
    b_{ij}':= \left\{
\begin{array}{ll}
  -b_{ij}   & \text{ if } i=k \text{ or } j=k,  \\
   b_{ij}+ [-b_{ik}]_{+} b_{kj} +b_{ik}[b_{kj}]_{+} & \text{ otherwise.}
\end{array}
\right.
\end{equation}
\end{itemize}
\end{definition}

\begin{definition}
Two seeds $\Sigma_1$ and $\Sigma_2$ are called \emph{mutation equivalent} if there is a finite sequence of mutations $\mu= \mu_{i_s}...\mu_{i_2}\mu_{i_1}$ such that $\mu\Sigma_1=\Sigma_2$.
\end{definition}

Let $\mathcal{X}$ be the union of all clusters $\mathbf{x}'$ belonging to a seed $(\mathbf{x}',\mathbf{y}',B')$ that is mutation equivalent to the initial seed $(\mathbf{x},\mathbf{y}, B)$. Now, we are ready to define cluster algebra.

\begin{definition}
The \emph{cluster algebra} $\mathcal{A}(\mathbf{x},\mathbf{y},B)$ is the $\mathbb{Z} \mathbb{P}$-subalgebra of the field $\mathcal{F}$ generated by the set of all cluster variables
\begin{equation}
     \mathcal{A}(\mathbf{x},\mathbf{y},B):= \mathbb{Z}\mathbb{P}[\mathcal{X}]
\end{equation}
We say that the cluster algebra is \emph{coefficient free} if $\mathbb{P}=1$, in which $\mathbb{ZP}=\mathbb{Z}$.
\end{definition}

\begin{definition}
The \emph{tropical semifield} $(\text{Trop}(u_1,...,u_m), \oplus, \circ)$ is defined to be the semifield of monomials in independent variables $u_j$’s with usual multiplication and semifield addition $\oplus$ given by
\begin{equation}
    \prod_{j} u_j^{a_j} \oplus \prod_{j} u_j^{b_j} = \prod_{j} u_j^{\min(a_j,b_j)}.
\end{equation}
 Note that the group ring of $\text{Trop}(u_1,...,u_m)$ is isomorphic to the ring of Laurent polynomials in the variables $u_j$.
\end{definition}

\begin{definition}
A cluster algebra $\mathcal{A}(\mathbf{x},\mathbf{y},B)$ is said to have \emph{principal coefficients} if the coefficient semifield $\mathbb{P}= \text{Trop}(y_1,y_2,...,y_n)$ is the tropical semifield with the initial coefficient tuple $\mathbf{y}=\{y_1,y_2,...,y_n\}$ as a set of generators. In this case, we say that the corresponding cluster algebra is of \emph{geometric type}.
\end{definition}

\subsection{Cluster algebra on surfaces}
\label{sec2.2}

We recall some facts about cluster algebras associated with unpunctured surfaces \cite{fomin2008positivity}. The reader can also refer to \cite{musiker2010cluster,musiker2011positivity,schiffler2010cluster}.\\

\begin{definition}
A \emph{bordered surface with marked points} is a pair $(S, M)$ such that $S$ is a connected oriented two-dimensional Riemann surface with boundary, and the set of \emph{marked points} $M$ is a nonempty finite set in the closure of $S$ with at least one marked point on each boundary component. Marked points in the interior of $S$ are called \emph{punctures}. The surface $(S, M)$ with all marked points lying on the boundary of $S$ are called \emph{unpunctured surface}.
\end{definition}

In the following, we will only consider unpunctured surfaces.

\begin{definition}
 An \emph{arc} $\gamma$ in $(S,M)$ is a curve in $S$, considered up to isotopy, such that
\begin{itemize}
    \item the endpoints of $\gamma$ belongs to $M$;
    \item except for the endpoints, $\gamma$ is disjoint from $M$ and from the boundary of $S$;
    \item $\gamma$ does not cut out an unpunctured 1-gon or an unpunctured 2-gon;
    \item $\gamma$ does not cross itself, except that its endpoints may coincide;
\end{itemize}
A \emph{boundary arc} is a curve that connects two marked points and lies entirely on the boundary of $S$ without passing through a third marked point. Two arcs are \emph{compatible} if they do not cross each other. A \emph{triangulation} $T$ of an unpunctured surface $(S, M)$ is a maximal collection of compatible arcs. 
The number of arcs $n$ in a triangulation of $S$ is called the \emph{rank} of the surface.
\end{definition}

\begin{example}
Figure \ref{fig: triangulation example} shows the surface $S$ of an annulus with a set of marked points $M$ including 1 marked point on the inner circle and 2 marked points on the outer circle. A triangulation is given by $T=\{\tau_1,\tau_2,\tau_3\}$ with the boundary arcs $\tau_4,\tau_5,\tau_6$.
\end{example}

\begin{figure}[H]
\centering
\begin{tikzpicture}[line cap=round,line join=round,>=triangle 45,x=0.7cm,y=0.7cm]
    \draw [line width=1pt] (0,0) circle (2);
    \draw [line width=1pt] (0,0) circle (5);
    \draw plot [smooth, tension=1] coordinates {(0,5) (-4,0) (0,-4) (4,0) (0,5)};
    \draw plot [smooth, tension=1] coordinates {(0,5) (-3,0) (0,-2)};
    \draw plot [smooth, tension=1] coordinates {(0,5) (3,0) (0,-2)};
    
\begin{scriptsize}
    \draw [fill=black] (0,5) circle (2pt);
    \draw [fill=black] (0,-2) circle (2pt);
    \draw [fill=black] (0,-5) circle (2pt);
    \draw[color=black] (0,-3.8) node {$\tau_{1}$};
    \draw[color=black] (-1,3) node {$\tau_{2}$};
    \draw[color=black] (1,3) node {$\tau_{3}$};
    \draw[color=black] (0,1.7) node {$\tau_{4}$};
    \draw[color=black] (-5.3,0) node {$\tau_{5}$};
    \draw[color=black] (5.3,0) node {$\tau_{6}$};

\end{scriptsize}
\end{tikzpicture}
\caption{A triangulation $T=\{\tau_1, \tau_2, \tau_3\}$ and boundary arcs $\tau_4, \tau_5, \tau_6$.}
\label{fig: triangulation example}
\end{figure}
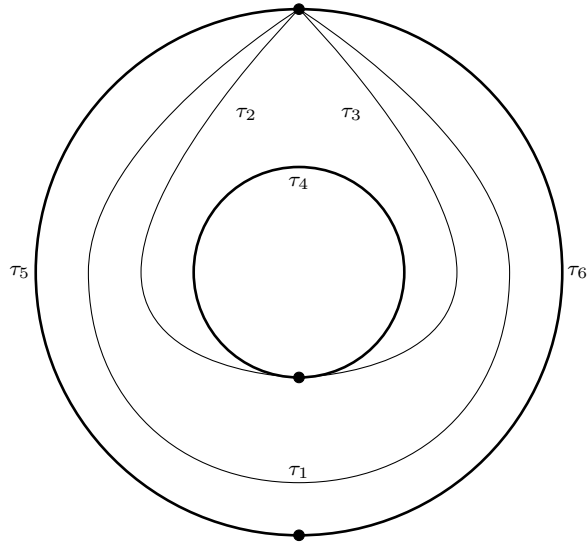

It is well known that any triangulations can be obtained from each other by a finite sequence of \emph{flips}. Each flip replaces a single arc $\gamma$ in $T$ by a unique new arc $\gamma' \neq \gamma$ such that
\begin{equation}
     T'=(T \setminus \{ \gamma\})  \cup \{\gamma'\} 
\end{equation}
is another triangulation (see Figure \ref{fig: flip}).

\begin{figure}[H]
\centering
\begin{tikzpicture}[line cap=round,line join=round,>=triangle 45,x=0.7cm,y=0.7cm]
    \draw plot [smooth] coordinates {(0,4) (0.2,3) (0,0)};
    \draw plot [smooth] coordinates {(0,0) (1,0.2) (4,0)};
    \draw plot [smooth] coordinates {(4,0) (3.8,1) (4,4)};
    \draw plot [smooth] coordinates {(4,4) (3,3.8) (0,4)};
    \draw (0,4) -- (4,0);

    \draw plot [smooth] coordinates {(8,4) (8.2,3) (8,0)};
    \draw plot [smooth] coordinates {(8,0) (9,0.2) (12,0)};
    \draw plot [smooth] coordinates {(12,0) (11.8,1) (12,4)};
    \draw plot [smooth] coordinates {(12,4) (9,3.8) (8,4)};
    \draw (8,0) -- (12,4);

    \draw [->] (5,2) -- (7,2);
    
\begin{scriptsize}
    \draw (6,2.5) node {flip $\gamma$};
    \draw (2.5,2) node {$\gamma$};
    \draw (10.5,2) node {$\gamma'$};
    \draw (-0.2,2) node {$\sigma_1$};
    \draw (4.2,2) node {$\sigma_2$};
    \draw (7.8,2) node {$\sigma_1$};
    \draw (12.2,2) node {$\sigma_2$};

    \draw (2,-0.2) node {$\rho_2$};
    \draw (2,4.2) node {$\rho_1$};
    \draw (10,-0.2) node {$\rho_2$};
    \draw (10,4.2) node {$\rho_1$};

    \draw [fill=black] (0,0) circle (2pt);
    \draw [fill=black] (0,4) circle (2pt);
    \draw [fill=black] (4,0) circle (2pt);
    \draw [fill=black] (4,4) circle (2pt);
    \draw [fill=black] (8,0) circle (2pt);
    \draw [fill=black] (8,4) circle (2pt);
    \draw [fill=black] (12,0) circle (2pt);
    \draw [fill=black] (12,4) circle (2pt);

\end{scriptsize}
\end{tikzpicture}
\caption{A flip.}
\label{fig: flip}
\end{figure}

\begin{remark}
On an unpunctured surface, there always exists a flip for any single arc $\gamma$. Moreover, there is no self-folded triangle in both triangulation $T$ and $T'$.
\end{remark}

We are now ready to define the cluster algebra associated to the surface $(S, M)$. Fix any triangulation $T=\{\tau_1, \tau_2, ..., \tau_n \}$ with $n$ arcs on $(S,M)$, and denote the $m$ boundary arcs of the surface by $\tau_{n+1}, \tau_{n+2},...,\tau_{n+m}$.

\begin{definition}
For any triangle $\bigtriangleup$ in $T$ define a \emph{minor matrix} $B^{\bigtriangleup} = (b_{ij}^{\bigtriangleup})_{i,j=1}^n$ by
\begin{equation}
b_{ij}^{\bigtriangleup}:= \left\{
\begin{array}{lll}
  1   & \text{if $\tau_i$ and $\tau_j$ are sides of $\bigtriangleup$ with $\tau_j$ following $\tau_i$ in the counter-clockwise order}  \\
  -1  & \text{if $\tau_i$ and $\tau_j$ are sides of $\bigtriangleup$ with $\tau_j$ following $\tau_i$ in the clockwise order}  \\
  0   & \text{otherwise}
\end{array}
\right.
\end{equation}
The \emph{exchange matrix} $B_T = (b_{ij})_{i,j=1}^n$ is defined by $b_{ij} = \sum_{\bigtriangleup\in T} b_{ij}^{\bigtriangleup}$, where the sum is taken over all triangles in $T$.
\end{definition}

\begin{definition}
A \emph{cluster algebra on surface} $\mathcal{A}(\mathbf{x}_T, \mathbf{y}_T, B_T)$ is defined by the cluster algebra with principal coefficients for a fixed triangulation $T$.

In other words, $\mathcal{A}(\mathbf{x}_T, \mathbf{y}_T, B_T)$ is given by the initial seed $\Sigma_T=(\mathbf{x}_T, \mathbf{y}_T, B_T)$ where
\begin{itemize}
    \item the initial cluster $\mathbf{x}_T= \{x_{\tau_1}, x_{\tau_2}, ...,x_{\tau_n}\}$ is the cluster associated to the triangulation,
    \item the initial coefficient vector $\mathbf{y}_T = \{y_1, y_2, ..., y_n\}$ is the vector of generators of $\mathbb{P} = \text{Trop}(y_1,y_2,...,y_n)$, and
    \item the exchange graph $B_T$ defined above.
\end{itemize}
\end{definition}

For the boundary arcs we will set $x_{\tau_k} = 1$, for $k=n+1, n+2, ..., n+m$. For each $k = 1, 2, ..., n$, there is a unique quadrilateral in $T \setminus \{\tau_k\}$ in which $\tau_k$ is one of the diagonals. Let $\tau_k'$ denote the other diagonal in that quadrilateral. Define the flip $\mu_k(T)$ to be the triangulation $(T \setminus \{\tau_k\}) \cup \{\tau_k'\}$. We refer to \cite{musiker2010cluster} for the following property.

\begin{proposition}
The mutation $\mu_k$ of the seed $\Sigma_T$ in the cluster algebra $\mathcal{A}$ corresponds to the flip $\mu_k$ of the triangulation $T$ in the following sense:
\begin{itemize}
    \item the matrix $\mu_k(B_T)$ is the matrix corresponding to the triangulation $\mu_k(T)$,
    \item the cluster $\mu_k(\mathbf{x}_T)$ is $(\mathbf{x}_T \setminus \{x_{\tau_k}\}) \cup \{x_{\tau_k}'\}$,
    \item the corresponding exchange relation is given by
    \begin{equation}
         x_{\tau_k}x_{\tau_k}' = x_{\rho_1} x_{\rho_2}y^{+} + x_{\sigma_1}x_{\sigma_2} y^{-}
    \end{equation}
    where $y^{+}, y^{-}$ are some coefficients in $\mathbb{P}$, and $\rho_1, \sigma_1, \rho_2, \sigma_2$ are the sides of the quadrilateral in which $\tau_k$ and $\tau_k'$ are the diagonals, such that $(\rho_1, \rho_2)$ and $(\sigma_1, \sigma_2)$ lie on opposite sides respectively (see Figure \ref{fig: flip}).
\end{itemize}
\end{proposition}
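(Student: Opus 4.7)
The plan is to verify the three bulleted conclusions by a strictly local analysis around $\tau_k$. Since each entry $b_{ik}$ of column $k$ receives contributions only from triangles of $T$ containing $\tau_k$ as a side, and since $\tau_k$ sits as the unique diagonal of a (possibly degenerate) quadrilateral in $T \setminus \{\tau_k\}$ with sides $\rho_1, \sigma_1, \rho_2, \sigma_2$, the entire verification takes place inside that quadrilateral. After flipping, the same four arcs bound the two new triangles containing $\tau_k'$, so $B_{\mu_k(T)}$ differs from $B_T$ only in entries indexed by $k$ and by these four side arcs, and all other entries agree on both sides.

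For the matrix statement, I would first read off from the counter-clockwise convention in the definition of $b_{ij}^{\bigtriangleup}$ that among $b_{\rho_1, k},\, b_{\sigma_1, k},\, b_{\rho_2, k},\, b_{\sigma_2, k}$ exactly two are $+1$ and two are $-1$, with the two side arcs of each $\tau_k$-containing triangle carrying opposite signs. The equality $b_{ij}' = -b_{ij}$ for $i=k$ or $j=k$ then matches the flipped triangulation because reversing the role of the diagonal reverses its cyclic position in each new triangle. For $i,j$ among the four side arcs, I would check directly that the quadratic correction $[-b_{ik}]_+ b_{kj} + b_{ik}[b_{kj}]_+$ records exactly the replacement of the two old triangles $(\tau_k,\cdot,\cdot)$ by the two new ones $(\tau_k',\cdot,\cdot)$ in the sum defining $B_{\mu_k(T)}$.

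For the exchange relation, I would substitute the signs of $b_{ik}$ found above into the general mutation formula
\[
x_{\tau_k}' = \frac{y_k \prod_i x_{\tau_i}^{[b_{ik}]_+} + \prod_i x_{\tau_i}^{[-b_{ik}]_+}}{(y_k \oplus 1)\, x_{\tau_k}},
\]
and observe that after labelling the two arcs with positive $b_{ik}$ as $\rho_1, \rho_2$ and the two with negative $b_{ik}$ as $\sigma_1, \sigma_2$, the two monomials collapse to $x_{\rho_1} x_{\rho_2}$ and $x_{\sigma_1} x_{\sigma_2}$ respectively. Clearing denominators then produces the claimed relation with $y^{+} = y_k / (y_k \oplus 1)$ and $y^{-} = 1 / (y_k \oplus 1)$. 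The convention $x_{\tau_j} = 1$ for boundary arcs handles the cases in which some of $\rho_i, \sigma_j$ are boundary arcs without any change to the bookkeeping.

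The main obstacle is not any single computation but the combinatorial accounting for degenerate quadrilaterals. On the annulus two of $\rho_1, \sigma_1, \rho_2, \sigma_2$ may coincide as arcs of $T$ when the two triangles adjacent to $\tau_k$ share a second side; in such cases the two contributions $b_{ik}^{\bigtriangleup}$ combine to give $b_{ik} = \pm 2$ or cancel to $0$, so a monomial such as $x_{\rho_1} x_{\rho_2}$ may become $x_{\rho}^{\,2}$ or the corresponding exponent may vanish. One must verify separately in each such degeneration that the matrix mutation formula still reproduces $B_{\mu_k(T)}$ and that the exchange relation still factors into the stated product form. Once these degenerate configurations are enumerated and checked, the proposition follows from matching monomials.
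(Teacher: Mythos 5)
The paper does not actually prove this proposition: it is quoted from \cite{musiker2010cluster} (resting ultimately on Fomin--Shapiro--Thurston's structure theory for cluster algebras from surfaces), so there is no internal argument to compare with, and your proposal must be judged on its own. On its own terms, your plan correctly captures the routine half of the statement: the local check that column $k$ of $B_T$ records the sides of the quadrilateral around $\tau_k$ with signs, that $\mu_k(B_T)$ agrees with $B_{\mu_k(T)}$ entry by entry, and that substituting these signs into the mutation formula yields the displayed exchange relation with some $y^{+},y^{-}\in\mathbb{P}$. But both computations that carry this half --- the verification that the correction term $[-b_{ik}]_{+}b_{kj}+b_{ik}[b_{kj}]_{+}$ matches the two new triangles, and the enumeration of degenerate quadrilaterals with coincident sides (which genuinely occur here: for the annulus of Section 3 one has $b_{12}=2$, so entries $\pm 2$ arise already in the paper's main example) --- are only announced (\enquote{I would check directly}, \enquote{one must verify separately}), not performed, so even this half is a plan rather than a proof.

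The more serious gap is the second bullet. In the paper's framework the symbol $x_{\tau_k'}$ is not defined by the exchange relation; it is the cluster variable attached to the arc $\tau_k'$ by the arc-to-variable correspondence of \cite{musiker2010cluster} (equivalently, computable by the expansion formula of Section \ref{sec2.3}). The content of the proposition is precisely that this geometrically labelled element coincides with the algebraic mutation output $x_k'$ produced by $\mu_k$. Your argument implicitly takes the exchange relation as the definition of $x_{\tau_k'}$, which makes the second and third bullets circular and says nothing about the labelled variable. Establishing the identification requires non-local input: the well-definedness of $\gamma\mapsto x_\gamma$ independently of the sequence of flips used to reach a triangulation containing $\gamma$ (connectivity of the flip graph together with compatibility of flips with mutation), or alternatively a proof that the snake-graph expansion formula transforms according to the exchange relation under a flip. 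A purely local analysis of one quadrilateral cannot supply this, so the proposal as written cannot close the proof of the proposition as the paper intends it.
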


\subsection{Expansion formula}
\label{sec2.3}

We recall the expansion formula for the cluster variables in terms
of perfect matchings of a graph that is constructed recursively using \enquote{tiles}. For further details, the reader is referred to \cite{musiker2010cluster}.\\

\begin{definition}
A \emph{tile} $\overline{S}_k$ is a planar four-vertex graph with five weighted edges having the shape of two equilateral triangles that share one edge, oriented as in Figure \ref{fig: tile}. The parallelogram $S_k$ is constructed by removing the diagonal from $\overline{S}_k$.
\end{definition}

Now let $T$ be a triangulation of the unpunctured surface $(S, M)$. If $\tau_k \in T$ is an interior arc, then $\tau_k$ lies in precisely two triangles in $T$, hence $\tau_k$ is the diagonal of a unique quadrilateral $Q_{\tau_k}$ in $T$ with 4 edges $\tau_a,\tau_b,\tau_c,\tau_d$. We associate to this quadrilateral a tile $\overline{S}_k$ by assigning the weight $x_k$ (as a variable) to the diagonal and the weights $x_a,x_b,x_c,x_d$ to the sides of $\overline{S}_k$ (see Figure \ref{fig: tile}).\\

\begin{figure}[H] 
\centering
\begin{tikzpicture}[line cap=round,line join=round,>=triangle 45,x=1cm,y=1cm]
    \draw (0,0) -- (2,0);
    \draw (0,0) -- (1,2);
    \draw (2,0) -- (1,2);
    \draw (1,2) -- (3,2);
    \draw (2,0) -- (3,2);
    
\begin{scriptsize}
    \draw (1,-0.2) node {$x_a$};
    \draw (2.7,1) node {$x_b$};
    \draw (2,2.2) node {$x_c$};
    \draw (0.3,1) node {$x_d$};
    \draw (1.7,1) node {$x_k$};

    \draw [fill=black] (0,0) circle (2pt);
    \draw [fill=black] (2,0) circle (2pt);
    \draw [fill=black] (1,2) circle (2pt);
    \draw [fill=black] (3,2) circle (2pt);

\end{scriptsize}
\end{tikzpicture}
\caption{The tile $\overline{S}_k$}
\label{fig: tile}
\end{figure}
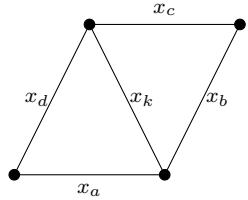

Let $\gamma\notin T$ be an arbitrary arc joining two endpoints $s,t$. Choose an orientation of $\gamma$ and assume $\gamma$ cuts the arcs in $T$ at the intersections (allowing repeats)
\begin{equation}
s:=p_0, p_1,p_2,...,p_d, p_{d+1}:=t,
\end{equation}
let $i_1,i_2,...,i_d$ be the indices such that $p_k$ lies on the arc $\tau_{i_k} \in T$. For $0\leq k\leq d$, let $\gamma^k$ denote the segment of the path $\gamma$ from the point $p_k$ to the point $p_{k+1}$. Each $\gamma^k$ lies in exactly one triangle $\Delta_k$ in $T$. For $1\leq k\leq d-1$, $\Delta_k$ is formed by the arcs $\tau_{i_k},\tau_{i_{k+1}}$ and a third arc that we denote by $\tau_{[\gamma^k]}$ for some index $[\gamma^k]\in\{1,2,...,n+m\}$.

For $1\leq k\leq d-1$, we glue tiles $\overline{S}_{i_k}$ and $\overline{S}_{i_{k+1}}$ along the common edge $\tau_{[\gamma^k]}$ such that the triangle $\Delta_k=\{\tau_{i_k},\tau_{i_{k+1}}, \tau_{[\gamma^k]}\}$ is oriented differently in the two tiles (see Figure \ref{fig: glue}).\\

\begin{figure}[H]
\centering
\begin{tikzpicture}[line cap=round,line join=round,>=triangle 45,x=1cm,y=1cm]
    \draw[blue] (0,0) -- (2,0);
    \draw[blue] (1,1.98) -- (3,1.98);
    \draw[red] (1,2) -- (3,2);
    \draw[red] (2,4) -- (4,4);
    \draw[blue] (0,0) -- (1,2);
    \draw[red] (1,2) -- (2,4);
    \draw[blue] (2,0) -- (3,2);
        \draw[red] (3,2) -- (4,4);
    \draw[blue] (1,2) -- (2,0);
    \draw[red] (2,4) -- (3,2);

\begin{scriptsize}
    \draw (2,2.2) node {$x_{[\gamma^k]}$};
    \draw (1.3,1) node {$x_{i_{k}}$};
    \draw (1.3,3) node {$x_{i_{k}}$};
    \draw (2.9,1) node {$x_{i_{k+1}}$};
    \draw (2.9,3) node {$x_{i_{k+1}}$};

    \draw [fill=black] (0,0) circle (2pt);
    \draw [fill=black] (2,0) circle (2pt);
    \draw [fill=black] (1,2) circle (2pt);
    \draw [fill=black] (3,2) circle (2pt);
    \draw [fill=black] (2,4) circle (2pt);
    \draw [fill=black] (4,4) circle (2pt);

\end{scriptsize}
\end{tikzpicture}
\caption{Gluing tiles $\overline{S}_{i_k}$ and $\overline{S}_{i_{k+1}}$ along the edge weighted $x_{[\gamma^k]}$}
\label{fig: glue}
\end{figure}
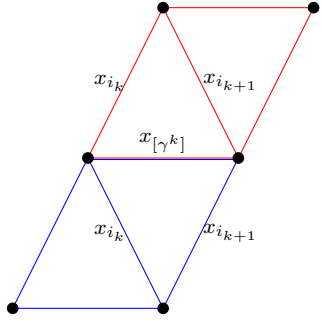

\begin{definition}
The \emph{full graph} $\overline{G}_{T, \gamma}$ is constructed by gluing tiles $\overline{S}_{{i_1}}, \overline{S}_{{i_2}},...,\overline{S}_{{i_d}}$ together with the procedure above, 
such that the first tile have the orientation induced from the surface, and the starting point $p_0$ of $\gamma$ lies in the southwest corner of the first tile. The \emph{snake graph} $G_{T, \gamma}$ is obtained from the full graph by removing the diagonal $\tau_{i_k}$ of each tile $\overline{S}_{{i_k}}$.
\end{definition}

\begin{definition}
A \emph{perfect matching} $P$ of a graph is a subset of the edges so that each vertex belongs to exactly one edge in $P$. The \emph{weight} $w(P)$ of a perfect matching $P$ of $G_{T, \gamma}$ is the product of all the weights assigned to the edges in $P$.
\end{definition}

There are precisely two perfect matchings $P_{+}$ and $P_{-}$ of $G_{T, \gamma}$ that contain only the boundary edges of $G_{T, \gamma}$. The matching $P_{-}$ contains the western (left-most) edge of the bottom-most tile on $G_{T, \gamma}$, while $P_{+}$ contains the southern (bottom) edge. 

\begin{definition}
The \emph{symmetric difference} of a perfect matching $P$ is a subgraph $G_P$ of $G_{T, \gamma}$ given by
\begin{equation}
    G_P:= P_- \ominus P = (P_- \cup P)\setminus (P_- \cap P)
\end{equation}
\end{definition}

Given an unpunctured surface $(S,M)$ and a triangulation $T=\{\tau_1, \tau_2,...,\tau_n\}$, we consider the cluster algebra $\mathcal{A}:=\mathcal{A}(\mathbf{x}_T, \mathbf{y}_T, B_T)$ as defined in Section \ref{sec2.2}. Each arc $\gamma$ in $(S,M)$ corresponds to a cluster variable $x_{\gamma}$ in $\mathcal{A}$, with the arcs in $T$ corresponding to the initial cluster $\mathbf{x}_T$. In the following we will denote the initial cluster variables by $x_k:=x_{\tau_k}$.\\

We can now state the following theorems \cite{musiker2010cluster} about cluster expansion.

\begin{theorem} \label{thm: y formula}
The set $P_- \ominus P$ is the set of boundary edges of a (possibly disconnected) subgraph $G_P$ of $G_{T,\gamma}$ which is a union of tiles
\begin{equation}
    G_P= \bigcup_{j \in J} S_j.
\end{equation}
\end{theorem}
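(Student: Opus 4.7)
The plan is to proceed via general planarity considerations, broken into three stages.

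First, I would observe that because $P_-$ and $P$ are both perfect matchings of $G_{T,\gamma}$, every vertex is incident to exactly one edge from each. Therefore in the symmetric difference $P_-\ominus P$, each vertex has degree $0$ when the two matching edges coincide, and degree $2$ when they differ. The edges of $P_-\ominus P$ thus form a disjoint union of simple cycles in $G_{T,\gamma}$; since along each cycle the edges alternate between $P_-$ and $P$, all such cycles have even length.

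Second, I would exploit the planarity of the snake graph. By construction, $G_{T,\gamma}$ is planar and its bounded faces in the natural embedding are exactly the parallelogram tiles $S_{i_1},\ldots,S_{i_d}$ (the diagonals having been removed from the full graph). By the Jordan curve theorem, every simple cycle in a planar graph encloses a region that is a union of bounded faces. Applying this to each cycle component of $P_-\ominus P$ produces an index set $J \subseteq \{i_1,\ldots,i_d\}$ for which the cycles collectively bound the subgraph $\bigcup_{j \in J} S_j$, yielding the inclusion of $P_-\ominus P$ into the set of topological boundary edges of this tile union.

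Third, I would verify the reverse inclusion by local analysis. A candidate boundary edge $e$ of $\bigcup_{j \in J} S_j$ separates a tile $S_j$ with $j \in J$ from either the exterior of the snake graph or from an excluded tile $S_{j'}$ with $j' \notin J$; in either case, a short case check of how $P_-$ and $P$ can restrict to a single tile forces $e$ to appear in exactly one of $P_-$ and $P$, hence in $P_-\ominus P$.

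The main obstacle, I expect, is step three: tracking precisely which edges of $G_{T,\gamma}$ lie on the enclosing cycles of $P_-\ominus P$ versus which lie in the interior of the enclosed region. This should be dispatched cleanly by enumerating the small list of possible local configurations of $P_-$ and $P$ on each tile $\overline{S}_{i_k}$ and verifying the compatibility across each shared edge, or alternatively by induction on the number $d$ of tiles in the snake graph using the recursive gluing construction.
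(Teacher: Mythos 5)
You should first note that the paper does not prove this statement at all: it is imported verbatim from \cite{musiker2010cluster} (the sentence preceding it reads ``We can now state the following theorems \cite{musiker2010cluster} about cluster expansion''), so there is no internal proof to compare against; the relevant benchmark is the standard argument in that reference, and your outline follows exactly that route — the symmetric difference of two perfect matchings is a disjoint union of alternating cycles, and in the planar snake graph, whose bounded faces are precisely the tiles, each such cycle bounds a union of tiles. That core is correct.

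Two points need tightening before this is a proof. (a) Step 2, as stated, only gives that each individual cycle $C$ is the boundary of the tile-union it encloses (the Jordan-curve fact that the topological boundary of the enclosed region is $C$ itself). To conclude that $P_-\ominus P$ equals the boundary edge set of the total union $\bigcup_{j\in J}S_j$ you must also exclude two global configurations: two cycles whose enclosed tile-sets share an edge (that edge would then be interior to the union, not a boundary edge), and one cycle nested inside the region enclosed by another (its edges would likewise be interior to the union). The first is impossible because a common boundary edge would lie on both cycles, contradicting their vertex-disjointness; the second is impossible in a snake graph because every tile has at most two neighbouring tiles, so any cycle necessarily uses edges lying on the outer boundary of $G_{T,\gamma}$, and such edges cannot sit strictly inside another cycle's region. (b) Step 3 as phrased does not work as a purely local argument: the restriction of $P_-$ and of $P$ to a single tile (two opposite sides, one side, or none) does not by itself decide whether a given side of that tile belongs to $P_-\ominus P$; membership is a global property of the alternating cycle, so ``a short case check on one tile'' cannot force $e\in P_-\ominus P$. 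The good news is that once (a) is settled no separate reverse inclusion is needed — the boundary of $\bigcup_{j\in J}S_j$ is then literally the union of the cycles — or alternatively your fallback of induction on the number $d$ of tiles along the snake graph goes through and is closer to how such statements are usually handled in the snake-graph literature.
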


Define a monomial in $y_T$ by
\begin{equation} \label{eq: y(P)}
    y(P):= \prod_{j \in J} y_{i_j}.
\end{equation}

\begin{theorem} \label{thm: x formula}
Any cluster variable can be expressed as a Laurent polynomial of the initial variables by
\begin{equation}
    x_{\gamma}= \sum_{P} \frac{w(P) y(P)}{x_{i_1}x_{i_2}...x_{i_d}}
\end{equation}
where the sum is over all perfect matchings $P$ of $G_{T, \gamma}$, $w(P)$ is the weight of $P$, and $y(P)$ is defined as in (\ref{eq: y(P)}).\\
\end{theorem}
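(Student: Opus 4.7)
The plan is to prove the formula by induction on $d$, the number of crossings of $\gamma$ with arcs of the triangulation $T$. The base case is $d=1$: the arc $\gamma$ crosses a single $\tau_k \in T$, so $\gamma$ is exactly the flipped arc $\tau_k'$ in the quadrilateral $Q_{\tau_k}$ with sides $\tau_a,\tau_b,\tau_c,\tau_d$. The snake graph $G_{T,\gamma}$ is then a single square (the parallelogram $S_k$) with edge weights $x_a,x_b,x_c,x_d$, and it admits exactly two perfect matchings $P_-$ and $P_+$. One checks directly that the corresponding weights are $x_ax_c$ and $x_bx_d$, and that the $y$-monomials contributed by $P_-$ and $P_+$ are $1$ and $y_k$ respectively (via Theorem \ref{thm: y formula}, since $P_-\ominus P_+ = S_k$). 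Dividing by $x_k$, the sum reproduces the exchange relation from the proposition on flips, and hence equals $x_{\tau_k'} = x_\gamma$.

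For the inductive step, I would fix $d\geq 2$ and assume the formula for all arcs crossing $T$ fewer than $d$ times. The idea is to flip at the last arc crossed, $\tau_{i_d}$, obtaining $T' = \mu_{i_d}(T)$; with respect to $T'$, the arc $\gamma$ crosses fewer arcs (either $d-1$, or possibly more depending on the local configuration, in which case one flips at $\tau_{i_1}$ or chooses a different terminal tile to reduce cleanly). By induction, $x_\gamma$ admits a perfect-matching expansion on the snake graph $G_{T',\gamma}$ in terms of $\mathbf{x}_{T'}$. Substituting the exchange relation for $x_{\tau_{i_d}'}$ back into this expression rewrites $x_\gamma$ as a Laurent polynomial in $\mathbf{x}_T$, and what remains is to match this algebraic reduction with a combinatorial reduction of $G_{T,\gamma}$.

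The key combinatorial step is to stratify the perfect matchings $P$ of $G_{T,\gamma}$ according to how $P$ restricts to the terminal tile $S_{i_d}$. This stratification yields a weight-preserving bijection between matchings of $G_{T,\gamma}$ and a disjoint union of matchings on two smaller snake graphs, corresponding respectively to the two monomials $x_{\rho_1}x_{\rho_2}y^+$ and $x_{\sigma_1}x_{\sigma_2}y^-$ of the exchange relation. The principal-coefficient transformation law for $\mathbf{y}$ in the flip must be carefully tracked against Theorem \ref{thm: y formula}, since each choice of restriction at the terminal tile either enlarges or preserves the tile-subgraph $G_P = P_-\ominus P$, and therefore multiplies $y(P)$ by an appropriate power of $y_{i_d}$.

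The hard part is precisely this bookkeeping: verifying that the local replacement of matchings at the grafted tile, together with the change-of-denominator from $x_{i_1}\cdots x_{i_{d-1}}$ to $x_{i_1}\cdots x_{i_d}$, reproduces exactly the algebraic effect of applying the exchange relation. Once one sets up a clean decomposition at the terminal tile and verifies the three simultaneous identities (weights $w(P)$, coefficient monomials $y(P)$, and denominators) match on both sides, the induction closes and the cluster expansion formula follows for every arc $\gamma$. Uniqueness of the expansion as a Laurent polynomial in $\mathbf{x}_T$ (guaranteed by the Laurent phenomenon cited in the introduction) ensures that the sum indeed equals the cluster variable $x_\gamma$ and not merely a polynomial satisfying the same recurrence.
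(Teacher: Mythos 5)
You should first note that the paper does not prove Theorem \ref{thm: x formula} at all: it is recalled from \cite{musiker2010cluster} as background, so there is no internal proof to compare against, and your attempt has to be judged on its own. Your base case $d=1$ is correct (one tile, two matchings, weights $x_ax_c$ and $x_bx_d$, $y$-monomials $1$ and $y_{i_1}$, matching the exchange relation after dividing by $x_{i_1}$). The problem is that the inductive step, which is the entire content of the theorem, is asserted rather than carried out, and as sketched it contains genuine gaps rather than mere omitted routine detail.

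Concretely: (i) the reduction is not justified --- flipping the last crossed arc $\tau_{i_d}$ need not decrease the number of crossings of $\gamma$ with the new triangulation (you concede this with ``or possibly more depending on the local configuration''), and no lemma is offered guaranteeing that some flip always achieves a clean reduction, so the induction does not get off the ground; (ii) the induction hypothesis applied to $T'=\mu_{i_d}(T)$ yields an expansion in the principal coefficients attached to $T'$, which live in a different tropical semifield than the principal coefficients attached to $T$; passing from one to the other is not accomplished by substituting the single exchange relation for $x_{\tau_{i_d}}'$, but requires tracking the full coefficient mutation (or a separation-of-additions argument in the style of Fomin--Zelevinsky), and this is exactly the bookkeeping you explicitly defer; (iii) the claimed weight- and $y$-monomial-preserving bijection obtained by stratifying matchings at the terminal tile is never exhibited, and it tacitly assumes that $G_{T',\gamma}$ differs from $G_{T,\gamma}$ only at that tile, which is false in general since changing the triangulation changes the tiles globally. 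Naming the hard step (``the hard part is precisely this bookkeeping'') without performing it leaves this an outline of the known strategy, not a proof; to make it work you would need the crossing-reduction lemma, the coefficient-conversion argument, and the explicit local matching decomposition, which together constitute essentially the whole proof in \cite{musiker2010cluster}.
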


\subsection{Maximal Dyck path and compatible pairs}
\label{sec2.4}

Any rank $2$ cluster algebra of finite or affine type has a $\mathbb{Z}$-basis that includes elements called \emph{indecomposable positives} \cite{sherman2003positivity}. A special family called \emph{greedy elements} is first introduced in \cite{lee2014greedy}. These elements are proven to be indecomposable positives, hence form a $\mathbb{Z}$-basis of the cluster algebra. There are several expressions of greedy elements, but this subsection only introduces the one mentioned in \cite{lee2014greedy}.\\

In the following, a rank $2$ cluster algebra is of the form $\mathcal{A}:=\mathcal{A}(b,c)$ defined by the exchange matrix $B=\begin{pmatrix}0&b\\-c&0\end{pmatrix}$, where $b,c\in \mathbb{N}$. It is generated by the initial cluster $\{x_1,x_2\}$ and the exchange relations for $n \in \mathbb{Z}$
\begin{equation}
    x_{n-1}x_{n+1}=
\left\{
\begin{array}{ll}
    x_n^b +1   & \text{for $n$ odd},\\
    x_n^c  +1  & \text{for $n$ even}.\\
\end{array}
\right.
\end{equation}
To express greedy elements in $\mathcal{A}(b,c)$, we need some terminologies.

\begin{definition} 
For $a_1,a_2 \in \mathbb{Z}_{\geq 0}$, a \emph{Dyck path} of dimension $a_1 \times a_2$ is a lattice path going from $(0,0)$ to $(a_1,a_2)$ that just go up or to the right, but is never higher than the straight line joining $(0, 0)$ and $(a_1,a_2)$. A Dyck path is \emph{maximal} if it is not lower than any other Dyck path.
\end{definition}

In the dimension $a_1 \times a_2$, the maximal Dyck path is unique, denoted by $\mathcal{D} = \mathcal{D}^{a_1 \times a_2}$. Let $\mathcal{D}_1 = \{ u_1, . . ., u_{a_1} \}$ be the set of horizontal edges of $\mathcal{D}$ indexed from left to right, and $\mathcal{D}_2 = \{ v_1, . . ., v_{a_2} \}$ the set of vertical edges of $\mathcal{D}$ indexed from bottom to top.\\

In the following, it is convenient to regard $(0, 0)$ and $(a_1, a_2)$ as the same point.

\begin{definition}
For any points $A$ and $B$ on $\mathcal{D}$, the \emph{subpath $AB$} is the path of $\mathcal{D}$ starting from $A$ and going in the Northeast direction until it reaches $B$ (if we reach $(a_1, a_2)$ first, we loop back to $(0, 0)$).
\end{definition}

\begin{remark}
In the special case $A = B$, define the subpath $AA$ to be the path that starts from A until it reaches $(a_1, a_2)$, loops back to $(0,0)$, and then ends at $A$.
\end{remark}

We denote by $(AB)_1$ the set of horizontal edges in $AB$ and by $(AB)_2$ the set of vertical edges in $AB$. Also, let $AB^{\circ}$ denote the set of lattice points on the subpath $AB$ excluding the endpoints $A$ and $B$.

\begin{example}
Figure \ref{fig: dyck} shows the maximal Dyck path $\mathcal{D}^{6 \times 4}$, where $\mathcal{D}_1=\{u_1,u_2,u_3,u_4,u_5,u_6\}$ and $\mathcal{D}_2=\{v_1,v_2,v_3,v_4\}$. We choose $A=(5,2), B=(2,1)$, with subpath $AB=(v_3,u_6,v_4,u_1,u_2,v_1)$ and the set of lattice points $AB^{\circ}=\{(5,3), (6,3), (6,4)=(0,0), (1,0), (2,0)\}$.
\end{example}

\begin{figure}[H]
\centering
\begin{tikzpicture}[line cap=round,line join=round,>=triangle 45,x=1.5cm,y=1.5cm]
    \draw (0,0) -- (6,0);
    \draw (0,1) -- (6,1);
    \draw (0,2) -- (6,2);
    \draw (0,3) -- (6,3);
    \draw (0,4) -- (6,4);

    \draw (0,0) -- (0,4);
    \draw (1,0) -- (1,4);
    \draw (2,0) -- (2,4);
    \draw (3,0) -- (3,4);
    \draw (4,0) -- (4,4);
    \draw (5,0) -- (5,4);
    \draw (6,0) -- (6,4);

    \draw [color = blue] (0,0) -- (6,4);

    \draw [line width=4pt] (0,0) -- (2,0);
    \draw [line width=4pt] (2,0) -- (2,1);
    \draw [line width=4pt] (2,1) -- (3,1);
    \draw [line width=4pt] (3,1) -- (3,2);
    \draw [line width=4pt] (3,2) -- (5,2);
    \draw [line width=4pt] (5,2) -- (5,3);
    \draw [line width=4pt] (5,3) -- (6,3);
    \draw [line width=4pt] (6,3) -- (6,4);
        
\begin{scriptsize}
    \draw (0.5,0.2) node {$u_1$};
    \draw (1.5,0.2) node {$u_2$};
    \draw (2.5,1.2) node {$u_3$};
    \draw (3.5,2.2) node {$u_4$};
    \draw (4.5,2.2) node {$u_5$};
    \draw (5.5,3.2) node {$u_6$};

    \draw (2.2,0.5) node {$v_1$};
    \draw (3.2,1.5) node {$v_2$};
    \draw (5.2,2.5) node {$v_3$};
    \draw (6.2,3.5) node {$v_4$};

    \draw (5.2,1.8) node {\large $A$};
    \draw (1.8,1.2) node {\large $B$};
    
\end{scriptsize}
\end{tikzpicture}
\caption{The maximal Dyck path of $\mathcal{D}^{6 \times 4}$}
\label{fig: max dyck path example}
\end{figure}
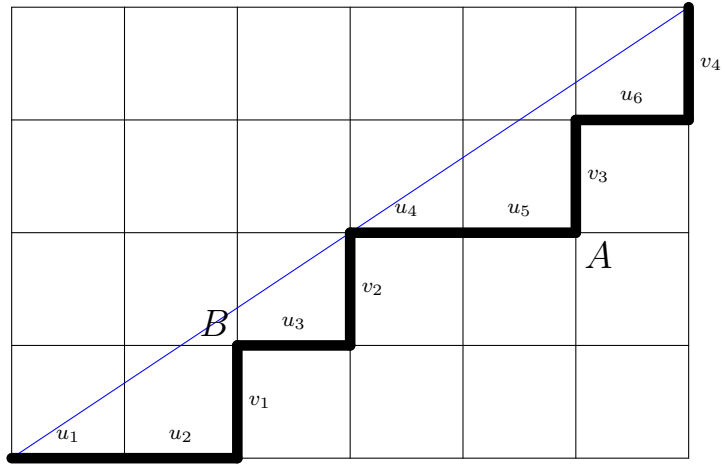

\begin{definition}\label{compatibledef}
A pair $(S_1,S_2)$ with $S_1 \subset \mathcal{D}_1$, $S_2 \subset \mathcal{D}_2$ is called \emph{compatible} if for every $u \in S_1, v \in S_2$, there exists $A \in EF^{\circ}$ ,where $E$ is the left endpoint of $u$ and $F$ is the upper endpoint of $v$, such that:
\begin{equation}
    |(AF)_1|=b|(AF)_2 \cap S_2| \text{ or } |(EA)_2|=c|(EA)_1 \cap S_1|
\end{equation}
\end{definition}

Now, we recall the expression of greedy elements mentioned in \cite{lee2014greedy}.

\begin{theorem}
For any $a_1,a_2 \in \mathbb{Z}$, the \emph{greedy element} $x[a_1,a_2] \in \mathcal{A}(b,c)$ is defined by
\begin{equation}
    x[a_1,a_2]:=x_1^{-a_1}x_2^{-a_2} \sum_{(S_1,S_2)} x_1^{b|S_2|}x_2^{c|S_1|}.
\end{equation}
where the sum is over all compatible pairs $(S_1,S_2)$ in $\mathcal{D}^{{[a_1]_+} \times {[a_2]_+}}$.
\end{theorem}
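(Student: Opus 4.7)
The plan is to verify that the Laurent polynomial given by the combinatorial sum over compatible pairs on $\mathcal{D}^{[a_1]_+ \times [a_2]_+}$ coincides with the greedy element $x[a_1,a_2]$ as originally defined in \cite{lee2014greedy}. Recall that the greedy element is characterized as the unique element of $\mathcal{A}(b,c)$ of the form
\[
x[a_1,a_2] = x_1^{-a_1}x_2^{-a_2} \sum_{p,q \geq 0} c(p,q)\, x_1^{bp} x_2^{cq}
\]
with $c(0,0) = 1$ and whose coefficients $c(p,q)$ for $(p,q)\neq(0,0)$ are determined by a specific minimum-of-alternating-sums recursion (the \emph{greedy recursion}) involving lower-order coefficients. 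So the task reduces to showing that setting
\[
c(p,q) := \#\{\text{compatible pairs } (S_1,S_2) \subset \mathcal{D}_1 \times \mathcal{D}_2 : |S_2|=p,\ |S_1|=q\}
\]
satisfies this recursion with the stated initial condition.

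First, I would reformulate the compatibility condition of Definition \ref{compatibledef} in a more tractable local form. The quantified condition involving the auxiliary point $A$ on the subpath $EF$ can be rephrased as saying that any potential "violating" pair $(u,v)\in S_1\times S_2$ admits a certificate in the form of a horizontal or vertical run along the Dyck path whose edge counts align with the ratios $b$ or $c$. Partitioning compatible pairs according to the leftmost such certificate (and whether it is of horizontal or vertical type) yields a decomposition of the enumerating function that is amenable to inclusion-exclusion, and that matches the shape of the greedy recursion term by term.

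Then I would establish the greedy recursion itself by constructing explicit sign-reversing involutions on the alternating sums appearing on its right-hand side. Each involution toggles membership of a single edge in $S_1$ or $S_2$ at a canonically chosen "first violation site" dictated by the staircase geometry of $\mathcal{D}^{[a_1]_+ \times [a_2]_+}$, and these toggles pair off contributions with opposite signs, leaving only the compatible pairs that survive to contribute to $c(p,q)$. Combining the two one-sided involutions with the minimum operation in the greedy recursion reproduces the defining formula, and the initial condition $c(0,0)=1$ is immediate from the unique empty pair $(\emptyset,\emptyset)$.

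The main obstacle is verifying that these involutions actually preserve compatibility. Because compatibility is a quantified existential statement, toggling a single edge may, in principle, disturb witnesses $A$ for distant pairs $(u',v')$ elsewhere on the Dyck path, and controlling such nonlocal side effects is the delicate technical core of the argument. This is where the rigid staircase structure of $\mathcal{D}^{[a_1]_+ \times [a_2]_+}$ must be used essentially: the witness point $A$ is always forced to lie in a narrow corridor near the diagonal, which localizes the effect of a toggle to a bounded neighborhood of the first-violation site and lets the involution be defined consistently. Once this local-to-global control is in place, the recursion follows and, by uniqueness of the greedy element, the combinatorial formula agrees with $x[a_1,a_2]\in \mathcal{A}(b,c)$.
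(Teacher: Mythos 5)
You cannot be checked against ``the paper's own proof'' here, because the paper does not prove this statement: it is recalled verbatim from \cite{lee2014greedy}, where the greedy element $x[a_1,a_2]$ is \emph{defined} by the greedy recursion on the coefficients $c(p,q)$ and the compatible-pair formula on $\mathcal{D}^{[a_1]_+\times[a_2]_+}$ is the main combinatorial theorem (proved there at length). Your reduction is the correct one and matches the strategy of the original source: set $c(p,q)$ equal to the number of compatible pairs $(S_1,S_2)$ with $|S_2|=p$, $|S_1|=q$, check $c(0,0)=1$, and show these counts satisfy the defining recursion, then invoke uniqueness of the greedy element. One small but substantive slip: the greedy recursion takes the \emph{maximum} of the two alternating sums (one in each direction), not a minimum, and the orientation of that extremum is exactly what any sign-reversing involution argument must respect.

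The genuine gap is that the technical core of your plan is named but never executed. The sign-reversing involutions are not defined, the ``first violation site'' is not specified, and the claim that compatibility witnesses are ``forced to lie in a narrow corridor near the diagonal'' is asserted without proof and is not true in general: by Definition \ref{compatibledef} the witness $A$ ranges over all of $EF^{\circ}$, and since $(0,0)$ and $(a_1,a_2)$ are identified, the subpath $EF$ can wind through essentially the entire path, so a single-edge toggle can indeed destroy witnesses for distant pairs $(u',v')$. This nonlocality is precisely why the proof in \cite{lee2014greedy} is long and delicate, proceeding through a detailed structural analysis of compatible pairs rather than a one-step toggle involution; your proposal acknowledges the obstacle but offers no mechanism that overcomes it. As it stands, the argument is a plausible research plan whose decisive step --- verifying that the proposed involutions preserve compatibility and reproduce the max-of-alternating-sums recursion --- is missing, so the proof is incomplete.
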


\begin{remark}
If $a_1 \leq 0$ and $a_2 \leq 0$, then by the definition, \Eq{x[a_1,a_2]=x_1^{-a_1}x_2^{-a_2}.}
\end{remark}

\begin{definition}
For any $p \in \mathbb{Z}$, we denote the \emph{standard permutation} for cluster variable
\begin{equation}
    \sigma_p(x_n)=x_{2p-n}.
\end{equation}
\end{definition}

We recall some propositions in \cite{lee2014greedy}.

\begin{proposition}
The set of all greedy elements \Eq{\mathcal{B}:=\{x[a_1,a_2]: a_1, a_2 \in \mathbb{Z}\}} forms a $\mathbb{Z}$-basis of $\mathcal{A}(b,c)$. It is called the \emph{greedy basis}.
\end{proposition}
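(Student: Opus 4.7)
The plan is to decompose the claim into two parts: linear independence of $\mathcal{B}$ over $\mathbb{Z}$, and the assertion that $\mathcal{B}$ spans $\mathcal{A}(b,c)$.

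First I would establish linear independence via a \emph{pointedness} argument. The empty pair $(S_1,S_2)=(\emptyset,\emptyset)$ is vacuously compatible for every $(a_1,a_2)$, contributing the monomial $x_1^{-a_1} x_2^{-a_2}$ to $x[a_1,a_2]$; every other compatible pair contributes a monomial $x_1^{b|S_2|-a_1} x_2^{c|S_1|-a_2}$ whose exponent vector strictly dominates $(-a_1,-a_2)$ coordinatewise. Thus each $x[a_1,a_2]$ is pointed at $(-a_1,-a_2)$, and distinct parameters yield distinct pointed monomials. A putative $\mathbb{Z}$-linear dependence $\sum c_{a_1,a_2}\, x[a_1,a_2]=0$ then picks out a minimal pointed monomial with respect to the componentwise order which cannot cancel with contributions from any other summand, forcing all $c_{a_1,a_2}=0$.

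For spanning, I would invoke Sherman and Zelevinsky's classification \cite{sherman2003positivity}: in every rank $2$ cluster algebra of finite or affine type, the \emph{indecomposable positive} elements form a $\mathbb{Z}$-basis of $\mathcal{A}(b,c)$. It thus suffices to show that each $x[a_1,a_2]$ is indecomposable positive. Positivity is immediate from the formula since each compatible pair contributes a monomial with coefficient $+1$. For indecomposability, I would suppose $x[a_1,a_2]=P+Q$ with $P,Q$ nonzero positive Laurent polynomials in $\mathcal{A}(b,c)$; the unique pointed monomial $x_1^{-a_1}x_2^{-a_2}$ then appears in exactly one summand, say $P$, while $Q$ is a positive element whose Newton polytope sits strictly above $(-a_1,-a_2)$, and I would argue that any such positive element must be a $\mathbb{Z}_{\geq 0}$-combination of greedy elements pointed at strictly larger vectors, forcing $Q=0$ by comparing with the explicit formula.

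The main obstacle is precisely this last minimality step. In the affine case $b=c=2$ relevant to this paper, one must control the interaction of $x[a_1,a_2]$ not only with the infinitely many cluster variables but also with the \emph{imaginary} elements corresponding to the annulus's non-separating homology. I would address this by invoking the recursive characterization of greedy elements from \cite{lee2014greedy} — namely, that $x[a_1,a_2]$ is the unique pointed Laurent polynomial in $\mathcal{A}(b,c)$ whose non-pointed support satisfies a prescribed extremal vanishing condition — and combining it with Sherman--Zelevinsky's explicit classification of indecomposable positives in affine type. This bridge between the combinatorial data of compatible pairs and the algebraic data of pointedness-plus-indecomposability is the crux, after which linear independence and spanning assemble into the basis property.
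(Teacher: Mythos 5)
First, a contextual note: the paper does not prove this proposition at all --- it is recalled verbatim from \cite{lee2014greedy} (with the surrounding prose indicating the route ``greedy elements are indecomposable positives, hence a basis by \cite{sherman2003positivity} in finite/affine type''), so there is no internal proof to compare against. Your linear-independence half is essentially the standard and correct argument: each $x[a_1,a_2]$ is pointed at $(-a_1,-a_2)$, though note the domination is weak in each coordinate and strict in at least one (e.g.\ a pair with $S_1=\emptyset$, $S_2\neq\emptyset$ leaves the $x_2$-exponent at $-a_2$), which is still enough to run the minimal-monomial argument.

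The spanning half, however, has genuine gaps. (i) You never address why $x[a_1,a_2]$, defined by the compatible-pair formula as an element of $\mathbb{Z}[x_1^{\pm 1},x_2^{\pm 1}]$, lies in $\mathcal{A}(b,c)$ at all; this is a nontrivial part of \cite{lee2014greedy} and cannot be skipped. (ii) Your indecomposability step is circular as written: to force $Q=0$ you assert that any positive element pointed strictly above $(-a_1,-a_2)$ is a $\mathbb{Z}_{\geq 0}$-combination of greedy elements, which is (essentially) the spanning statement you are trying to prove. (iii) Even granting that every $x[a_1,a_2]$ is indecomposable positive, this only yields $\mathcal{B}\subseteq\{\text{indecomposable positives}\}$; to conclude $\mathcal{B}$ \emph{is} the Sherman--Zelevinsky basis you still need an exhaustion argument (e.g.\ that the basis elements are indexed by their denominator/pointed vectors ranging over all of $\mathbb{Z}^2$, and the greedy elements realize every such vector), which you do not supply. (iv) Finally, the Sherman--Zelevinsky route is only available for $bc\leq 4$; it suffices for the $\mathcal{A}(2,2)$ case the paper cares about, but the proposition is stated for arbitrary $b,c\in\mathbb{N}$, and the actual proof in \cite{lee2014greedy} proceeds differently (directly, via upper-bound/Laurent arguments and recursions on the coefficients) precisely to cover the general case. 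So your outline reproduces the citation skeleton rather than a proof, and the crux you identify is exactly where the argument is still missing.
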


\begin{proposition} \label{prop: sigma formula}
The greedy basis $\mathcal{B}$ is invariant under the action of any $\sigma_p$, $p\in\mathbb{Z}$. Specifically, $\sigma_p$ can be written as a composition of some automorphisms $\sigma_1$ and $\sigma_2$, where they act on the greedy elements as:
\Eq{
     \sigma_1(x[a_1,a_2])&:=x[a_1, c[a_1]_{+} -a_2],\\
    \sigma_2(x[a_1,a_2])&:=x[b[a_2]_{+} -a_1,a_2].
}
\end{proposition}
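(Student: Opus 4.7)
The plan is to exploit the characterization of greedy elements as the unique elements of $\mathcal{A}(b,c)$ having a prescribed denominator vector $(a_1,a_2)$ together with a coefficient polynomial satisfying the greedy minimality recursion of \cite{lee2014greedy}. Once this uniqueness is in hand, each identity $\sigma_i(x[a_1,a_2])=x[a_1',a_2']$ reduces to two checks: that $\sigma_i(x[a_1,a_2])$ has denominator vector $(a_1',a_2')$, and that its coefficient polynomial, rewritten in the variables $x_1,x_2$, still satisfies the greedy recursion.

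First I would verify that the standard permutations $\sigma_p$, initially defined on cluster variables by $\sigma_p(x_n):=x_{2p-n}$, extend to algebra automorphisms of $\mathcal{A}(b,c)$. This is a direct check that the doubly-infinite exchange relations $x_{n-1}x_{n+1}=x_n^b+1$ for $n$ odd and $x_{n-1}x_{n+1}=x_n^c+1$ for $n$ even are carried to one another: since $2p-n$ has the same parity as $n$, the parity-dependent exponent matches up. Concretely, $\sigma_1$ fixes $x_1$ and sends $x_2\mapsto x_0=(x_1^b+1)/x_2$, while $\sigma_2$ fixes $x_2$ and sends $x_1\mapsto x_3=(x_2^c+1)/x_1$. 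Invariance of the greedy basis $\mathcal{B}$ under any such automorphism is then automatic, because both the denominator-vector statistic and the greedy minimality condition are intrinsic to the cluster algebra and are preserved by any algebra automorphism that permutes cluster variables.

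Next I would show that every $\sigma_p$ is a composition of $\sigma_1$ and $\sigma_2$. A short calculation gives $\sigma_2\sigma_1(x_n)=\sigma_2(x_{2-n})=x_{n+2}$, so the iterates $(\sigma_2\sigma_1)^k$ realize the shift $x_n\mapsto x_{n+2k}$; composing with $\sigma_1$ then produces every reflection $\sigma_p$. This reduces the full $\sigma_p$-invariance to understanding the action of $\sigma_1$ and $\sigma_2$ alone.

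For the two explicit formulas I would argue by uniqueness. Apply $\sigma_1$ term-by-term to the compatible-pair expansion of $x[a_1,a_2]$, using $\sigma_1(x_2)=(x_1^b+1)/x_2$, and collect factors to identify the denominator of the result as $x_1^{a_1}x_2^{c[a_1]_+-a_2}$, matching the target. By uniqueness of greedy elements with a prescribed denominator vector, the identity $\sigma_1(x[a_1,a_2])=x[a_1,c[a_1]_+-a_2]$ will follow once one checks that the new numerator satisfies the greedy recursion for $(a_1,c[a_1]_+-a_2)$; the formula for $\sigma_2$ is then obtained by the symmetry swapping $(x_1,x_2,b)\leftrightarrow(x_2,x_1,c)$. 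The main obstacle is precisely this last verification, which amounts to producing a degree-preserving bijection between the compatible pairs on $\mathcal{D}^{[a_1]_+\times[a_2]_+}$ combined with the binomial terms of $(x_1^b+1)^{a_2}$ on one side, and the compatible pairs on $\mathcal{D}^{[a_1]_+\times[c[a_1]_+-a_2]_+}$ on the other. This is a delicate combinatorial identity on the Dyck lattice, and it is the combinatorial core of the proposition.
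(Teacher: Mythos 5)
The paper itself offers no proof of this proposition: it is recalled verbatim from \cite{lee2014greedy} (``We recall some propositions in \cite{lee2014greedy}''), so there is no internal argument to compare against; the relevant proof lives in that reference, where the invariance and the explicit formulas are derived from the defining characterization of greedy elements (pointedness at $(a_1,a_2)$ together with the recurrence determining the coefficients), not from the compatible-pair expansion you start from. Your structural reductions are correct and standard: $x_n\mapsto x_{2p-n}$ respects the parity-dependent exchange relations since $2p-n$ and $n$ have the same parity, $\sigma_1$ fixes $x_1$ and sends $x_2\mapsto x_0=(x_1^b+1)/x_2$, $\sigma_2$ fixes $x_2$ and sends $x_1\mapsto x_3=(x_2^c+1)/x_1$, $\sigma_2\sigma_1$ is the shift $x_n\mapsto x_{n+2}$, and these generate every $\sigma_p$.

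The genuine gap is twofold, and it sits exactly where the content of the proposition is. First, the claim that invariance of $\mathcal{B}$ is ``automatic'' because denominator vectors and the greedy condition are ``intrinsic'' is not right as stated: both notions are defined relative to the fixed initial cluster $(x_1,x_2)$, and $\sigma_1,\sigma_2$ do not fix that cluster (they send $x_2\mapsto x_0$, resp.\ $x_1\mapsto x_3$), so there is no a priori reason that an automorphism permuting cluster variables carries greedy elements to greedy elements --- that is precisely what must be proved, with the explicit parameter change $a_2\mapsto c[a_1]_+-a_2$, resp.\ $a_1\mapsto b[a_2]_+-a_1$, quantifying the failure of the naive ``intrinsic'' argument. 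Second, you reduce the two displayed identities to the statement that the transformed numerator satisfies the greedy recursion, i.e.\ to the degree-preserving bijection between compatible pairs that you describe, and you explicitly leave that verification undone; since you yourself call it the combinatorial core, the proposal is a reduction plan rather than a proof. A further technical snag in the reduction: after substituting $\sigma_1(x_2)=(x_1^b+1)/x_2$ one picks up the prefactor $(x_1^b+1)^{-a_2}$, which has a negative exponent when $a_2>0$, so ``collecting factors'' term-by-term does not identify the denominator vector; establishing pointedness of $\sigma_1(x[a_1,a_2])$ already requires a global argument (Laurent phenomenon plus cancellation), not a termwise one. To complete the proof along your lines you would need to actually construct the bijection on compatible pairs, or else follow \cite{lee2014greedy} and argue through the coefficient recurrence together with uniqueness of the pointed element satisfying it.
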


\section{Perfect matchings of the snake graph}
\label{sec3}

In this section, we study the cluster algebra associated to an annulus with one marked point on each boundary circle by using perfect matchings on the snake graphs mentioned in Section \ref{sec2.3}. We find some recurrence relations between cluster variables associated with the arcs.\\

Let $(S, M)$ be the annulus with one marked point on each of the two boundary components, and let $T= \{\tau_1, \tau_2 \}$ be the triangulation with boundary arcs $\tau_3, \tau_4$ shown in Figure \ref{fig: annulus}. Denote the marked points to be $M_1$ and $M_2$. Let $\mathcal{A}(\textbf{x}_T, \textbf{y}_T, B_T)$ be the cluster algebra associated to this triangulation with the initial cluster $\textbf{x}_T= \{x_1,x_2\}$ and the initial coefficient vector $\textbf{y}_T= \{y_1,y_2\}$. The exchange matrix of this triangulation is
\begin{equation}
    B_T = 
\begin{pmatrix}
0 & 2\\
-2 & 0
\end{pmatrix}.
\end{equation}

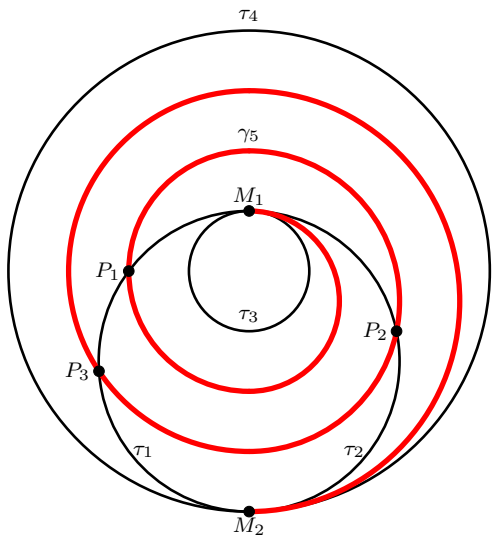
\begin{figure}
\centering
\begin{tikzpicture}[line cap=round,line join=round,>=triangle 45,x=0.4cm,y=0.4cm]

    \draw [line width=1pt] (9,0) circle (8);

    \draw [line width=1pt] (9,0) circle (2);

    \draw [line width=1pt] (9,-3) circle (5);
    
    \draw [line width=2pt][color=red] (9,2) arc (90:-90:3);
    \draw [line width=2pt][color=red] (9,-4) arc (270:90:4);
    \draw [line width=2pt][color=red] (9,4) arc (90:-90:5);
    \draw [line width=2pt][color=red] (9,-6) arc (270:90:6);
    \draw [line width=2pt][color=red] (9,6) arc (90:-90:7);
\begin{scriptsize}
    \draw [fill=black] (9,2) circle (2pt);
    \draw[color=black] (9,2.5) node {$M_{1}$};
    \draw [fill=black] (9,-8) circle (2pt);
    \draw[color=black] (9,-8.5) node {$M_{2}$};

    \draw [fill=black] (5,0) circle (2pt);
    \draw[color=black] (4.3,0) node {$P_{1}$};
    \draw [fill=black] (13.9,-2) circle (2pt);
    \draw[color=black] (13.2,-2) node {$P_{2}$};
    \draw [fill=black] (4.01,-3.33) circle (2pt);
    \draw[color=black] (3.3,-3.33) node {$P_{3}$};

    \draw[color=black] (5.5,-6) node {$\tau_{1}$};
    \draw[color=black] (12.5,-6) node {$\tau_{2}$};
    \draw[color=black] (9,-1.5) node {$\tau_{3}$};
    \draw[color=black] (9,8.5) node {$\tau_{4}$};
    \draw[color=black] (9,4.5) node {$\gamma_{5}$};
\end{scriptsize}
\end{tikzpicture}
\caption{Annulus with triangulation $T=\{\tau_1, \tau_2\}$ and the arc $\gamma_4$ (shown in red)}
\label{fig: annulus}
\end{figure}

\begin{definition}
For $k \geq 3$, define $\gamma_k$ to be the arc that
\begin{itemize}
    \item wraps clockwise around the inner circle $(k-2)$ times and cut $\tau_1, \tau_2$ at $k-2,k-3$ points respectively.
    \item starts from $M_1$ through the corner $(\tau_2,\tau_3)$ of the triangle $\Delta=\{\tau_1,\tau_2,\tau_3\}$.
    \item goes into the corner $(\tau_2,\tau_4)$ of the triangle $\Delta=\{\tau_1,\tau_2,\tau_4\}$ and then ends at $M_2$.
\end{itemize}
\end{definition}

The definition gives the following flips of triangulations:
\begin{itemize}
    \item flips $T=(\tau_1,\tau_2)$ along $\tau_1$, we get another triangulation $T'=(\tau_2, \gamma_3)$,
    \item flips $T'=(\tau_2,\gamma_3)$ along $\tau_2$, we get $T''=(\gamma_3, \gamma_4)$,
    \item for $k \geq 3$, flips $(\gamma_k,\gamma_{k+1})$ along $\gamma_k$, we get $(\gamma_{k+1}, \gamma_{k+2})$.
\end{itemize}

Next, for any $k \geq 3$, the full graph $\overline{G}_{T, \gamma_k}$ and snake graph $G_{T, \gamma_k}$ are constructed as in Section \ref{sec2.3}. They consist of $(2k-5)$ parallelograms stacked consecutively on top of each other. 

\begin{example}
Figure \ref{fig: annulus} shows the arc $\gamma_4$ cutting $\tau_1, \tau_2$ at $P_1,P_2,P_3$ in clockwise direction. Figure \ref{fig: full and snake} shows the graphs $\overline{G}_{T, \gamma_4}$ and $G_{T, \gamma_4}$.
\end{example}

We recall the expansion formula in Section \ref{sec2.3}:
\begin{equation}
    x_{\gamma_k}= \sum_{P} \frac{w(P) y(P)}{x_1^{k-2}x_2^{k-3}}
\end{equation}
where the sum is over all perfect matchings $P$ of $G_{T, \gamma_k}$.\\

The snake graphs $G_{T, \gamma_k}$ contain only an odd number of parallelograms. Therefore, for convenience in calculation, we introduce the following graphs.

\begin{definition}
The graphs $\overline{H}_1,\overline{H}_2,...$ is defined by
\begin{itemize}
    \item For $n \geq 0$, $\overline{H}_{2n+1} = \overline{G}_{T, \gamma_{n+3}}$
    \item For $n \geq 1$, $\overline{H}_{2n}$ is obtained from $\overline{G}_{T, \gamma_{n+3}}$ by deleting the top parallelogram.
\end{itemize}
\end{definition}

Similarly, the graph $H_n$ is obtained from $\overline{H}_n$ by deleting the diagonals of its parallelograms. Now the graphs $H_n$ have $n$ parallelograms. For even more convenience, we can draw $H_n$ horizontally as $n$ consecutive squares (see Figure \ref{fig: perfect}). The configuration is now good enough for calculation.\\

Inspired by the expansion formula, for $n \geq 1$, we introduce the following variables
\begin{equation}
    z_{2n-1}:= \sum_{P} \frac{w(P) y(P)}{x_1^n x_2^{n-1}}
\end{equation}
\begin{equation}
    z_{2n}:= \sum_{P} \frac{w(P) y(P)}{x_1^n x_2^n}
\end{equation}
where the sums are over all perfect matchings $P$ of $H_{2n-1}$ and $H_{2n}$ respectively.

\begin{remark}
The relationship between the cluster variables and the sequence $\{z_n\}$ is given by
\begin{equation}
    x_{\gamma_k}=z_{2k-5}.
\end{equation}
\end{remark}

\begin{example} \label{ex: z123}
Figure \ref{fig: full and snake} shows the graphs of $\overline{H}_2,H_2,\overline{H}_3=\overline{G}_{T, \gamma_4}, H_3=G_{T, \gamma_4}$. Then, applying the formula of $z_n$ on all perfect matchings of $H_n$, we have
\begin{equation}
    \begin{aligned}
z_1 (x_1 )
& = (x_3x_4)(y_1)\\
& + (x_2x_2)\\
\end{aligned}
\end{equation}
\begin{equation}
    \begin{aligned}
z_2 (x_1x_2) 
& = (x_3x_4x_3)(y_1)\\
& + (x_3x_1x_1)(y_1y_2)\\
& + (x_2x_2x_3)\\
\end{aligned}
\end{equation}
\begin{equation}
    \begin{aligned}
z_3 (x_1x_2x_1) 
& = (x_3x_4x_3x_4)(y_1y_1)\\
& + (x_3x_4x_2x_2)(y_1)\\
& + (x_3x_1x_1x_4)(y_1y_2y_1)\\
& + (x_2x_2x_3x_4)(y_1)\\
& + (x_2x_2x_2x_2).\\
\end{aligned}
\end{equation}
\end{example}

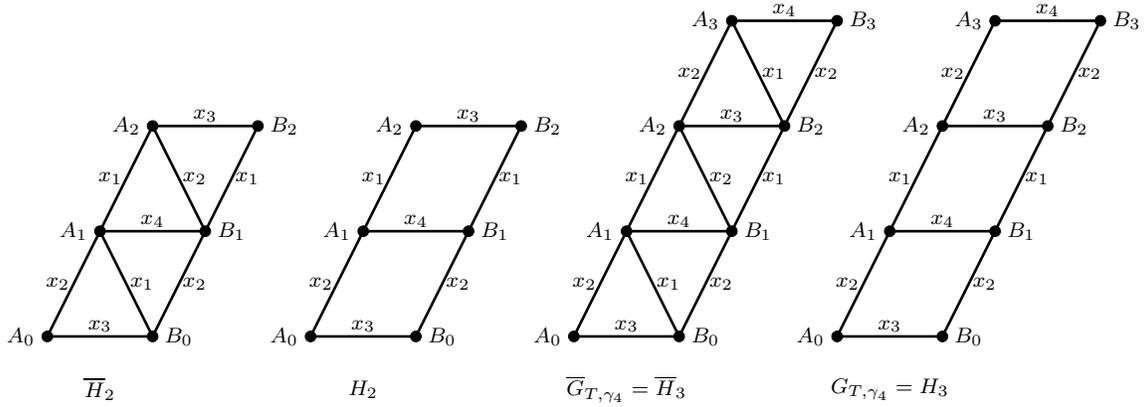
\begin{figure}[H]
\centering
\begin{tikzpicture}[line cap=round,line join=round,>=triangle 45,x=0.7cm,y=0.7cm]

    \draw [line width=1pt] (0,0) -- (2,0);
    \draw [line width=1pt] (1,2) -- (3,2);
    \draw [line width=1pt] (2,4) -- (4,4);
    \draw [line width=1pt] (0,0) -- (2,4);
    \draw [line width=1pt] (2,0) -- (4,4);
    \draw [line width=1pt] (1,2) -- (2,0);
    \draw [line width=1pt] (2,4) -- (3,2);
\begin{scriptsize}
    \draw[color=black] (1,-1) node {$\overline{H}_{2}$};
    
    \draw [fill=black] (0,0) circle (2pt);
    \draw[color=black] (-0.5,0) node {$A_{0}$};
    \draw [fill=black] (1,2) circle (2pt);
    \draw[color=black] (0.5,2) node {$A_{1}$};
    \draw [fill=black] (2,4) circle (2pt);
    \draw[color=black] (1.5,4) node {$A_{2}$};
    \draw [fill=black] (2,0) circle (2pt);
    \draw[color=black] (2.5,0) node {$B_{0}$};
    \draw [fill=black] (3,2) circle (2pt);
    \draw[color=black] (3.5,2) node {$B_{1}$};
    \draw [fill=black] (4,4) circle (2pt);
    \draw[color=black] (4.5,4) node {$B_{2}$};

    \draw[color=black] (1,0.2) node {$x_{3}$};
    \draw[color=black] (2,2.2) node {$x_{4}$};
    \draw[color=black] (3,4.2) node {$x_{3}$};
    \draw[color=black] (0.2,1) node {$x_{2}$};
    \draw[color=black] (2.8,1) node {$x_{2}$};
    \draw[color=black] (1.2,3) node {$x_{1}$};
    \draw[color=black] (3.8,3) node {$x_{1}$};
    \draw[color=black] (1.8,1) node {$x_{1}$};
    \draw[color=black] (2.8,3) node {$x_{2}$};
\end{scriptsize}

\begin{scope}[shift={(1,0)}]
    \draw [line width=1pt] (4,0) -- (6,0);
    \draw [line width=1pt] (5,2) -- (7,2);
    \draw [line width=1pt] (6,4) -- (8,4);
    \draw [line width=1pt] (4,0) -- (6,4);
    \draw [line width=1pt] (6,0) -- (8,4);
\begin{scriptsize}
    \draw[color=black] (5,-1) node {$H_{2}$};
    
    \draw [fill=black] (4,0) circle (2pt);
    \draw[color=black] (3.5,0) node {$A_{0}$};
    \draw [fill=black] (5,2) circle (2pt);
    \draw[color=black] (4.5,2) node {$A_{1}$};
    \draw [fill=black] (6,4) circle (2pt);
    \draw[color=black] (5.5,4) node {$A_{2}$};
    \draw [fill=black] (6,0) circle (2pt);
    \draw[color=black] (6.5,0) node {$B_{0}$};
    \draw [fill=black] (7,2) circle (2pt);
    \draw[color=black] (7.5,2) node {$B_{1}$};
    \draw [fill=black] (8,4) circle (2pt);
    \draw[color=black] (8.5,4) node {$B_{2}$};

    \draw[color=black] (5,0.2) node {$x_{3}$};
    \draw[color=black] (6,2.2) node {$x_{4}$};
    \draw[color=black] (7,4.2) node {$x_{3}$};
    \draw[color=black] (4.2,1) node {$x_{2}$};
    \draw[color=black] (6.8,1) node {$x_{2}$};
    \draw[color=black] (5.2,3) node {$x_{1}$};
    \draw[color=black] (7.8,3) node {$x_{1}$};
\end{scriptsize}
\end{scope}

\begin{scope}[shift={(2,0)}]
    \draw [line width=1pt] (8,0) -- (10,0);
    \draw [line width=1pt] (9,2) -- (11,2);
    \draw [line width=1pt] (10,4) -- (12,4);
    \draw [line width=1pt] (11,6) -- (13,6);
    \draw [line width=1pt] (8,0) -- (11,6);
    \draw [line width=1pt] (10,0) -- (13,6);
    \draw [line width=1pt] (10,0) -- (9,2);
    \draw [line width=1pt] (11,2) -- (10,4);
    \draw [line width=1pt] (12,4) -- (11,6);
\begin{scriptsize}
    \draw[color=black] (9,-1) node {$\overline{G}_{T, \gamma_{4}}=\overline{H}_3$};

    \draw [fill=black] (8,0) circle (2pt);
    \draw[color=black] (7.5,0) node {$A_{0}$};
    \draw [fill=black] (9,2) circle (2pt);
    \draw[color=black] (8.5,2) node {$A_{1}$};
    \draw [fill=black] (10,4) circle (2pt);
    \draw[color=black] (9.5,4) node {$A_{2}$};
    \draw [fill=black] (11,6) circle (2pt);
    \draw[color=black] (10.5,6) node {$A_{3}$};
    \draw [fill=black] (10,0) circle (2pt);
    \draw[color=black] (10.5,0) node {$B_{0}$};
    \draw [fill=black] (11,2) circle (2pt);
    \draw[color=black] (11.5,2) node {$B_{1}$};
    \draw [fill=black] (12,4) circle (2pt);
    \draw[color=black] (12.5,4) node {$B_{2}$};
    \draw [fill=black] (13,6) circle (2pt);
    \draw[color=black] (13.5,6) node {$B_{3}$};

    \draw[color=black] (9,0.2) node {$x_{3}$};
    \draw[color=black] (10,2.2) node {$x_{4}$};
    \draw[color=black] (11,4.2) node {$x_{3}$};
    \draw[color=black] (12,6.2) node {$x_{4}$};
    \draw[color=black] (8.2,1) node {$x_{2}$};
    \draw[color=black] (10.8,1) node {$x_{2}$};
    \draw[color=black] (9.2,3) node {$x_{1}$};
    \draw[color=black] (11.8,3) node {$x_{1}$};
    \draw[color=black] (10.2,5) node {$x_{2}$};
    \draw[color=black] (12.8,5) node {$x_{2}$};
    \draw[color=black] (9.8,1) node {$x_{1}$};
    \draw[color=black] (10.8,3) node {$x_{2}$};
    \draw[color=black] (11.8,5) node {$x_{1}$};
\end{scriptsize}
\end{scope}

\begin{scope}[shift={(3,0)}]
    \draw [line width=1pt] (12,0) -- (14,0);
    \draw [line width=1pt] (13,2) -- (15,2);
    \draw [line width=1pt] (14,4) -- (16,4);
    \draw [line width=1pt] (15,6) -- (17,6);
    \draw [line width=1pt] (12,0) -- (15,6);
    \draw [line width=1pt] (14,0) -- (17,6);
\begin{scriptsize}
    \draw[color=black] (13,-1) node {$G_{T, \gamma_{4}}=H_3$};

    \draw [fill=black] (12,0) circle (2pt);
    \draw[color=black] (11.5,0) node {$A_{0}$};
    \draw [fill=black] (13,2) circle (2pt);
    \draw[color=black] (12.5,2) node {$A_{1}$};
    \draw [fill=black] (14,4) circle (2pt);
    \draw[color=black] (13.5,4) node {$A_{2}$};
    \draw [fill=black] (15,6) circle (2pt);
    \draw[color=black] (14.5,6) node {$A_{3}$};
    \draw [fill=black] (14,0) circle (2pt);
    \draw[color=black] (14.5,0) node {$B_{0}$};
    \draw [fill=black] (15,2) circle (2pt);
    \draw[color=black] (15.5,2) node {$B_{1}$};
    \draw [fill=black] (16,4) circle (2pt);
    \draw[color=black] (16.5,4) node {$B_{2}$};
    \draw [fill=black] (17,6) circle (2pt);
    \draw[color=black] (17.5,6) node {$B_{3}$};
    
    \draw[color=black] (13,0.2) node {$x_{3}$};
    \draw[color=black] (14,2.2) node {$x_{4}$};
    \draw[color=black] (15,4.2) node {$x_{3}$};
    \draw[color=black] (16,6.2) node {$x_{4}$};
    \draw[color=black] (12.2,1) node {$x_{2}$};
    \draw[color=black] (14.8,1) node {$x_{2}$};
    \draw[color=black] (13.2,3) node {$x_{1}$};
    \draw[color=black] (15.8,3) node {$x_{1}$};
    \draw[color=black] (14.2,5) node {$x_{2}$};
    \draw[color=black] (16.8,5) node {$x_{2}$};
\end{scriptsize}
\end{scope}

\end{tikzpicture}
\caption{$\overline{H}_{2},H_2$; the full and snake graphs of the arcs $\gamma_4$}
\label{fig: full and snake}
\end{figure}

 Let $A_0,A_1,...,A_{k}$ and $B_0,B_1,...,B_{k}$ be vertices and $x_i$ be weights on edges of the graph $H_k$. The graph $P_{-}$ contains edges of the form $A_{2i}A_{2i+1}, B_{2i}B_{2i+1}$.

\begin{example}
Figure \ref{fig: perfect} shows perfect matchings $P$ on $H_{13}=G_{T, \gamma_9}$ and $P_1$ on $H_{12}$. The third and fourth graphs are their symmetric differences $H_{P}$ and $H_{P_1}$ with $P_-$.
\end{example}

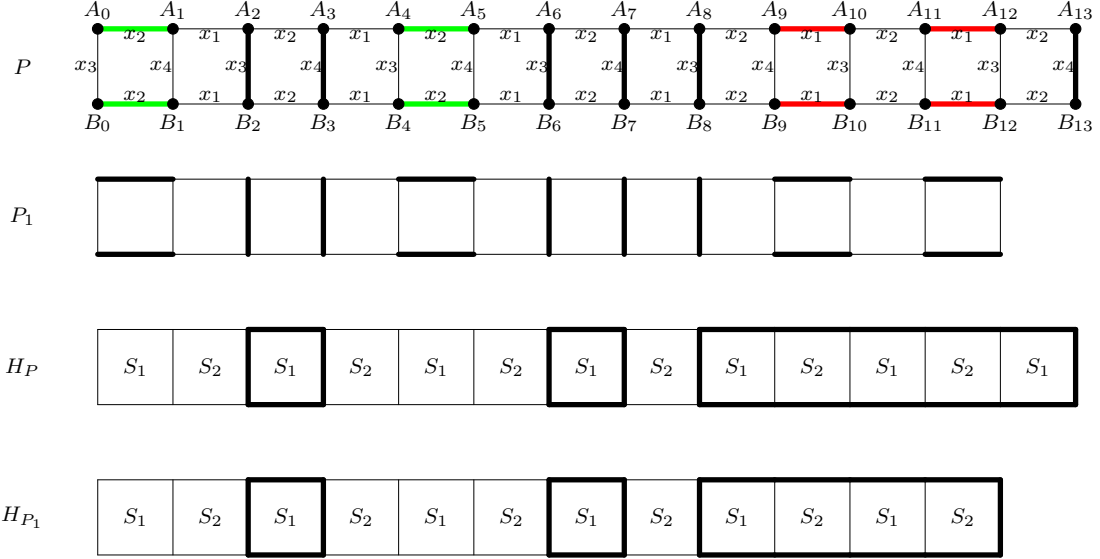
\begin{figure}[H]
\centering
\begin{tikzpicture}[line cap=round,line join=round,>=triangle 45,x=0.5cm,y=0.5cm]
    \draw [line width=0.2pt] (0,0) -- (26,0);
    \draw [line width=0.2pt] (0,2) -- (26,2);
    \draw [line width=0.2pt] (0,0) -- (0,2);
    \draw [line width=0.2pt] (2,0) -- (2,2);
    \draw [line width=0.2pt] (4,0) -- (4,2);
    \draw [line width=0.2pt] (6,0) -- (6,2);
    \draw [line width=0.2pt] (8,0) -- (8,2);
    \draw [line width=0.2pt] (10,0) -- (10,2);
    \draw [line width=0.2pt] (12,0) -- (12,2);
    \draw [line width=0.2pt] (14,0) -- (14,2);
    \draw [line width=0.2pt] (16,0) -- (16,2);
    \draw [line width=0.2pt] (18,0) -- (18,2);
    \draw [line width=0.2pt] (20,0) -- (20,2);
    \draw [line width=0.2pt] (22,0) -- (22,2);
    \draw [line width=0.2pt] (24,0) -- (24,2);
    \draw [line width=0.2pt] (26,0) -- (26,2);

    \draw [line width=2pt][color=green] (0,0) -- (2,0);
    \draw [line width=2pt][color=green] (0,2) -- (2,2);
    \draw [line width=2pt] (4,0) -- (4,2);
    \draw [line width=2pt] (6,0) -- (6,2);
    \draw [line width=2pt][color=green] (8,0) -- (10,0);
    \draw [line width=2pt][color=green] (8,2) -- (10,2);
    \draw [line width=2pt] (12,0) -- (12,2);
    \draw [line width=2pt] (14,0) -- (14,2);
    \draw [line width=2pt] (16,0) -- (16,2);
    \draw [line width=2pt][color=red] (18,0) -- (20,0);
    \draw [line width=2pt][color=red] (18,2) -- (20,2);
    \draw [line width=2pt] (26,0) -- (26,2);
    \draw [line width=2pt][color=red] (22,0) -- (24,0);
    \draw [line width=2pt][color=red] (22,2) -- (24,2);

\begin{scriptsize}
    \draw [color=black] (1,0.2) node {$x_2$};
    \draw [color=black] (3,0.2) node {$x_1$};
    \draw [color=black] (5,0.2) node {$x_2$};
    \draw [color=black] (7,0.2) node {$x_1$};
    \draw [color=black] (9,0.2) node {$x_2$};
    \draw [color=black] (11,0.2) node {$x_1$};
    \draw [color=black] (13,0.2) node {$x_2$};
    \draw [color=black] (15,0.2) node {$x_1$};
    \draw [color=black] (17,0.2) node {$x_2$};
    \draw [color=black] (19,0.2) node {$x_1$};
    \draw [color=black] (21,0.2) node {$x_2$};
    \draw [color=black] (23,0.2) node {$x_1$};
    \draw [color=black] (25,0.2) node {$x_2$};

    \draw [color=black] (1,1.8) node {$x_2$};
    \draw [color=black] (3,1.8) node {$x_1$};
    \draw [color=black] (5,1.8) node {$x_2$};
    \draw [color=black] (7,1.8) node {$x_1$};
    \draw [color=black] (9,1.8) node {$x_2$};
    \draw [color=black] (11,1.8) node {$x_1$};
    \draw [color=black] (13,1.8) node {$x_2$};
    \draw [color=black] (15,1.8) node {$x_1$};
    \draw [color=black] (17,1.8) node {$x_2$};
    \draw [color=black] (19,1.8) node {$x_1$};
    \draw [color=black] (21,1.8) node {$x_2$};
    \draw [color=black] (23,1.8) node {$x_1$};
    \draw [color=black] (25,1.8) node {$x_2$};

    \draw [color=black] (-0.3,1) node {$x_3$};
    \draw [color=black] (1.7,1) node {$x_4$};
    \draw [color=black] (3.7,1) node {$x_3$};
    \draw [color=black] (5.7,1) node {$x_4$};
    \draw [color=black] (7.7,1) node {$x_3$};
    \draw [color=black] (9.7,1) node {$x_4$};
    \draw [color=black] (11.7,1) node {$x_3$};
    \draw [color=black] (13.7,1) node {$x_4$};
    \draw [color=black] (15.7,1) node {$x_3$};
    \draw [color=black] (17.7,1) node {$x_4$};
    \draw [color=black] (19.7,1) node {$x_3$};
    \draw [color=black] (21.7,1) node {$x_4$};
    \draw [color=black] (23.7,1) node {$x_3$};
    \draw [color=black] (25.7,1) node {$x_4$};

    \draw [fill=black] (0,0) circle (2pt);
    \draw[color=black] (0,-0.5) node {$B_{0}$};
    \draw [fill=black] (2,0) circle (2pt);
    \draw[color=black] (2,-0.5) node {$B_{1}$};
    \draw [fill=black] (4,0) circle (2pt);
    \draw[color=black] (4,-0.5) node {$B_{2}$};
    \draw [fill=black] (6,0) circle (2pt);
    \draw[color=black] (6,-0.5) node {$B_{3}$};
    \draw [fill=black] (8,0) circle (2pt);
    \draw[color=black] (8,-0.5) node {$B_{4}$};
    \draw [fill=black] (10,0) circle (2pt);
    \draw[color=black] (10,-0.5) node {$B_{5}$};
    \draw [fill=black] (12,0) circle (2pt);
    \draw[color=black] (12,-0.5) node {$B_{6}$};
    \draw [fill=black] (14,0) circle (2pt);
    \draw[color=black] (14,-0.5) node {$B_{7}$};
    \draw [fill=black] (16,0) circle (2pt);
    \draw[color=black] (16,-0.5) node {$B_{8}$};
    \draw [fill=black] (18,0) circle (2pt);
    \draw[color=black] (18,-0.5) node {$B_{9}$};
    \draw [fill=black] (20,0) circle (2pt);
    \draw[color=black] (20,-0.5) node {$B_{10}$};
    \draw [fill=black] (22,0) circle (2pt);
    \draw[color=black] (22,-0.5) node {$B_{11}$};
    \draw [fill=black] (24,0) circle (2pt);
    \draw[color=black] (24,-0.5) node {$B_{12}$};
    \draw [fill=black] (26,0) circle (2pt);
    \draw[color=black] (26,-0.5) node {$B_{13}$};

    \draw [fill=black] (0,2) circle (2pt);
    \draw[color=black] (0,2.5) node {$A_{0}$};
    \draw [fill=black] (2,2) circle (2pt);
    \draw[color=black] (2,2.5) node {$A_{1}$};
    \draw [fill=black] (4,2) circle (2pt);
    \draw[color=black] (4,2.5) node {$A_{2}$};
    \draw [fill=black] (6,2) circle (2pt);
    \draw[color=black] (6,2.5) node {$A_{3}$};
    \draw [fill=black] (8,2) circle (2pt);
    \draw[color=black] (8,2.5) node {$A_{4}$};
    \draw [fill=black] (10,2) circle (2pt);
    \draw[color=black] (10,2.5) node {$A_{5}$};
    \draw [fill=black] (12,2) circle (2pt);
    \draw[color=black] (12,2.5) node {$A_{6}$};
    \draw [fill=black] (14,2) circle (2pt);
    \draw[color=black] (14,2.5) node {$A_{7}$};
    \draw [fill=black] (16,2) circle (2pt);
    \draw[color=black] (16,2.5) node {$A_{8}$};
    \draw [fill=black] (18,2) circle (2pt);
    \draw[color=black] (18,2.5) node {$A_{9}$};
    \draw [fill=black] (20,2) circle (2pt);
    \draw[color=black] (20,2.5) node {$A_{10}$};
    \draw [fill=black] (22,2) circle (2pt);
    \draw[color=black] (22,2.5) node {$A_{11}$};
    \draw [fill=black] (24,2) circle (2pt);
    \draw[color=black] (24,2.5) node {$A_{12}$};
    \draw [fill=black] (26,2) circle (2pt);
    \draw[color=black] (26,2.5) node {$A_{13}$};
    
\end{scriptsize}

\begin{scope}[shift={(0,-4)}]
    \draw [line width=0.2pt] (0,0) -- (24,0);
    \draw [line width=0.2pt] (0,2) -- (24,2);
    \draw [line width=0.2pt] (0,0) -- (0,2);
    \draw [line width=0.2pt] (2,0) -- (2,2);
    \draw [line width=0.2pt] (4,0) -- (4,2);
    \draw [line width=0.2pt] (6,0) -- (6,2);
    \draw [line width=0.2pt] (8,0) -- (8,2);
    \draw [line width=0.2pt] (10,0) -- (10,2);
    \draw [line width=0.2pt] (12,0) -- (12,2);
    \draw [line width=0.2pt] (14,0) -- (14,2);
    \draw [line width=0.2pt] (16,0) -- (16,2);
    \draw [line width=0.2pt] (18,0) -- (18,2);
    \draw [line width=0.2pt] (20,0) -- (20,2);
    \draw [line width=0.2pt] (22,0) -- (22,2);
    \draw [line width=0.2pt] (24,0) -- (24,2);

    \draw [line width=2pt] (0,0) -- (2,0);
    \draw [line width=2pt] (0,2) -- (2,2);
    \draw [line width=2pt] (4,0) -- (4,2);
    \draw [line width=2pt] (6,0) -- (6,2);
    \draw [line width=2pt] (8,0) -- (10,0);
    \draw [line width=2pt] (8,2) -- (10,2);
    \draw [line width=2pt] (12,0) -- (12,2);
    \draw [line width=2pt] (14,0) -- (14,2);
    \draw [line width=2pt] (16,0) -- (16,2);
    \draw [line width=2pt] (18,0) -- (20,0);
    \draw [line width=2pt] (18,2) -- (20,2);
    \draw [line width=2pt] (22,0) -- (24,0);
    \draw [line width=2pt] (22,2) -- (24,2);
\end{scope}

\begin{scope}[shift={(0,-8)}]
    \draw [line width=0.2pt] (0,0) -- (26,0);
    \draw [line width=0.2pt] (0,2) -- (26,2);
    \draw [line width=0.2pt] (0,0) -- (0,2);
    \draw [line width=0.2pt] (2,0) -- (2,2);
    \draw [line width=0.2pt] (4,0) -- (4,2);
    \draw [line width=0.2pt] (6,0) -- (6,2);
    \draw [line width=0.2pt] (8,0) -- (8,2);
    \draw [line width=0.2pt] (10,0) -- (10,2);
    \draw [line width=0.2pt] (12,0) -- (12,2);
    \draw [line width=0.2pt] (14,0) -- (14,2);
    \draw [line width=0.2pt] (16,0) -- (16,2);
    \draw [line width=0.2pt] (18,0) -- (18,2);
    \draw [line width=0.2pt] (20,0) -- (20,2);
    \draw [line width=0.2pt] (22,0) -- (22,2);
    \draw [line width=0.2pt] (24,0) -- (24,2);
    \draw [line width=0.2pt] (26,0) -- (26,2);

    \draw [line width=2pt] (4,0) -- (6,0);
    \draw [line width=2pt] (4,2) -- (6,2);
    \draw [line width=2pt] (4,0) -- (4,2);
    \draw [line width=2pt] (6,0) -- (6,2);
    \draw [line width=2pt] (12,0) -- (14,0);
    \draw [line width=2pt] (12,2) -- (14,2);
    \draw [line width=2pt] (12,0) -- (12,2);
    \draw [line width=2pt] (14,0) -- (14,2);
    \draw [line width=2pt] (16,0) -- (16,2);
    \draw [line width=2pt] (18,0) -- (20,0);
    \draw [line width=2pt] (18,2) -- (20,2);
    \draw [line width=2pt] (26,0) -- (26,2);
    \draw [line width=2pt] (22,0) -- (24,0);
    \draw [line width=2pt] (22,2) -- (24,2);
    \draw [line width=2pt] (16,0) -- (18,0);
    \draw [line width=2pt] (16,2) -- (18,2);
    \draw [line width=2pt] (20,0) -- (22,0);
    \draw [line width=2pt] (20,2) -- (22,2);
    \draw [line width=2pt] (24,0) -- (26,0);
    \draw [line width=2pt] (24,2) -- (26,2);

    \begin{scriptsize}
        \draw (1,1) node {$S_1$};
        \draw (3,1) node {$S_2$};
        \draw (5,1) node {$S_1$};
        \draw (7,1) node {$S_2$};
        \draw (9,1) node {$S_1$};
        \draw (11,1) node {$S_2$};
        \draw (13,1) node {$S_1$};
        \draw (15,1) node {$S_2$};
        \draw (17,1) node {$S_1$};
        \draw (19,1) node {$S_2$};
        \draw (21,1) node {$S_1$};
        \draw (23,1) node {$S_2$};
        \draw (25,1) node {$S_1$};
    \end{scriptsize}
\end{scope}

\begin{scope}[shift={(0,-12)}]
    \draw [line width=0.2pt] (0,0) -- (24,0);
    \draw [line width=0.2pt] (0,2) -- (24,2);
    \draw [line width=0.2pt] (0,0) -- (0,2);
    \draw [line width=0.2pt] (2,0) -- (2,2);
    \draw [line width=0.2pt] (4,0) -- (4,2);
    \draw [line width=0.2pt] (6,0) -- (6,2);
    \draw [line width=0.2pt] (8,0) -- (8,2);
    \draw [line width=0.2pt] (10,0) -- (10,2);
    \draw [line width=0.2pt] (12,0) -- (12,2);
    \draw [line width=0.2pt] (14,0) -- (14,2);
    \draw [line width=0.2pt] (16,0) -- (16,2);
    \draw [line width=0.2pt] (18,0) -- (18,2);
    \draw [line width=0.2pt] (20,0) -- (20,2);
    \draw [line width=0.2pt] (22,0) -- (22,2);
    \draw [line width=0.2pt] (24,0) -- (24,2);

    \draw [line width=2pt] (4,0) -- (6,0);
    \draw [line width=2pt] (4,2) -- (6,2);
    \draw [line width=2pt] (4,0) -- (4,2);
    \draw [line width=2pt] (6,0) -- (6,2);
    \draw [line width=2pt] (12,0) -- (14,0);
    \draw [line width=2pt] (12,2) -- (14,2);
    \draw [line width=2pt] (12,0) -- (12,2);
    \draw [line width=2pt] (14,0) -- (14,2);
    \draw [line width=2pt] (16,0) -- (16,2);
    \draw [line width=2pt] (18,0) -- (20,0);
    \draw [line width=2pt] (18,2) -- (20,2);
    \draw [line width=2pt] (24,0) -- (24,2);
    \draw [line width=2pt] (22,0) -- (24,0);
    \draw [line width=2pt] (22,2) -- (24,2);
    \draw [line width=2pt] (16,0) -- (18,0);
    \draw [line width=2pt] (16,2) -- (18,2);
    \draw [line width=2pt] (20,0) -- (22,0);
    \draw [line width=2pt] (20,2) -- (22,2);

    \begin{scriptsize}
        \draw (1,1) node {$S_1$};
        \draw (3,1) node {$S_2$};
        \draw (5,1) node {$S_1$};
        \draw (7,1) node {$S_2$};
        \draw (9,1) node {$S_1$};
        \draw (11,1) node {$S_2$};
        \draw (13,1) node {$S_1$};
        \draw (15,1) node {$S_2$};
        \draw (17,1) node {$S_1$};
        \draw (19,1) node {$S_2$};
        \draw (21,1) node {$S_1$};
        \draw (23,1) node {$S_2$};
    \end{scriptsize}
\end{scope}

\begin{scriptsize}
    \draw (-2,1) node {$P$};
    \draw (-2,-3) node {$P_1$};
    \draw (-2,-7) node {$H_{P}$};
    \draw (-2,-11) node {$H_{P_1}$};  
\end{scriptsize}

\end{tikzpicture}
\caption{Perfect matchings and their symmetric differences. (The colors will be explained in Example \ref{ex: phi}.)}
\label{fig: perfect}
\end{figure}

\begin{proposition}
With the above notations, for $k \geq 1$,
\begin{equation}
    z_{2k+1}= (x_1^{-1} x_4y_1) z_{2k} +(x_1^{-1} x_2) z_{2k-1}.
\end{equation}
\begin{equation}
    z_{2k+2}= (x_2^{-1}x_3) z_{2k+1} + (x_1x_2^{-1}y_1y_2) z_{2k}.
\end{equation}
\end{proposition}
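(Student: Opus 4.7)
The plan is to prove both recurrences by partitioning $\mathrm{PM}(H_n)$ according to how the rightmost pair of vertices $A_n, B_n$ is matched. For the first recurrence, each perfect matching $P$ of $H_{2k+1}$ falls into exactly one of two classes: (A) $A_{2k+1}B_{2k+1}\in P$, in which case $P\mapsto P':=P\setminus\{A_{2k+1}B_{2k+1}\}$ gives a bijection onto $\mathrm{PM}(H_{2k})$, and the removed edge contributes weight $x_4$; or (B) both $A_{2k}A_{2k+1}, B_{2k}B_{2k+1}\in P$, in which case $P\mapsto P'':=P\setminus\{A_{2k}A_{2k+1},B_{2k}B_{2k+1}\}$ gives a bijection onto $\mathrm{PM}(H_{2k-1})$, and the two removed horizontal edges contribute weight $x_2\cdot x_2=x_2^2$. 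Thus $w(P)=x_4\,w(P')$ in (A) and $w(P)=x_2^2\,w(P'')$ in (B).

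The key step is tracking $y(P)$ via Theorem \ref{thm: y formula}. Recall $P_-^{(2k+1)}=\{A_{2i}A_{2i+1},B_{2i}B_{2i+1}:0\le i\le k\}$ (all horizontal), whereas $P_-^{(2k)}=\{A_{2i}A_{2i+1},B_{2i}B_{2i+1}:0\le i\le k-1\}\cup\{A_{2k}B_{2k}\}$. In Class (A), three of the four boundary edges of tile $2k+1$ lie in $G_P=P_-^{(2k+1)}\ominus P$; a case analysis on whether $A_{2k}B_{2k}\in P$ shows that tile $2k+1$ is in the tile union of $G_P$, and the structure on tiles $1,\dots,2k$ coincides with the tile union of $G_{P'}$ on $H_{2k}$. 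Since odd-indexed tiles correspond to $\tau_1$, this gives $y(P)=y(P')\,y_1$. In Class (B), the edges $A_{2k}A_{2k+1},B_{2k}B_{2k+1}$ lie in both $P$ and $P_-^{(2k+1)}$ and thus cancel in the symmetric difference, whence the tile union of $G_P$ agrees with that of $G_{P''}$ on $H_{2k-1}$, giving $y(P)=y(P'')$. Summing $w(P)y(P)$ over each class and dividing by $x_1^{k+1}x_2^k$ yields the first recurrence. The second recurrence follows analogously on $H_{2k+2}$: the twist is that $A_{2k+2}B_{2k+2}\in P_-^{(2k+2)}$ now, so in the analogous Class (A') this edge cancels in the symmetric difference (giving $y(P)=y(P')$), while in Class (B') the two horizontal edges are not in $P_-^{(2k+2)}$ and a similar case analysis shows that both tiles $2k+1$ (a $\tau_1$-tile) and $2k+2$ (a $\tau_2$-tile) enter the tile union, giving $y(P)=y(P'')\,y_1y_2$.

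The main obstacle is the $y$-factor tracking, which is subtler than the weight tracking because $P_-$ depends on the parity of $n$: for odd $n$ it consists only of horizontal edges, while for even $n$ it additionally contains the right vertical edge $A_nB_n$. This parity-dependence changes how tiles near the right boundary enter or leave the tile union of $G_P$, and one must verify carefully in each subcase, depending on whether the corresponding inner vertical edge $A_{n-1}B_{n-1}$ is in $P$, that the tile union of $G_P$ matches that of the smaller-graph matching up to the explicit new tiles contributing to the $y$-factor.
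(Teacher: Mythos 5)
Your proposal is correct and follows essentially the same route as the paper: the same partition of perfect matchings of $H_{2k+1}$ (resp.\ $H_{2k+2}$) according to whether the last tile is matched by its right vertical edge or by the two rightmost horizontal edges, the same removal/addition bijections onto matchings of the two smaller graphs, and the same weight and $y$-monomial bookkeeping. The only difference is that you spell out in detail (via the parity-dependent form of $P_-$ and the case analysis on $A_{2k}B_{2k}$) the identities the paper simply asserts as $H_P=H_{P_1}\cup S_1$ and $H_P=H_{P_2}$, which is a welcome elaboration rather than a different argument.
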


\begin{proof}
If $P$ is a perfect matching of $H_{2n+1}$ then $P$ contains $\{A_{2k+1}B_{2k+1}\}$ or $\{A_{2k}A_{2k+1},B_{2k}B_{2k+1}\}$ in the last portion.
\begin{itemize}
\item If the perfect matching $P$ contains $\{A_{2k+1}B_{2k+1}\}$ (see Figure \ref{fig: perfect}):
\begin{itemize}
    \item $P_1=P \setminus \{A_{2k+1}B_{2k+1}\}$ is a perfect matching of $H_{2n}$,
    \item $H_{P}=H_{P_1} \cup S_1$.
\end{itemize}

\item If the perfect matching $P$ contains $\{A_{2k}A_{2k+1},B_{2k}B_{2k+1}\}$ (similar to the previous case):
\begin{itemize}
    \item $P_2=P \setminus \{A_{2k}A_{2k+1},B_{2k}B_{2k+1}\}$ is a perfect matching of $H_{2n-1}$,
    \item $H_{P}=H_{P_2}$.
\end{itemize}

\end{itemize}
Conversely, we can get a perfect matching of $H_{2n+1}$ from perfect matching of $H_{2n}$ by adding the edge $A_{2k+1}B_{2k+1}$ and from perfect matching of $H_{2n-1}$ by adding the edges $A_{2k}A_{2k+1},B_{2k}B_{2k+1}$. So, from Theorem \ref{thm: y formula}--\ref{thm: x formula}, we have:
\[
\begin{aligned}
    z_{2k+1} & = \sum_{P \in H_{2k+1}} \frac{w(P)y(P)}{x_1^{k+1}x_2^k} \\
    & = \sum_{P_1 \in H_{2k}} \frac{(x_4w(P_1))(y_1y(P_1))}{x_1^{k+1}x_2^k} + \sum_{P_2 \in H_{2k-1}} 
    \frac{(x_2^2w(P_2))(y(P_2))}{x_1^{k+1}x_2^k}\\
    & = (x_1^{-1}x_4) z_{2k}+ (x_1^{-1}x_2) z_{2k-1}.
\end{aligned}
\]
By the same method for the perfect matchings of $H_{2n+2}$, we get
\[z_{2k+2}= (x_2^{-1}x_3) z_{2k+1} + (x_1x_2^{-1}y_1y_2) z_{2k} . \]
\end{proof}

\begin{corollary}
The number of perfect matchings on the graph $H_n$ is $F_{n+2}$.
\end{corollary}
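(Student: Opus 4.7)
The plan is to reduce the counting question directly to the recurrences already established in the preceding proposition. Writing $M_n$ for the number of perfect matchings of $H_n$, I first specialize $x_1=x_2=x_3=x_4=1$ and $y_1=y_2=1$ in the definition of $z_n$: each factor $w(P)$ and each monomial $y(P)$ collapses to $1$, as does the normalizing denominator $x_1^k x_2^{\ell}$, so $z_n$ becomes the plain count $M_n$. Applying the same specialization to both recurrences in the proposition, the prefactors $x_1^{-1}x_4$, $x_1^{-1}x_2$, $x_2^{-1}x_3$ and $x_1 x_2^{-1}$ all become $1$, and both relations degenerate to the single Fibonacci recurrence
\Eqn{
M_{n+1} = M_n + M_{n-1}, \qquad n \geq 2.
}

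Next I would dispose of the two base cases by direct inspection. The graph $H_1$ is a single $4$-cycle, which admits exactly two perfect matchings (the pair of horizontal edges, or the pair of vertical edges), giving $M_1 = 2 = F_3$. The graph $H_2$ consists of two squares glued along a vertical edge; its three perfect matchings are the three verticals, or the two leftmost horizontals together with the rightmost vertical, or the leftmost vertical together with the two rightmost horizontals, giving $M_2 = 3 = F_4$. A routine induction on $n$ using the identity $F_{(n+1)+2} = F_{n+2} + F_{n+1}$ then yields $M_n = F_{n+2}$ for every $n \geq 1$.

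There is no substantive obstacle: the real combinatorial content — the case analysis showing that every perfect matching of $H_{n+1}$ either contains the right-most vertical edge $A_{n+1}B_{n+1}$ (bijecting with matchings of $H_n$) or the two right-most horizontal edges $A_nA_{n+1}, B_nB_{n+1}$ (bijecting with matchings of $H_{n-1}$) — has already been carried out in establishing the proposition. The only care needed is to check that both recurrences (the odd-indexed and even-indexed ones) reduce to the \emph{same} unweighted Fibonacci recurrence after specialization, which is immediate, and to verify the two base values by hand.
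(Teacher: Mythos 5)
Your proof is correct and follows essentially the same route as the paper: specialize all $x_i$ and $y_j$ to $1$ so that $z_n$ counts matchings, observe that both recurrences of the preceding proposition collapse to the Fibonacci recurrence, and induct from the base values $M_1=2=F_3$, $M_2=3=F_4$ (which the paper reads off from Example \ref{ex: z123} rather than by direct inspection, an immaterial difference).
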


\begin{proof}
Let $|z_k|$ be the value of $z_k$ by setting $x_1=x_2=x_3=x_4=y_1=y_2=1$. We will prove by induction that $$|z_k|=F_{k+2}$$ for any $k \geq 1$. The base case $|z_1|=2=F_3$ and $|z_2|=3=F_4$ are true as shown in Example \ref{ex: z123}. Suppose that the claim is true for $k\leq n$,
\begin{itemize}
    \item If $n$ is odd,
    \[z_{n+1}= (x_1^{-1} x_4y_1) z_{n} +(x_1^{-1} x_2) z_{n-1} \implies |z_{n+1}|=|z_{n}|+|z_{n-1}|=F_{n+2}+F_{n+1}=F_{n+3}. \]
    \item If $n$ is even,
    \[z_{n+1}= (x_2^{-1} x_3) z_{n} +(x_1 x_2^{-1} y_1y_2) z_{n-1} \implies |z_{n+1}|=|z_{n}|+|z_{n-1}|=F_{n+2}+F_{n+1}=F_{n+3}. \]
\end{itemize}
Thus the claim is true for $k=n+1$, so the claim is true for any $k \geq 1$ by induction. Moreover, by the definition
\[z_n= \sum_{P} \frac{w(P) y(P)}{x_{i_1}x_{i_2}...x_{i_d}}\]
where the sum is over all perfect matchings $P$ of $H_n$. Since each term becomes $1$ under the evaluation, $|z_n|$ is the number of perfect matchings on the graph $H_n$.
\end{proof}

\begin{corollary}\label{clusFib}
In the coefficient free case, the sum of coefficients in the Laurent polynomials of the cluster variables $x_{\gamma_k}$ is $F_{2n-3}$
\end{corollary}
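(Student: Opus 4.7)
The plan is to deduce this corollary almost immediately from the preceding counting corollary (which gives $F_{n+2}$ perfect matchings on $H_n$), by specializing the expansion formula of Theorem \ref{thm: x formula}. First, since we are in the coefficient free case $\mathbb{P} = 1$, every $y_i$ is trivial, so $y(P) = 1$ for every perfect matching $P$ of $G_{T, \gamma_k}$, and the expansion formula reduces to
\Eqn{
x_{\gamma_k} = \sum_{P} \frac{w(P)}{x_1^{k-2}x_2^{k-3}},
}
a Laurent polynomial in the initial cluster variables $x_1, x_2$.

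Next, I would observe that the sum of the integer coefficients of such a Laurent polynomial is obtained by the evaluation $x_1 = x_2 = 1$. Together with the standing boundary convention $x_3 = x_4 = 1$ (boundary arcs have weight $1$), every factor appearing in each weight $w(P)$ then equals $1$, so each summand in the expansion contributes exactly $1$. Hence the sum of coefficients equals the total number of perfect matchings of $G_{T, \gamma_k}$.

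Finally, I would match the indexing via $G_{T, \gamma_k} = H_{2k-5}$ (take $n = k - 3$ in the definition $\overline{H}_{2n+1} = \overline{G}_{T, \gamma_{n+3}}$ and strip diagonals) and invoke the previous corollary, which gives that the number of perfect matchings of $H_{2k-5}$ is $F_{(2k-5) + 2} = F_{2k-3}$, matching the claim (reading the statement's $n$ as the cluster index $k$). There is no substantive obstacle here: the only thing to be careful about is the index conversion between the $\gamma_k$ and $H_n$ conventions, and the observation that under the substitution the monomial $y(P)$ and all the weight factors simultaneously collapse to $1$, which is exactly what the coefficient free hypothesis and the boundary convention guarantee.
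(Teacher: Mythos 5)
Your proposal is correct and follows essentially the same route as the paper: specialize the expansion formula (with $y(P)=1$ in the coefficient free case and $x_1=x_2=x_3=x_4=1$) so the sum of coefficients equals the number of perfect matchings of $G_{T,\gamma_k}=H_{2k-5}$, which the preceding corollary identifies as $F_{2k-3}$. Your remark about reconciling the statement's $n$ with the cluster index $k$ is also apt, since the paper's own phrasing mixes the two indices.
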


\begin{proof}
The sum of coefficients in the Laurent polynomials is equal to the number of perfect matchings on the graph $G_{\gamma_k}=H_{2k-5}$, that is $F_{2n-3}$.
\end{proof}

\section{Compatible pairs on the maximal Dyck path}
\label{sec4}

In this section, we prove that the cluster variable $x_n$ in $\mathcal{A}(2,2)$ is the greedy element $x[n-2,n-3]$ which is a Laurent polynomial of the initial cluster $\{x_1,x_2\}$. Then we use compatible pairs on the maximal Dyck path $\mathcal{D}^{(n-2) \times (n-3)}$ to calculate these greedy elements, see Section \ref{sec2.4} for the terminologies.\\

Consider the coefficient free cluster algebra $\mathcal{A}(2,2)$ with initial cluster $\{x_1,x_2\}$ and exchange relation $x_{n-1}x_{n+1}=x_n^2+1$ for $n\in\mathbb{Z}$. We first prove the following lemma.

\begin{lemma}
For $n\geq 2$, the cluster variable
\begin{equation}
    x_n=x[n-2,n-3].
\end{equation}
\end{lemma}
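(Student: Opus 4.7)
The plan is induction on $n$ with two base cases ($n=2$ and $n=3$), propagating the result by a shift $n \mapsto n+2$ via the automorphism $\sigma_2\circ\sigma_1$. The key observation is that, by the definition $\sigma_p(x_n)=x_{2p-n}$, one has $\sigma_2\sigma_1(x_n)=\sigma_2(x_{2-n})=x_{n+2}$, while by Proposition \ref{prop: sigma formula} the same automorphism preserves the greedy basis with explicit formulas on the parameters $(a_1,a_2)$. Matching these two actions will yield the inductive step.

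For the base cases, I would verify $x_2=x[0,-1]$ directly from the Remark following the definition of greedy elements: both coordinates are $\leq 0$, so $x[0,-1]=x_1^{0}x_2^{1}=x_2$. For $n=3$, the Dyck path $\mathcal{D}^{1\times 0}$ consists of a single horizontal edge $u_1$ and no vertical edges; the compatibility condition from Definition \ref{compatibledef} is vacuous because $S_2=\emptyset$, so the only compatible pairs are $(\emptyset,\emptyset)$ and $(\{u_1\},\emptyset)$. The greedy element formula then gives $x[1,0]=x_1^{-1}(1+x_2^{2})=x_3$, matching the exchange relation $x_1x_3=x_2^{2}+1$.

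For the inductive step, assuming $x_n=x[n-2,n-3]$ for some $n\geq 2$, I would first compute, using $b=c=2$ and $[n-2]_+=n-2$ (valid since $n\geq 2$),
\[
\sigma_1(x[n-2,n-3])=x[n-2,\;2(n-2)-(n-3)]=x[n-2,n-1].
\]
Then, using $[n-1]_+=n-1$,
\[
\sigma_2(x[n-2,n-1])=x[2(n-1)-(n-2),\;n-1]=x[n,n-1].
\]
Combining with $\sigma_2\sigma_1(x_n)=x_{n+2}$ yields $x_{n+2}=x[n,n-1]=x[(n+2)-2,(n+2)-3]$, which is the desired shift. Running this induction on each parity of $n$ separately from the two base cases then covers all $n\geq 2$.

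I do not expect a substantive obstacle here: once the correct automorphism $\sigma_2\sigma_1$ is identified, what remains is only the routine bookkeeping of the $[\,\cdot\,]_+$ brackets in the formulas of Proposition \ref{prop: sigma formula}, which simplify cleanly in the range $n\geq 2$. The only mildly delicate points are (i) ensuring that the small cases $n=2,3$ are handled by the correct branch of the greedy element definition (the Remark for $n=2$, a direct enumeration for $n=3$), and (ii) noticing that both parities must be started independently because the shift in the inductive step has step $2$.
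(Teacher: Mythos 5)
Your proposal is correct and follows essentially the same route as the paper: both establish the base cases $x_2=x[0,-1]$, $x_3=x[1,0]$ and then propagate by applying $\sigma_2\circ\sigma_1$ with the $[\,\cdot\,]_+$ brackets simplifying for $n\geq 2$, so that $x_{n+2}=\sigma_2(\sigma_1(x[n-2,n-3]))=x[n,n-1]$. The only cosmetic difference is that you verify $x_3=x[1,0]$ by enumerating compatible pairs on $\mathcal{D}^{1\times 0}$, whereas the paper obtains it as $\sigma_2(x[-1,0])$ from $x_1=x[-1,0]$.
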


\begin{proof}
By Proposition \ref{prop: sigma formula},
\[ \sigma_1(x[a_1,a_2])=x[a_1, 2[a_1]_{+} -a_2] \text{ and } \sigma_2(x[a_1,a_2])=x[2[a_2]_{+} -a_1,a_2].\]
For the base cases, $x_1= x_1^1x_2^0=x[-1,0]$ and $x_2=x_1^0x_2^1=x[0,-1]$, thus
\[x_3=\sigma_2(x_1)=\sigma_2(x[-1,0])=x[1,0].\]
Suppose that $x_k=x[k-2,k-3]$ for any $k \leq n+1$, then:
\[
\begin{aligned}
x_{n+2} & = \sigma_2 (\sigma_1 (x_n))\\
& = \sigma_2 (\sigma_1(x[n-2,n-3]))\\
&= \sigma_2(x[n-2,n-1 ])\\
&= x[n,n-1].
\end{aligned}
\]
Hence the statement is true by induction.
\end{proof}

The maximal Dyck path $\mathcal{D}^{(n+1) \times n}$ is determined as in Figure \ref{fig: compatible}, where we label its vertices by $A_0, A_1,...A_n, A_{n+1}\equiv A_0$, and $B_0 \equiv A_1, B_1,B_2,...,B_n$. Let $\mathcal{D}_1=\{u_0,u_1,...,u_n\}, \mathcal{D}_2=\{v_1,v_2,...,v_n\}$ be the sets of horizontal and vertical edges respectively on the maximal Dyck path. Note that
\Eq{
u_i&=A_iB_i,\quad\quad 0\leq i\leq n,\\
v_j&=B_jA_{j+1},\quad\quad 1\leq j\leq n.
}
For $S_1 \subset \mathcal{D}_1, S_2 \subset \mathcal{D}_2$, to facilitate the calculation we define the following functions.

\begin{definition}
For $C \in (AB)^{\circ}$, the \emph{step functions} are defined by
\Eq{
    f(C,AB) &:= |(AC)_2| - 2|(AC)_1 \cap S_1|,\\
    g(C,AB) &:= |(CB)_1| - 2|(CB)_2 \cap S_2|.
}
\end{definition}
\begin{proposition} \label{prop: max}
In the maximal Dyck path $\mathcal{D}^{(n+1) \times n}$, $(S_1,S_2)$ is a compatible pair if and only if for any $u_i \in S_1, v_j \in S_2$:
\begin{equation}\label{maxeq}
    \max_{C \in (A_iA_{j+1})^{\circ}} \{f(C,A_iA_{j+1}), g(C,A_iA_{j+1})\} \geq 0.
\end{equation}
\end{proposition}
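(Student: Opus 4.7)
The plan is to prove the stated equivalence by a discrete intermediate value argument applied to the step functions $f$ and $g$. The forward direction is immediate from Definition~\ref{compatibledef}: when $(S_1, S_2)$ is compatible, for each $u_i \in S_1$ and $v_j \in S_2$ there exists $C \in (A_iA_{j+1})^\circ$ with $|(A_iC)_2| = 2|(A_iC)_1 \cap S_1|$ or $|(CA_{j+1})_1| = 2|(CA_{j+1})_2 \cap S_2|$, i.e.\ $f(C, A_iA_{j+1}) = 0$ or $g(C, A_iA_{j+1}) = 0$, so in particular $\max(f(C), g(C)) \geq 0$ at this $C$.

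For the backward direction, I first record how $f$ and $g$ change as $C$ advances one edge in the NE direction along the subpath: $f$ increases by $1$ at each vertical edge, decreases by $2$ at each horizontal edge in $S_1$, and is unchanged at each horizontal edge outside $S_1$; symmetrically, $g$ decreases by $1$ at each horizontal edge, increases by $2$ at each vertical edge in $S_2$, and is unchanged at each vertical edge outside $S_2$. Crucially, the only positive step size of $f$ is $+1$ and the only negative step size of $g$ is $-1$. Next, since the subpath begins with the edge $u_i \in S_1$, the first interior lattice point $B_i$ satisfies $f(B_i) = -2$; symmetrically, since the subpath ends with $v_j \in S_2$, the last interior lattice point $B_j$ satisfies $g(B_j) = -2$. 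The hypotheses $u_i \in S_1$ and $v_j \in S_2$ (in particular $j \geq 1$) together with the index conventions ensure that both $B_i$ and $B_j$ lie in $(A_iA_{j+1})^\circ$.

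Now suppose some $C^* \in (A_iA_{j+1})^\circ$ achieves $\max(f(C^*), g(C^*)) \geq 0$. If $f(C^*) \geq 0$, then the sequence of $f$-values along the interior lattice points from $B_i$ to $C^*$ runs from $-2$ up to a nonnegative integer while rising at most $+1$ per step, so there must exist a step on which $f$ transitions from $-1$ to $0$, producing some $C' \in (A_iA_{j+1})^\circ$ with $f(C') = 0$. If instead $g(C^*) \geq 0$, the mirror argument running from $C^*$ forward to $B_j$ sees $g$ falling from a nonnegative value to $-2$; since its only negative step is $-1$, the first step on which $g$ becomes negative must transition from $0$ to $-1$, giving $C' \in (A_iA_{j+1})^\circ$ with $g(C') = 0$. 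In either case Definition~\ref{compatibledef} is satisfied, completing the backward direction. The main care point will be confirming interior-membership of $B_i$ and $B_j$ and handling subpaths that wrap around the identified corner $A_{n+1} \equiv A_0$; since the step analysis is entirely local to the edge structure, the wrap-around causes no essential difficulty.
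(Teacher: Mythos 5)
Your proposal is correct and follows essentially the same route as the paper's proof: the forward direction read off directly from Definition~\ref{compatibledef} (with $b=c=2$), and the backward direction via the boundary values $f(B_i)=g(B_j)=-2$ together with a discrete intermediate-value argument using that the only positive increment of $f$ (going NE) is $+1$ and the only negative increment of $g$ is $-1$. Your step-by-step bookkeeping of the increments and the explicit remark about interior membership of $B_i$, $B_j$ and wrap-around subpaths are just a more detailed rendering of the same argument.
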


\begin{proof}
By Definition \ref{compatibledef}, $(S_1,S_2)$ is a compatible pair if and only if there exists $C \in (A_iA_{j+1})^{\circ}$ such that $f(C, A_iA_{j+1})=0$ or $g(C, A_iA_{j+1})=0$. 

In particular if $(S_1,S_2)$ is a compatible pair, then condition \eqref{maxeq} clearly holds.

Now assume the condition \eqref{maxeq} holds. We observe that if $u_i \in S_1, v_j \in S_2$ then $$f(B_i,A_iA_{j+1})=g(B_j,A_iA_{j+1})=-2$$ when $C\in(A_iA_{j+1})^\circ$ is nearest to $A_i$ or $A_{j+1}$. We also note that
\begin{itemize}
    \item if $A_k,B_k \in (A_iA_{j+1})^{\circ}$ where $0 \leq k \leq n$, then 
    \[f(B_k, A_iA_{j+1})-f(A_k, A_iA_{j+1}) \in \{0,-2\}, \]
    \item if $B_k,A_{k+1} \in (A_iA_{j+1})^{\circ}$ where $1 \leq k \leq n$, then
    \[f(A_{k+1}, A_iA_{j+1})-f(B_k, A_iA_{j+1})=1. \]
\end{itemize}
Thus, $f$ may increase by no more than 1 when $C$ travels from $B_i$ to $B_j$ in the Northeast direction. Similarly, 
\begin{itemize}
    \item if $A_k,B_k \in (A_iA_{j+1})^{\circ}$ for some $0 \leq k \leq n$, then 
    \[g(A_k, A_iA_{j+1})-g(B_k, A_iA_{j+1})=1, \]
    \item if $B_k,A_{k+1} \in (A_iA_{j+1})^{\circ}$ for some $1 \leq k \leq n$, then
    \[g(B_k, A_iA_{j+1})-g(A_{k+1}, A_iA_{j+1}) \in \{0,-2\}. \]
\end{itemize}
Thus, $g$ may increase by no more than 1 when $C$ travels from $B_j$ back to $B_i$ in the Southwest direction. 

Since $f$ may increase by no more than 1, if $f(C, A_iA_{j+1})>0$, there exists $D \in (A_iC)^{\circ}$ such that $f(D, A_iA_{j+1})=0$. Similarly since $g$ may decrease by no more than $1$ when $C$ travels backward, if $g(C, A_iA_{j+1})>0$ for some $C \in (A_iA_{j+1})^{\circ}$, there exists $D \in (CA_{j+1})^{\circ}$ such that $g(D, A_iA_{j+1})=0$. In other words $(S_1, S_2)$ is a compatible pair. 
\end{proof}

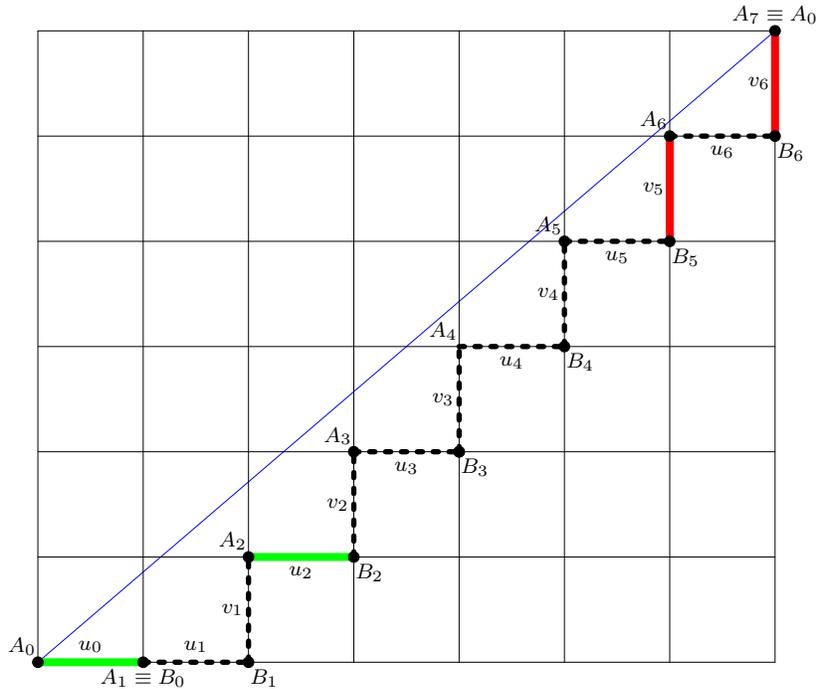
\begin{figure}[H]
\centering
\begin{tikzpicture}[line cap=round,line join=round,>=triangle 45,x=0.7cm,y=0.7cm]
    \draw[line width=0.2pt][color=blue] (0,0) -- (14,12);

    \draw [line width=0.2pt] (0,0) -- (14,0);
    \draw [line width=0.2pt] (0,2) -- (14,2);
    \draw [line width=0.2pt] (0,4) -- (14,4);
    \draw [line width=0.2pt] (0,6) -- (14,6);
    \draw [line width=0.2pt] (0,8) -- (14,8);
    \draw [line width=0.2pt] (0,10) -- (14,10);
    \draw [line width=0.2pt] (0,12) -- (14,12);
    \draw [line width=0.2pt] (0,0) -- (0,12);
    \draw [line width=0.2pt] (2,0) -- (2,12);
    \draw [line width=0.2pt] (4,0) -- (4,12);
    \draw [line width=0.2pt] (6,0) -- (6,12);
    \draw [line width=0.2pt] (8,0) -- (8,12);
    \draw [line width=0.2pt] (10,0) -- (10,12);
    \draw [line width=0.2pt] (12,0) -- (12,12);
    \draw [line width=0.2pt] (14,0) -- (14,12);

    \draw [line width=3pt, color=green] (0,0) -- (2,0);
    \draw [line width=2pt, loosely dotted] (2,0) -- (4,0);
    \draw [line width=2pt, loosely dotted] (4,0) -- (4,2);
    \draw [line width=3pt, color=green] (4,2) -- (6,2);
    \draw [line width=2pt, loosely dotted] (6,2) -- (6,4);
    \draw [line width=2pt, loosely dotted] (6,4) -- (8,4);
    \draw [line width=2pt, loosely dotted] (8,4) -- (8,6);
    \draw [line width=2pt, loosely dotted] (8,6) -- (10,6);
    \draw [line width=2pt, loosely dotted] (10,6) -- (10,8);
    \draw [line width=2pt, loosely dotted] (10,8) -- (12,8);
    \draw [line width=3pt, color=red] (12,8) -- (12,10);
    \draw [line width=2pt, loosely dotted] (12,10) -- (14,10);
    \draw [line width=3pt, color=red] (14,10) -- (14,12);

\begin{scriptsize}
    \draw [color=black] (1,0.3) node {$u_{0}$};
    \draw [color=black] (3,0.3) node {$u_{1}$};
    \draw [color=black] (5,1.7) node {$u_{2}$};
    \draw [color=black] (7,3.7) node {$u_{3}$};
    \draw [color=black] (9,5.7) node {$u_{4}$};
    \draw [color=black] (11,7.7) node {$u_{5}$};
    \draw [color=black] (13,9.7) node {$u_{6}$};

    \draw [color=black] (3.7,1) node {$v_{1}$};
    \draw [color=black] (5.7,3) node {$v_{2}$};
    \draw [color=black] (7.7,5) node {$v_{3}$};
    \draw [color=black] (9.7,7) node {$v_{4}$};
    \draw [color=black] (11.7,9) node {$v_{5}$};
    \draw [color=black] (13.7,11) node {$v_{6}$};

    \draw [fill=black] (0,0) circle (2pt);
    \draw[color=black] (-0.3,0.3) node {$A_{0}$};
    \draw [fill=black] (2,0) circle (2pt);
    \draw[color=black] (2,-0.3) node {$A_{1} \equiv B_0$};
    \draw [fill=black] (4,2) circle (2pt);
    \draw[color=black] (3.7,2.3) node {$A_{2}$};
    \draw [fill=black] (6,4) circle (2pt);
    \draw[color=black] (5.7,4.3) node {$A_{3}$};
    \draw [fill=black] (8,4) circle (2pt);
    \draw[color=black] (7.7,6.3) node {$A_{4}$};
    \draw [fill=black] (10,8) circle (2pt);
    \draw[color=black] (9.7,8.3) node {$A_{5}$};
    \draw [fill=black] (12,10) circle (2pt);
    \draw[color=black] (11.7,10.3) node {$A_{6}$};
    \draw [fill=black] (14,12) circle (2pt);
    \draw[color=black] (14,12.3) node {$A_{7} \equiv A_{0}$};

    \draw [fill=black] (4,0) circle (2pt);
    \draw[color=black] (4.3,-0.3) node {$B_{1}$};
    \draw [fill=black] (6,2) circle (2pt);
    \draw[color=black] (6.3,1.7) node {$B_{2}$};
    \draw [fill=black] (8,4) circle (2pt);
    \draw[color=black] (8.3,3.7) node {$B_{3}$};
    \draw [fill=black] (10,6) circle (2pt);
    \draw[color=black] (10.3,5.7) node {$B_{4}$};
    \draw [fill=black] (12,8) circle (2pt);
    \draw[color=black] (12.3,7.7) node {$B_{5}$};
    \draw [fill=black] (14,10) circle (2pt);
    \draw[color=black] (14.3,9.7) node {$B_{6}$};

\end{scriptsize}
\end{tikzpicture}
\caption{A compatible pair on the maximal Dyck path $\mathcal{D}^{7 \times 6}$ with solid edges. (The colors will be explained in Example \ref{ex: phi}.) }
\label{fig: compatible}
\end{figure}

\begin{lemma} \label{lem: j-i}
$(S_1,S_2)$ is a compatible pair in $\mathcal{D}^{(n+1) \times n}$ if and only if
\begin{equation} \label{eq: empty}
    \{j-i|u_i \in S_1, v_j \in S_2 \} \cap \{0,1\} = \emptyset.
\end{equation}
\end{lemma}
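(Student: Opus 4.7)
My plan is to use Proposition \ref{prop: max} to reformulate compatibility in terms of the step functions $f,g$ along the subpath $A_iA_{j+1}$, and then to verify or refute the existence of an interior point with $\max\{f,g\}\geq 0$ depending on the value of $j-i$. The key tool is the walk description: as one moves along the subpath, $f(A_m)$ changes by $+1$ at every vertical edge and by $-2$ at every horizontal edge in $S_1$ (and $0$ otherwise), and $g$ admits an analogous description going backward from $A_{j+1}$. In particular $f(A_i)=0$, $f(A_{i+1})=-1$ (since $u_i\in S_1$), and $g(A_{j+1})=0$, $g(A_j)=-1$ (since $v_j\in S_2$).

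For the ($\Rightarrow$) direction I argue by contrapositive. If $u_i\in S_1$ and $v_j\in S_2$ with $j-i\in\{0,1\}$, the interior of $A_iA_{j+1}$ contains at most three lattice points, and direct evaluation shows $f,g\leq -1$ at each of them, so Proposition \ref{prop: max} fails.

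For the ($\Leftarrow$) direction I fix a pair with $j-i\notin\{0,1\}$ and suppose for contradiction that $f,g\leq -1$ at every interior lattice point. In the direct case $j\geq i+2$, evaluating at $C=A_j$ and $C=A_{i+1}$ yields the lower bounds
\[ N := |S_1\cap\{u_i,\dots,u_{j-1}\}| \geq \tfrac{j-i+1}{2}, \qquad M := |S_2\cap\{v_{i+1},\dots,v_j\}| \geq \tfrac{j-i+1}{2}. \]
On the other hand, the hypothesis $u_k\in S_1\Rightarrow v_k,v_{k+1}\notin S_2$ shows that the $N$ indices $\{k+1:u_k\in S_1,\ i\leq k\leq j-1\}$ are distinct elements of $[i+1,j]$ forbidden from $S_2$, whence $M\leq (j-i)-N$. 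Combining gives $j-i+1\leq j-i$, a contradiction. Hence some interior point $C$ has $f(C)\geq 0$ or $g(C)\geq 0$, and the $\pm 1$ walk structure between consecutive $A$-points, starting from $-1$ at the first interior point, forces the value $0$ to be attained at some interior lattice point, verifying Proposition \ref{prop: max}.

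The main obstacle will be the wrap-around case $j+1\leq i$ (in particular the full-loop case $j=i-1$) and the endpoint case $i=0$, in which the subpath picks up the extra horizontal edge $u_0$ and loses the symmetry of the direct case. I plan to adapt the counting by splitting the subpath at the interior point $A_0$ and tracking a bound of the form $|S_1\cap\text{subpath}|+|S_2\cap\text{subpath}|\leq n-k+\mathbf{1}_{u_0\in S_1}+\mathbf{1}_{u_n\in S_1}$, where $k$ counts the runs of consecutive indices in $S_1$; the residual single-run case with both $u_0,u_n\in S_1$ is excluded by the constraint that $u_{i-1}\notin S_1$, which is forced by $v_{i-1}\in S_2$ in the relevant subcases.
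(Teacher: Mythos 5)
Your overall strategy is the same as the paper's: reduce compatibility to the condition of Proposition \ref{prop: max}, check the short subpaths directly for the forward direction, and argue the converse by contradiction through a counting of $S_1$- and $S_2$-edges along $A_iA_{j+1}$. Your treatment of the direct case $j\geq i+2$ is correct and in fact slightly slicker than the paper's: instead of forcing every unit segment $A_kA_{k+1}$ to carry exactly one marked edge and then locating an empty segment via a minimal index, you inject the set $\{k+1: u_k\in S_1,\ i\le k\le j-1\}$ into the vertical indices excluded from $S_2$ by \eqref{eq: empty}, getting $N+M\le j-i$ against $N+M\ge j-i+1$. (One small inaccuracy: for $i=0$ the subpath $A_0A_j$ has only $j-1$ vertical edges, so the bound is $N\ge\tfrac{j-i}{2}$ rather than $\tfrac{j-i+1}{2}$; the contradiction still survives by integrality, as the paper notes parenthetically, but your stated bound is not literally correct there.)

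The genuine gap is the wrap-around case $i>j$, which in the paper is a full second case of the proof (the counting with the loop convention, the conclusion that every unit segment on the wrapped subpath carries exactly one marked edge, hence $u_0,u_1\in S_1$ and $v_1\notin S_2$, and then a reduction to Case 1 via the pair $(u_0,v_j)$ with $j\ge 2$). You correctly identify this as the main obstacle, but you only offer a plan: the proposed inequality $|S_1\cap\text{subpath}|+|S_2\cap\text{subpath}|\le n-k+\text{(corrections for }u_0,u_n\text{)}$ is neither derived nor precisely formulated (what the runs $k$ range over, and why such a bound beats the lower bound $n+j-i+2$ coming from $f(A_j)\le -1$ and $g(A_{i+1})\le -1$, is not shown), and the exclusion of the ``single-run'' subcase is asserted on the basis of $v_{i-1}\in S_2$, which is not part of the hypotheses for a general pair with $j<i-1$. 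The wrap-around case is exactly where your Case-1 injection $k\mapsto k+1$ breaks down ($u_n$ has no $v_{n+1}$ to forbid, and $u_0$ has no $v_0$), so it cannot be waved through by analogy; as it stands the converse direction is only proved for pairs with $j>i$, and the argument needs to be completed, e.g.\ along the lines of the paper's Case 2 reduction.
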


\begin{proof}
One direction is clear:
\begin{itemize}
    \item if $u_i \in S_1,v_i \in S_2$ for some $1 \leq i \leq n$, then $(A_iA_{i+1})^{\circ}=\{B_i\}$. However,
    \[f(B_i,A_iA_{i+1})=g(B_i,A_iA_{i+1})=-2,\]
    so $(S_1,S_2)$ is not compatible by Proposition \ref{prop: max};
    \item if $u_i \in S_1, v_{i+1} \in S_2$ for some $0 \leq i \leq n-1$ then for any $C \in (A_iA_{i+2})^{\circ}=\{B_i,A_{i+1},B_{i+1}\}$,
    \[f(C,A_iA_{i+2})<0 \text{ and } g(C,A_iA_{i+2})<0\]
    so $(S_1,S_2)$ is not compatible again by Proposition \ref{prop: max}.
\end{itemize}
Conversely, if 
\[\{j-i|u_i \in S_1, v_j \in S_2 \} \cap \{0,1\} = \emptyset \]
then for any $u_i \in S_1,v_j \in S_2$, either $j-i>1$ or $i>j$. Suppose on the contrary that both
\[f(A_j,A_iA_{j+1}) \leq -1 \text{ and } g(A_{i+1},A_iA_{j+1}) \leq -1.\]

\textbf{Case 1:} Let $u_i\in S_1$ and $v_j\in S_2$ with $j-i>1$. Then we have
\[
\begin{aligned}
    |(A_iA_j)_1 \cap S_1|
    & = \frac{1}{2} \bigg( |(A_iA_j)_2| - f(A_j,A_iA_{j+1})  \bigg)\\
    & \geq \frac{1}{2} \bigg( (j-i-1) - (-1) \bigg)\\
    & = \frac{1}{2}(j-i),
\end{aligned}
\]
(note that $|(A_iA_j)_2|=j-i-1$ when $i=0$) and
\[
\begin{aligned}
    |(A_{i+1}A_{j+1})_2 \cap S_2|
    & = \frac{1}{2} \bigg( |(A_{i+1}A_{j+1})_1| - g(A_{i+1},A_iA_{j+1})  \bigg)\\
    & \geq \frac{1}{2} \bigg( (j-i) - (-1) \bigg)\\
    & = \frac{1}{2}(j-i+1).
\end{aligned}
\]
Thus
\[|A_iA_{j+1} \cap (S_1 \cup S_2)| \geq |(A_iA_j)_1 \cap S_1|+ |(A_{i+1}A_{j+1})_2 \cap S_2| \geq j-i + \frac{1}{2}\]
which implies
\[|A_iA_{j+1} \cap (S_1 \cup S_2)| \geq j-i+1.\]
On the other hand, $|A_kA_{k+1} \cap (S_1 \cup S_2)| \leq 1$ because of \eqref{eq: empty} for any $i \leq k \leq j$, so that
\[|A_iA_{j+1} \cap (S_1 \cup S_2)| \leq j-i+1.\]
Therefore $|A_iA_{j+1} \cap (S_1 \cup S_2)| = j-i+1$ and we must require $|A_kA_{k+1} \cap (S_1 \cup S_2)| = 1$ for any $i \leq k \leq j$ for this to hold.

However, if we take the smallest index $k \in [i+1,j]$ such that $v_{k} \in S_2$, which exists by assumption, then $u_{k-1} \notin S_1$ and $v_{k} \notin S_2$, which implies $|A_{k-1}A_{k} \cap (S_1 \cup S_2)|=0$, a contradiction.\\

\textbf{Case 2:} Let $u_i\in S_1$ and $v_j\in S_2$ with $i>j$. Then similarly we have

\[|(A_iA_j)_1 \cap S_1| \geq \frac{1}{2}(n+j-i+1)\]
and
\[|(A_{i+1}A_{j+1})_2 \cap S_2| \geq \frac{1}{2}(n+j-i+2).\]
(Recall that by convention we loop back from $(n+1,n)$ to $(0,0)$ when we consider the subpaths in this case.)

Therefore
\[|A_iA_{j+1} \cap (S_1 \cup S_2)| \geq n+j-i+2 .\]
On the other hand, by \eqref{eq: empty} again, $|A_kA_{k+1} \cap (S_1 \cup S_2)| \leq 1 $ for any $k \geq i$ or $k \leq j$, so
\[|A_iA_{j+1} \cap (S_1 \cup S_2)| \leq n+j-i+2 .\]
Therefore $|A_iA_{j+1} \cap (S_1 \cup S_2)| = n+j-i+2$ and we must again require $|A_kA_{k+1} \cap (S_1 \cup S_2)| = 1$ for any $k \geq i$ or $k \leq j$ for this to hold. In particular, $$|A_0A_2 \cap (S_1 \cup S_2)|=2$$ which implies $u_0,u_1 \in S_1$ and $v_1\notin S_2$ by \eqref{eq: empty}. Then taking $u_0\in S_1$ and $v_j\in S_2$ with $j\geq 2$ we reduce to Case 1.

Therefore we conclude that for any $u_i \in S_1,v_j \in S_2$:
\[f(A_j,A_iA_{j+1}) \geq 0 \quad\text{ or }\quad g(A_{i+1},A_iA_{j+1}) \geq 0\]
which implies $(S_1, S_2)$ is a compatible pair by Proposition \ref{prop: max}.
\end{proof}

Finally, we recall a well-known fact that
\begin{lemma}\label{oddFib} 
The odd-indexed Fibonacci sequence satisfies the recurrence relation
\Eq{F_{2n+1}=3F_{2n-1}-F_{2n-3},\quad n>1}
with initial terms $F_1=1, F_3=2, F_5=5$.
\end{lemma}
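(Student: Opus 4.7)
The plan is to derive the recurrence $F_{2n+1} = 3F_{2n-1} - F_{2n-3}$ directly from the standard Fibonacci recurrence $F_n = F_{n-1} + F_{n-2}$ by iterated substitution, eliminating the even-indexed terms.

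First, I would apply the Fibonacci recurrence twice to expand $F_{2n+1}$:
\[
F_{2n+1} = F_{2n} + F_{2n-1} = (F_{2n-1} + F_{2n-2}) + F_{2n-1} = 2F_{2n-1} + F_{2n-2}.
\]
Next, I would rewrite $F_{2n-2}$ using the Fibonacci recurrence in the form $F_{2n-2} = F_{2n-1} - F_{2n-3}$ (obtained by rearranging $F_{2n-1} = F_{2n-2} + F_{2n-3}$). Substituting this in gives
\[
F_{2n+1} = 2F_{2n-1} + F_{2n-1} - F_{2n-3} = 3F_{2n-1} - F_{2n-3},
\]
which is the desired recurrence, valid for all $n > 1$ (so that $F_{2n-3}$ is well-defined with positive index).

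Finally, the initial terms follow immediately from computing $F_1 = 1$, $F_2 = 1$, $F_3 = 2$, $F_4 = 3$, $F_5 = 5$ directly from the Fibonacci definition. Since the derivation above is a routine two-line manipulation, there is no genuine obstacle here; the content of the lemma is purely bookkeeping, and its role in the paper is to set up the counting arguments for compatible pairs and nondecreasing Dyck paths in Sections \ref{sec4} and \ref{sec5}.
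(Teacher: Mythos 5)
Your derivation is correct: eliminating the even-indexed terms via $F_{2n+1}=2F_{2n-1}+F_{2n-2}$ and $F_{2n-2}=F_{2n-1}-F_{2n-3}$ gives the recurrence, and the index restriction $n>1$ and initial values are handled properly. The paper itself states this lemma as a well-known fact and supplies no proof, so your two-line argument is exactly the standard justification it implicitly relies on; there is nothing to compare beyond that.
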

Let $T_n$ be the set of all compatible pairs on $\mathcal{D}^{(n+1) \times n}$.
\begin{theorem}\label{compFib}
We have the following recurrence relation for $n>1$:
\begin{equation}
    |T_n|=3|T_{n-1}|-|T_{n-2}|.
\end{equation}
In particular the number of compatible pairs in $\mathcal{D}^{(n+1) \times n}$ is $F_{2n+3}$.
\end{theorem}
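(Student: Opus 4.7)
The plan is to recognize $T_n$ as the set of independent sets of a path graph on $2n+1$ vertices; once this is done, both the recurrence and the closed form become routine.

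First, I would apply Lemma \ref{lem: j-i}, which characterizes compatible pairs by the condition that no $u_i \in S_1$ and $v_j \in S_2$ satisfy $j - i \in \{0, 1\}$. Listing the edges of $\mathcal{D}^{(n+1) \times n}$ in the order they appear along the Dyck path, namely $u_0, v_1, u_1, v_2, u_2, \ldots, v_n, u_n$, the two forbidden configurations ($j = i$ and $j = i + 1$) correspond exactly to the two kinds of adjacent pairs in this sequence. Hence $S_1 \cup S_2$ is compatible if and only if it is an independent set in the path graph $P_{2n+1}$ whose vertices are these $2n+1$ edges and whose edges join consecutive entries of the sequence. In particular, $|T_n|$ equals the number of independent sets of $P_{2n+1}$.

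Next, I would invoke the standard recurrence $\alpha_k = \alpha_{k-1} + \alpha_{k-2}$ for the number $\alpha_k$ of independent sets in $P_k$, obtained by conditioning on whether the last vertex is selected, with base cases $\alpha_0 = 1$ and $\alpha_1 = 2$. Iterating this twice, I would compute
\[ \alpha_{2n+1} \;=\; \alpha_{2n} + \alpha_{2n-1} \;=\; 2\alpha_{2n-1} + \alpha_{2n-2} \;=\; 3\alpha_{2n-1} - \alpha_{2n-3}, \]
where the last equality uses $\alpha_{2n-2} = \alpha_{2n-1} - \alpha_{2n-3}$, also a consequence of the Fibonacci recursion. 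Since $|T_n| = \alpha_{2n+1}$, this establishes $|T_n| = 3|T_{n-1}| - |T_{n-2}|$ for $n > 1$.

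Finally, a direct enumeration gives $|T_0| = 2 = F_3$ and $|T_1| = 5 = F_5$, matching the values of $F_{2n+3}$ at $n = 0, 1$. Combined with the recurrence just established and Lemma \ref{oddFib}, induction yields $|T_n| = F_{2n+3}$ for all $n \geq 0$. The only step that requires real thought is the translation via Lemma \ref{lem: j-i}: one must verify that adjacency in the sequence $u_0, v_1, u_1, \ldots, v_n, u_n$ corresponds precisely to the forbidden index differences $\{0, 1\}$. After that hurdle, the remainder is a standard Fibonacci computation with no surprises.
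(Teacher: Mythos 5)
Your proof is correct, and it takes a genuinely different route from the paper's. You use Lemma \ref{lem: j-i} to identify a compatible pair $(S_1,S_2)$ with the subset $S_1\cup S_2$ of the $2n+1$ edges, and observe that the forbidden differences $j-i\in\{0,1\}$ are exactly the adjacencies in the path $u_0 - v_1 - u_1 - v_2 - \cdots - v_n - u_n$; hence $|T_n|$ is the number of independent sets of a path on $2n+1$ vertices, which is $F_{2n+3}$ on the nose, and the three-term recurrence $|T_n|=3|T_{n-1}|-|T_{n-2}|$ drops out of the Fibonacci recursion. The paper instead argues directly on the compatible pairs: it partitions $T_n=O_n\sqcup U_n\sqcup V_n$ according to the status of $u_n$ and $v_n$, uses Lemma \ref{lem: j-i} to get $|O_n|=|U_n|=|T_{n-1}|$ and $|V_n|=|O_{n-1}|+|V_{n-1}|$, deduces the recurrence, and only then obtains $F_{2n+3}$ from the base cases and Lemma \ref{oddFib}. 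The two arguments are close in mechanism --- your conditioning on the last vertex of the path is essentially the paper's case analysis in disguise --- but your independent-set abstraction delivers the Fibonacci value immediately and makes the recurrence a corollary rather than the main step, which is a nice economy. One small inaccuracy: the sequence $u_0,v_1,u_1,\ldots,v_n,u_n$ is \emph{not} the order in which the edges occur along the maximal Dyck path (that order is $u_0,u_1,v_1,u_2,v_2,\ldots,u_n,v_n$, and adjacency in it would include harmless pairs like $\{u_0,u_1\}$ while missing none of the needed structure only by accident); this mislabeling is harmless because your argument uses only the explicitly listed sequence, whose consecutive pairs are precisely the forbidden pairs $\{v_i,u_i\}$ (difference $0$) and $\{u_i,v_{i+1}\}$ (difference $1$), but the geometric description should be dropped or corrected.
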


\begin{proof}
Define the following subsets of $T_n$ :
\[O_n := \{(S_1,S_2)| u_n \notin S_1, v_n \notin S_2 \},\]
\[U_n := \{(S_1,S_2)| u_n \in S_1, v_n \notin S_2 \},\]
\[V_n := \{(S_1,S_2)| u_n \notin S_1, v_n \in S_2 \}.\]
Note that if $u_n \in S_1, v_n \in S_2$, then $(S_1,S_2)$ cannot be a compatible pair. Hence $$T_n = O_n \sqcup U_n \sqcup V_n.$$ From Lemma \ref{lem: j-i}, we see that $|O_n|=|U_n|=|T_{n-1}|$ and $|V_n|=|O_{n-1}|+|V_{n-1}|$. Applying the two relations above, we get:
\[
\begin{aligned}
|T_n| & = |O_n|+|U_n|+|V_n|\\
&=|T_{n-1}|+|T_{n-1}|+(|O_{n-1}|+|V_{n-1}|)\\
&=2|T_{n-1}|+(|T_{n-1}|-|U_{n-1}|)\\
&=3|T_{n-1}|-|T_{n-2}|.
\end{aligned}
\]
Since the initial terms $|T_0|=F_3=2$ and $|T_1|=F_5=5$, by Lemma \ref{oddFib} the theorem is proven.
\end{proof}

\section{Nondecreasing Dyck paths of length $2n$}
\label{sec5}

In this section, we will count the number of the so-called \emph{nondecreasing Dyck paths} of length $2n$. For further details, the reader is referred to \cite{deutsch2003bijection}.\\

\begin{definition}
A \emph{Dyck path of length $2n$} is a discrete path consisting of diagonal segments going from $(0,0)$ to $(2n,0)$ in the first quadrant $\{(x,y): x,y\in\mathbb{Z}_{\geq 0}\}$, such that each step equals to a vector $\vec{u}:=(1,1)$ or $\vec{v}:=(1,-1)$. 
\end{definition}

In particular a Dyck path $w$ of length $2n$ is equivalent to a sequence of $2n$ vectors $\vec{w}_1, \vec{w}_2,...,\vec{w}_{2n}$ where $\vec{w}_i$ equals $\vec{u}$ or $\vec{v}$.

\begin{remark} \label{rem: 2 dyck}
The notion of \enquote{Dyck path} here has a different meaning compared to that in Section \ref{sec2.4}. However, they are the same if we rotate the path defined here counterclockwise by 45 degrees. Specifically, a Dyck path of length $2n$ is equivalent to a normal Dyck path on $\mathcal{D}^{n \times n}$ in Section \ref{sec2.4}.
\end{remark}

\begin{definition}
Suppose $(x,y)=\vec{w}_1+...+\vec{w}_i$ for some $1 \leq i < 2n$ is a point on the Dyck path $w$. It is a \emph{peak} if $\vec{w}_i=\vec{u}$ and $\vec{w}_{i+1}=\vec{v}$. Conversely, it is a \emph{valley} if $\vec{w}_i=\vec{v}$ and $\vec{w}_{i+1}=\vec{u}$.
\end{definition}

\begin{definition}
 We call $y$ the \emph{altitude} of a point $(x,y)$. A Dyck path of length $2n$ is \emph{nondecreasing} if, viewed horizontally, the sequence of altitudes of the valleys is nondecreasing.
\end{definition}

\begin{definition}
A \emph{mountain} of a nondecreasing Dyck path $w$ of length $2n$ is the subpath from a valley to the next valley on the right, or from the last valley to the endpoint $(2n,0)$. The \emph{magnitude} of a mountain is a pair of positive integers $(d,e)$ such that $d,e\geq 1$ are the numbers of vectors $\vec{u}$ and $\vec{v}$ in the mountain's representation. 
\end{definition}

\begin{proposition}
A nondecreasing Dyck path can be viewed as a consecutive of $k$ mountains with mountain magnitudes $(d_i,e_i)$ where $d_i \geq e_i$ respectively, except the last mountain where we have $d_k \leq e_k$.
\end{proposition}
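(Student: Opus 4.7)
The plan is a direct decomposition argument driven by the altitudes of valleys. First I would set up the $k$ mountains explicitly, then verify each magnitude is a pair of positive integers, and finally deduce the required inequalities $d_i \geq e_i$ (for $i < k$) and $d_k \leq e_k$ by telescoping altitudes.

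For the setup, let $V_1, V_2, \ldots, V_{k-1}$ be the valleys of $w$ in left-to-right order, with altitudes $y_1 \leq y_2 \leq \cdots \leq y_{k-1}$ guaranteed by the nondecreasing assumption. Define $M_1$ to be the subpath from $(0,0)$ to $V_1$, $M_i$ for $2 \leq i \leq k-1$ to be the subpath from $V_{i-1}$ to $V_i$, and $M_k$ to be the subpath from $V_{k-1}$ to $(2n,0)$; in the degenerate case where $w$ has no valleys, set $k=1$ and take $M_1$ to be the entire path. This partitions $w$ into $k$ consecutive subpaths matching the intended mountain structure.

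Next I would verify that each $(d_i,e_i)$ consists of positive integers. By the very definition of a valley, the step arriving at $V_j$ is $\vec{v}$ and the step leaving it is $\vec{u}$, so each valley-to-valley mountain automatically contains at least one $\vec{u}$ and one $\vec{v}$. For $M_1$ the first step must be $\vec{u}$ (otherwise the path would leave the first quadrant) and its arrival at $V_1$ is by $\vec{v}$, giving $d_1,e_1 \geq 1$; symmetrically $M_k$ begins with $\vec{u}$ out of $V_{k-1}$ and must end with $\vec{v}$ in order to return to altitude $0$, so $d_k,e_k \geq 1$.

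The inequalities then follow by telescoping altitudes: the net altitude change across a subpath with magnitude $(d_i,e_i)$ is exactly $d_i - e_i$. Using the convention $y_0 := 0$, for $1 \leq i \leq k-1$ one obtains $d_i - e_i = y_i - y_{i-1} \geq 0$ by the nondecreasing property of the $y_j$, while for the last mountain $d_k - e_k = 0 - y_{k-1} \leq 0$. The degenerate case $k=1$ fits the same statement with $d_1=e_1=n$. I do not anticipate any substantive obstacle; the only care needed is in handling the two boundary mountains $M_1$ and $M_k$, whose roles differ slightly from the generic valley-to-valley pieces.
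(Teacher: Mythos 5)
Your proof is correct and follows essentially the same route as the paper: the paper also argues via altitudes, noting that nondecreasing valley altitudes give $d_i \geq e_i$ and that the nonnegativity of the last valley's altitude gives $d_k \leq e_k$. Your version merely spells out the telescoping identity $d_i - e_i = y_i - y_{i-1}$ and the boundary/degenerate cases in more detail, which is fine.
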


\begin{proof}
 The altitudes of the valleys are nondecreasing, implying $d_i \geq e_i$. The altitude of the last valley is nonnegative so $d_k \leq e_k$.
\end{proof}

\begin{example}
Figure \ref{fig: dyck} shows an example of a nondecreasing Dyck path whose length of 16 corresponds to consecutive mountains
\begin{equation}
    (d_1,e_1)=(2,2), \quad (d_2,e_2)=(2,2), \quad(d_3,e_3)=(3,1), \quad (d_4,e_4)=(1,3).
\end{equation}
\end{example}

Let $S_n$ be the set of all nondecreasing Dyck paths of length $2n$.

\begin{definition}
We define a disjoint partition $A_n, B_n, C_n$ of $S_n$ as follows:
\begin{itemize}
    \item paths in $A_n$ have the last mountain in the form $(d_k,e_k)$ with $d_k \geq 2$,
    \item paths in $B_n$ have the last two mountain in the form $(d_{k-1},e_{k-1}),(d_k,e_k)$ with $d_{k-1}=e_{k-1}$ and $d_k=1$,
    \item paths in $C_n$ have the last two mountain in the form $(d_{k-1},e_{k-1}),(d_k,e_k)$ with $d_{k-1}>e_{k-1}$ and $d_k=1$.
\end{itemize}
\end{definition}

\begin{figure}[H]
\centering
\begin{tikzpicture}[line cap=round,line join=round,>=triangle 45,x=0.8cm,y=0.8cm]
    \draw [line width=0.2pt] (0,0) -- (16,0);
    \draw [line width=0.2pt] (0,1) -- (16,1);
    \draw [line width=0.2pt] (0,2) -- (16,2);
    \draw [line width=0.2pt] (0,3) -- (16,3);
    \draw [line width=0.2pt] (0,4) -- (16,4);
    \draw [line width=0.2pt] (0,5) -- (16,5);
    \draw [line width=0.2pt] (0,6) -- (16,6);

    \draw [line width=0.2pt] (0,0) -- (0,6);
    \draw [line width=0.2pt] (1,0) -- (1,6);
    \draw [line width=0.2pt] (2,0) -- (2,6);
    \draw [line width=0.2pt] (3,0) -- (3,6);
    \draw [line width=0.2pt] (4,0) -- (4,6);
    \draw [line width=0.2pt] (5,0) -- (5,6);
    \draw [line width=0.2pt] (6,0) -- (6,6);
    \draw [line width=0.2pt] (7,0) -- (7,6);
    \draw [line width=0.2pt] (8,0) -- (8,6);
    \draw [line width=0.2pt] (9,0) -- (9,6);
    \draw [line width=0.2pt] (10,0) -- (10,6);
    \draw [line width=0.2pt] (11,0) -- (11,6);
    \draw [line width=0.2pt] (12,0) -- (12,6);
    \draw [line width=0.2pt] (13,0) -- (13,6);
    \draw [line width=0.2pt] (14,0) -- (14,6);
    \draw [line width=0.2pt] (15,0) -- (15,6);
    \draw [line width=0.2pt] (16,0) -- (16,6);

    \draw [line width=2pt] (0,0) -- (2,2);
    \draw [line width=2pt] (2,2) -- (4,0);
    \draw [line width=2pt] (4,0) -- (6,2);
    \draw [line width=2pt] (6,2) -- (8,0);
    \draw [line width=2pt] (8,0) -- (11,3);
    \draw [line width=2pt] (11,3) -- (12,2);
    \draw [line width=2pt] (12,2) -- (13,3);
    \draw [line width=2pt] (13,3) -- (16,0);

\begin{scriptsize}
    \draw[color=black] (0.7,1.3) node {$d_{1}$};
    \draw[color=black] (3.3,1.3) node {$e_{1}$};
    \draw[color=black] (4.7,1.3) node {$d_{2}$};
    \draw[color=black] (7.3,1.3) node {$e_{2}$};
    \draw[color=black] (9.2,1.8) node {$d_{3}$};
    \draw[color=black] (11.8,2.8) node {$e_{3}$};

\end{scriptsize}
\end{tikzpicture}
\caption{A nondecreasing Dyck path of length 16}
\label{fig: dyck}
\end{figure}
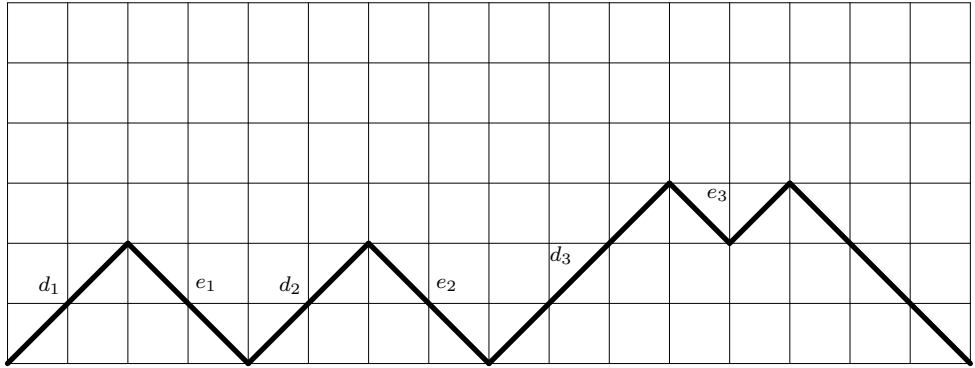

\begin{lemma} \label{lem: ABCrelations}
We have the following recurrence relations:
\Eq{
    |A_n|&=|S_{n-1}|,\\
    |B_n|&=|S_{n-1}|,\\
    |C_n|&= |B_{n-1}|+|C_{n-1}|.
}
\end{lemma}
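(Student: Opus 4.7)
The plan is to establish each of the three identities by an explicit bijection that modifies only the last one or two mountains of the path. Since the rest of the path is untouched, the nondecreasing condition on the earlier valley heights is preserved automatically, so in each case one only needs a short local verification at the tail.

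For $|A_n|=|S_{n-1}|$, I would map $w\in A_n$ with last mountain $(d_k,e_k)$, where $d_k\geq 2$ and hence $e_k\geq d_k\geq 2$, to the path obtained by shrinking this mountain to $(d_k-1,e_k-1)$. Both entries stay $\geq 1$, the condition $d_k-1\leq e_k-1$ survives, the path still ends at $0$, and the length drops by $2$. The inverse simply enlarges the last mountain of any $w'\in S_{n-1}$ by $(+1,+1)$, which automatically lands back in $A_n$.

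For $|B_n|=|S_{n-1}|$, let $h$ denote the height of the valley just before the second-last mountain $(d_{k-1},d_{k-1})$. The condition $d_{k-1}=e_{k-1}$ makes $h$ also the height of the valley preceding the last mountain, so that last mountain is forced to be $(1,1+h)$. I would merge these two into a single mountain $(d_{k-1},d_{k-1}+h)$; it satisfies $d\leq e$ since $h\geq 0$, the path still ends at $0$, and the length drops by $2$. Conversely, any $w'\in S_{n-1}$ with last mountain $(d',e')$ is split into $(d',d')$ followed by $(1,1+(e'-d'))$, recovering an element of $B_n$.

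For $|C_n|=|B_{n-1}|+|C_{n-1}|$, I would first observe that the right-hand side counts exactly the paths in $S_{n-1}$ whose last mountain has $d_k=1$. Given $w\in C_n$ with last two mountains $(d_{k-1},e_{k-1})$ and $(1,e_k)$ where $d_{k-1}>e_{k-1}$, the strict inequality forces $h_{k-1}\geq h_{k-2}+1$ and hence $e_k=1+h_{k-1}\geq 2$. Replace these by $(d_{k-1}-1,e_{k-1})$ and $(1,e_k-1)$: the second-last still has $d\geq e\geq 1$; the new intermediate valley drops in height by exactly $1$, keeping the nondecreasing condition intact; the path ends at $0$; and the length drops by $2$. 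Since $d_k=1$ in the image, it lies in $B_{n-1}$ when $d_{k-1}-1=e_{k-1}$ and in $C_{n-1}$ when $d_{k-1}-1>e_{k-1}$. The inverse expands the last two mountains by $(+1,0)$ and $(0,+1)$, restoring the strict inequality required in $C_n$. The main bookkeeping obstacle, across all three cases, will be checking that the image is always a valid nondecreasing Dyck path of the correct length, which reduces to the short altitude computations sketched above; edge cases such as $w'$ having only one mountain do not occur in the $B_n$ and $C_n$ constructions because the relevant sets $B_{n-1}, C_{n-1}$ are then empty.
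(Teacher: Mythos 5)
Your proposal is correct and takes essentially the same approach as the paper: the same three tail-modifying bijections (shrink the last mountain by $(1,1)$ for $A_n$, merge the last two mountains into $(d_{k-1},d_{k-1}+e_k-1)$ for $B_n$, and decrement $d_{k-1}$ and $e_k$ by one for $C_n$), with the same inverses. Your additional altitude checks (e.g.\ that $e_k\geq 2$ in the $C_n$ case and that the intermediate valley drops by exactly one) merely make explicit verifications the paper leaves implicit.
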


\begin{proof}
Consider the map $A_n \to S_{n-1}$ defined by
\begin{equation}
    \bigg((d_1,e_1),...,(d_k,e_k)\bigg) \mapsto \bigg((d_1,e_1),...,(d_k-1,e_k-1)\bigg).
\end{equation}
It is clearly injective, with the inverse given by
\[\bigg((d_1,e_1),...,(d_k,e_k)\bigg) \mapsto \bigg((d_1,e_1),...,(d_k +1,e_k+1)\bigg)\]
which is well-defined because $d_k+1 \geq 2$.\\

Next, the map $B_n \to S_{n-1}$ defined by
\begin{equation}
    \bigg((d_1,e_1),...,(d_{k-1},e_{k-1}),(1,e_k)\bigg) \mapsto \bigg((d_1,e_1),...,(d_{k-1},d_{k-1}+e_k-1)\bigg)
\end{equation}
is clearly injective with the inverse given by
\[ \bigg((d_1,e_1),...,(d_{k-1},e_{k-1})\bigg) \mapsto \bigg((d_1,e_1),...,(d_{k-1},d_{k-1}),(1,e_{k-1}-d_{k-1}+1)\bigg) \]
which is well-defined because $(d_{k-1},e_{k-1})$ is the last mountain, so $e_{k-1} \geq d_{k-1}$ and
\[e_{k-1}-d_{k-1}+1 \geq 1.\]

Finally consider the map $C_n \to B_{n-1} \sqcup C_{n-1}$ defined by
\begin{equation}
    \bigg((d_1,e_1),...,(d_{k-1},e_{k-1}),(1,e_k)\bigg) \mapsto \bigg((d_1,e_1),...,(d_{k-1}-1,e_{k-1}),(1,e_k-1)\bigg).
\end{equation}
Since $d_{k-1}> e_{k-1} \implies d_{k-1}-1 \geq e_{k-1}$, it is clearly well-defined and injective. The inverse is given by
\[\bigg((d_1,e_1),...,(d_{k-1},e_{k-1}),(1,e_k)\bigg) \mapsto \bigg((d_1,e_1),...,(d_{k-1}+1,e_{k-1}),(1,e_k+1)\bigg),\]
which is well-defined because $d_{k-1}\geq e_{k-1}\implies d_{k-1}+1>e_{k-1}$. 
\end{proof}

\begin{theorem}\label{nonDyckFib}
We have the following recurrence relation for $n>1$:
\begin{equation}
    |S_{n+1}|=3|S_n|-|S_{n-1}|.
\end{equation}
In particular the number of nondecreasing Dyck paths of length $2n$ is $F_{2n-1}$.
\end{theorem}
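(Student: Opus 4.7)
The plan is to eliminate $|C_n|$ from the three recursions in Lemma \ref{lem: ABCrelations} to obtain a single recurrence involving only $|S_n|$. Since $S_n = A_n \sqcup B_n \sqcup C_n$ and $|A_n| = |B_n| = |S_{n-1}|$, we immediately obtain the identity
\[
|C_n| = |S_n| - 2|S_{n-1}|
\]
(valid whenever the partition makes sense, which we expect for $n \geq 2$). Substituting this into $|C_{n+1}| = |B_n| + |C_n| = |S_{n-1}| + |C_n|$ yields
\[
|S_{n+1}| - 2|S_n| = |S_{n-1}| + \bigl(|S_n| - 2|S_{n-1}|\bigr) = |S_n| - |S_{n-1}|,
\]
which rearranges to the desired recurrence $|S_{n+1}| = 3|S_n| - |S_{n-1}|$ for $n > 1$.

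For the second assertion, I will check the base cases $|S_1| = 1 = F_1$ (the only path $\vec u\vec v$) and $|S_2| = 2 = F_3$ (the paths $\vec u\vec v\vec u\vec v$ and $\vec u\vec u\vec v\vec v$) directly. These match the initial values $F_1 = 1$ and $F_3 = 2$ from Lemma \ref{oddFib}, so since $|S_n|$ and $F_{2n-1}$ satisfy the same three-term recurrence and the same initial data, induction gives $|S_n| = F_{2n-1}$ for all $n \geq 1$.

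There is no substantive obstacle here; the argument is essentially a linear elimination. The only subtlety to flag is that the partition $S_n = A_n \sqcup B_n \sqcup C_n$ degenerates at $n=1$ (the unique length-$2$ path $(1,1)$ fits none of the three conditions literally, since $A_n$ requires $d_k \geq 2$ and $B_n, C_n$ require at least two mountains), so the derivation of $|C_n| = |S_n| - 2|S_{n-1}|$ is justified only for $n \geq 2$. This is exactly why the statement restricts to $n > 1$, and it is consistent with the recurrence $|C_{n+1}| = |B_n| + |C_n|$ of Lemma \ref{lem: ABCrelations} when interpreted with the convention $|B_1| = |C_1| = 0$. Once this bookkeeping is in place, the recurrence and the identification with $F_{2n-1}$ follow without further effort.
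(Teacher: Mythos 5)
Your proof is correct and follows essentially the same route as the paper: both arguments are a linear elimination of $|C_n|$ using the three relations of Lemma \ref{lem: ABCrelations}, followed by matching initial data against Lemma \ref{oddFib}. The only difference is cosmetic — you anchor the induction at $|S_1|=F_1$, $|S_2|=F_3$ (with a careful note on the degenerate $n=1$ partition), whereas the paper uses $|S_2|=F_3$, $|S_3|=F_5$ — and either choice is fine.
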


\begin{proof}
Applying Lemma \ref{lem: ABCrelations}, we have:
\[
\begin{aligned}
|S_{n+1}| & = |A_{n+1}|+|B_{n+1}|+|C_{n+1}|\\
&= |S_n|+|S_n| + (|B_n|+|C_n|)\\
&= 2|S_n| +(|S_n| - |A_n|)\\
&= 3|S_n| -|S_{n-1}|.
\end{aligned}
\]
We also have the initial conditions $|S_2|=F_3=2$ and $|S_3|=F_5=5$, hence by Lemma \ref{oddFib} $|S_n|=F_{2n-1}$ as required.
\end{proof}

\section{Bijective correspondences}
\label{sec6}

In this section, we construct explicitly the one-to-one correspondences in both directions among three combinatorial objects shown in Figure \ref{fig: main} including
\begin{itemize}
    \item perfect matchings on the snake graph $G_{T, \gamma_{n+3}}$ of annulus with 2 marked points,
    \item compatible pairs on the maximal Dyck path $\mathcal{D}^{(n+1) \times n}$,
    \item nondecreasing Dyck paths with a length of $2n+4$.
\end{itemize}

\subsection{Perfect matchings and compatible pairs}
\label{sec6.1}

Using the notation from Figure \ref{fig: perfect} and Figure \ref{fig: compatible}, define
\Eq{\label{pertocom}
    \phi: \{\text{perfect matching on } G_{T, \gamma_{n+3}}\} &\to \{\text{compatible pairs on } \mathcal{D}^{(n+1) \times n}\}\\
    P&\mapsto (S_1,S_2)\nonumber
}
by the following rules:
\begin{itemize}
    \item $u_0 \in S_1 \iff A_0A_1,B_0B_1 \in P$
    \item $u_i \in S_1 \iff A_{2i}A_{2i+1},B_{2i}B_{2i+1} \in P, \forall i \in \{1,2,...,n\}$
    \item $v_i \in S_2 \iff A_{2i-1}A_{2i},B_{2i-1}B_{2i} \in P, \forall i \in \{1,2,...,n\}$
\end{itemize}

\begin{example} \label{ex: phi} When $n=6$, consider the following perfect matching on $G_{T, \gamma_{9}}$,
\begin{equation}
    \begin{aligned}
P= & \{A_0A_1,B_0B_1,A_2B_2,A_3B_3,A_4A_5,B_4B_5,A_6B_6,\\
&A_7B_7,A_8B_8,A_9A_{10},B_9B_{10},A_{11}A_{12},B_{11}B_{12},A_{13}B_{13}\}
\end{aligned}
\end{equation}
(see Figure \ref{fig: perfect}). It responds to the compatible pair $(S_1,S_2)$ on $\mathcal{D}^{7 \times 6}$ where
\begin{equation}
    S_1= \{u_0,u_2\}, \quad\quad S_2=\{v_5,v_6\}
\end{equation}
(see Figure \ref{fig: compatible}) with the correspondence highlighted with the same colors on both figures.
\end{example}

\begin{theorem} \label{thm: phi}
The map $\phi$ gives a one-to-one correspondence between perfect matchings on $G_{T,\gamma_{n+3}}$ and compatible pairs on $\mathcal{D}^{(n+1)\times n}$.
\end{theorem}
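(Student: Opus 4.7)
The plan is to reduce the theorem to a structural lemma about perfect matchings of the ladder $H_{2n+1}$, which is the $2\times(2n+2)$ grid graph with top vertices $A_0,\dots,A_{2n+1}$ and bottom vertices $B_0,\dots,B_{2n+1}$. The structural lemma I want is the following: every perfect matching $P$ of $H_{2n+1}$ decomposes uniquely as a disjoint union of vertical edges $A_iB_i$ and \emph{paired horizontal blocks} $\{A_jA_{j+1},B_jB_{j+1}\}$; in particular no ``offset'' configuration, such as $A_jA_{j+1}\in P$ together with $B_jB_{j+1}\notin P$, can occur.

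I would prove this lemma by contradiction. Suppose $A_jA_{j+1}\in P$ but $B_jB_{j+1}\notin P$. Since the vertical edges $A_jB_j$ and $A_{j+1}B_{j+1}$ are then unavailable, $B_j$ is forced to take $B_{j-1}B_j$ and $B_{j+1}$ is forced to take $B_{j+1}B_{j+2}$. Matching $A_{j-1}$ is then forced to use $A_{j-2}A_{j-1}$, which in turn forces $B_{j-3}B_{j-2}$ on the bottom, and so on: a cascade of forced edges propagates to the left boundary, where $A_0$ or $B_0$ (depending on the parity of $j$) ends up with no available partner. A symmetric cascade propagates to the right boundary, so either way we obtain a contradiction.

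Granted the structural lemma, $P$ is entirely determined by the set $J(P):=\{j:A_jA_{j+1},B_jB_{j+1}\in P\}\subseteq\{0,1,\dots,2n\}$ of starting columns of its paired blocks, since the remaining columns are forced to be filled by vertical edges. The map $\phi$ is exactly the relabelling $j=2i\mapsto u_i\in S_1$ (for $i=0,\dots,n$) and $j=2i-1\mapsto v_i\in S_2$ (for $i=1,\dots,n$). Two paired blocks at distinct positions $j\ne j'$ must satisfy $|j-j'|\geq 2$; comparing a $u$-block at $2i$ with a $v$-block at $2i'-1$, this becomes $|2i-2i'+1|\geq 2$, equivalently $i'-i\notin\{0,1\}$. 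Lemma \ref{lem: j-i} then asserts that $\phi(P)$ is a compatible pair, proving well-definedness.

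Injectivity is immediate from the structural lemma, since $P$ is recovered from $\phi(P)$ by placing paired blocks at the positions prescribed by $S_1$ and $S_2$ and filling vertical edges at all other columns. For surjectivity I would run the same construction starting from an arbitrary compatible pair $(S_1,S_2)$: by Lemma \ref{lem: j-i} and the index translation above, the resulting paired blocks are pairwise non-overlapping, so filling in vertical edges elsewhere yields a valid perfect matching $P$ with $\phi(P)=(S_1,S_2)$. The main obstacle is the ``no offset'' structural lemma; once that is secured the rest of the argument is a transparent identification of the non-overlap condition in the ladder with the arithmetic condition in Lemma \ref{lem: j-i}.
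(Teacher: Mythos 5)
Your proposal is correct, and it differs from the paper's argument in a meaningful way. The paper proves that $\phi$ is well-defined exactly as you do (a paired block $\{A_{2i}A_{2i+1},B_{2i}B_{2i+1}\}$ in $P$ blocks the adjacent horizontal edges, so the condition of Lemma \ref{lem: j-i} holds), and it also constructs the candidate inverse by placing paired blocks according to $(S_1,S_2)$ and filling in vertical edges; but it then concludes bijectivity by combining injectivity with the equality of cardinalities, both sides being $F_{2n+3}$ by Corollary \ref{clusFib} and Theorem \ref{compFib}. Your route replaces that counting step with a purely structural argument: the ``no offset'' lemma (every perfect matching of the ladder $H_{2n+1}$ splits into vertical edges and vertically aligned horizontal pairs), proved by your cascade of forced edges, after which $P$ is literally encoded by the set of block positions $J(P)$, and the translation $|j-j'|\geq 2 \Leftrightarrow i'-i\notin\{0,1\}$ identifies this encoding with Lemma \ref{lem: j-i}; injectivity and surjectivity then follow directly, with no appeal to the Fibonacci counts of Sections 3--4. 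This buys a self-contained bijection (independent of the enumerative results, which it in fact reproves), and it makes explicit a fact the paper only states in passing (``the vertical edges are uniquely determined by the horizontal edges'') and implicitly relies on for injectivity; the paper's version, by contrast, is shorter given that the counts are already available. As a small remark, your cascade can be compressed: cutting the ladder between columns $j$ and $j+1$ leaves an even number of vertices on the left, so the number of matching edges crossing the cut is even, which forces $A_jA_{j+1}$ and $B_jB_{j+1}$ to occur together; either justification is fine.
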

To prove the theorem, we first give the following lemma.

\begin{lemma} 
The map $\phi$ is well-defined.
\end{lemma}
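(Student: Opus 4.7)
The plan is to reduce the lemma directly to Lemma \ref{lem: j-i}, which says that $(S_1,S_2)$ is compatible on $\mathcal{D}^{(n+1)\times n}$ if and only if the set $\{j-i : u_i\in S_1,\ v_j\in S_2\}$ is disjoint from $\{0,1\}$. Thus well-definedness of $\phi$ amounts to ruling out the two bad configurations $j=i$ and $j=i+1$, and in both cases the contradiction will come from the fact that $P$ is a perfect matching on $G_{T,\gamma_{n+3}}$ (so every vertex is incident to exactly one edge of $P$).

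For the first bad case, I would assume $u_i\in S_1$ and $v_i\in S_2$ for some $i\geq 1$ (the lower bound coming from the indexing of $\mathcal{D}_2$). Unpacking the defining conditions of $\phi$, the first membership forces $A_{2i}A_{2i+1}\in P$, while the second forces $A_{2i-1}A_{2i}\in P$. These two edges share the vertex $A_{2i}$, contradicting the perfect matching property. For the second bad case, I would assume $u_i\in S_1$ and $v_{i+1}\in S_2$ for some $i\in\{0,1,\dots,n-1\}$; the same unpacking now yields $A_{2i}A_{2i+1}\in P$ and $A_{2i+1}A_{2i+2}\in P$, which share the vertex $A_{2i+1}$, again contradicting the perfect matching property. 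Note that the special case $i=0$ in the second configuration requires no separate treatment, because the definition of $\phi$ treats $u_0$ on the same footing as the other $u_i$.

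Having ruled out both bad cases, the output $(S_1,S_2)$ satisfies the hypothesis of Lemma \ref{lem: j-i} and is therefore a compatible pair on $\mathcal{D}^{(n+1)\times n}$, which is exactly the assertion that $\phi$ is well-defined. I do not expect a serious obstacle: the entire argument reduces to the vertex-cover property of $P$ applied at a single $A$-vertex, and the two cases are essentially symmetric. (An entirely analogous argument using the $B$-vertices of each bad configuration gives the same conclusion, and serves as a sanity check.)
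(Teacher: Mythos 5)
Your proposal is correct and takes essentially the same route as the paper: both reduce well-definedness to Lemma \ref{lem: j-i} and use the perfect-matching property (each vertex lies on exactly one edge of $P$) at a single shared vertex to exclude the differences $0$ and $1$. You phrase it as a contradiction at the vertices $A_{2i}$ and $A_{2i+1}$, while the paper states the contrapositive ($u_i\in S_1$ forces $v_i,v_{i+1}\notin S_2$); this is the same argument.
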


\begin{proof}
If $u_i \in S_1$ then $A_{2i}A_{2i+1},B_{2i}B_{2i+1} \in P$. Thus by definition of a perfect matching,
\begin{itemize}
    \item $A_{2i-1}A_{2i},B_{2i-1}B_{2i} \notin P \implies v_i \notin S_2$,
    \item $A_{2i+1}A_{2i+2},B_{2i+1}B_{2i+2} \notin P \implies v_{i+1} \notin S_2$.
\end{itemize}
Therefore,
\[\{j-i|u_i \in S_1, v_j \in S_2 \} \cap \{0,1\} = \emptyset \]
which implies that $(S_1,S_2)$ is a compatible pair on $\mathcal{D}^{(n+1) \times n}$ by Lemma \ref{lem: j-i}.
\end{proof}

Note that the rules of $\phi$ are defined on both directions, so we also obtain the inverse map $\phi^{-1}$ by construction. Note that the vertical edges on $P$ are uniquely determined by the horizontal edges.

\begin{example}
We consider again Example \ref{ex: phi}, with the horizontal edges given by
\begin{equation}
    \{A_0A_1,B_0B_1,A_4A_5,B_4B_5,A_9A_{10},B_9B_{10},A_{11}A_{12},B_{11}B_{12}\} \subset P
\end{equation}
corresponding to the compatible pair $(S_1,S_2)=(\{u_0,u_2\},\{v_5,v_6\})$. The vertical edges
\begin{equation}
    \{A_2B_2,A_3B_3,A_6B_6,A_7B_7,A_8B_8,A_{13}B_{13}\} \subset P
\end{equation}
is determined uniquely by the horizontal edges of $P$.
\end{example}

 Next, we show that $\phi$ is injective by proving the following lemma.

\begin{lemma}
The inverse map $\phi^{-1}$ is well-defined.
\end{lemma}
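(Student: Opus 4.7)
The plan is to construct $\phi^{-1}$ explicitly from a compatible pair $(S_1,S_2)$ on $\mathcal{D}^{(n+1)\times n}$, and then verify that the resulting edge set is indeed a perfect matching on $G_{T,\gamma_{n+3}}$. Concretely, I will define the candidate preimage $P=\phi^{-1}(S_1,S_2)$ by the three-step recipe: (i) for each $u_i\in S_1$, include the horizontal pair $A_{2i}A_{2i+1}$ and $B_{2i}B_{2i+1}$; (ii) for each $v_j\in S_2$, include the horizontal pair $A_{2j-1}A_{2j}$ and $B_{2j-1}B_{2j}$; (iii) for every column index $k\in\{0,1,\dots,2n+1\}$ that is not already covered in (i) or (ii), include the vertical edge $A_kB_k$. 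Once well-definedness is established, applying $\phi$ to this $P$ recovers $(S_1,S_2)$ by construction, so $\phi^{-1}$ is truly an inverse.

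The key point is to check that the horizontal edges chosen in (i)--(ii) are vertex-disjoint. For this I will catalogue which columns each selection uses: $u_i\in S_1$ occupies the column pair $\{2i,2i+1\}$, and $v_j\in S_2$ occupies the column pair $\{2j-1,2j\}$. Two distinct $u_i$'s (resp.\ two distinct $v_j$'s) evidently use disjoint columns. A $u_i$ and a $v_j$ can share a column only if $2i=2j$ or $2i+1=2j-1$ (the other matchups are impossible by parity), i.e.\ precisely when $j-i\in\{0,1\}$. By Lemma~\ref{lem: j-i}, this is ruled out by the compatibility of $(S_1,S_2)$, so the horizontal edges in (i)--(ii) form a partial matching.

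Next, since the horizontal edges are chosen in symmetric pairs on the top and bottom of the ladder, a column $k$ is covered on the top row $A_k$ if and only if it is covered on the bottom row $B_k$. Hence every uncovered column has both $A_k$ and $B_k$ free, and step (iii) adds exactly the vertical edge $A_kB_k$ to cover them. Thus $P$ is a perfect matching of $G_{T,\gamma_{n+3}}$. Finally, verifying $\phi(P)=(S_1,S_2)$ is immediate: by construction $A_{2i}A_{2i+1}$ and $B_{2i}B_{2i+1}$ lie in $P$ exactly when $u_i\in S_1$, and similarly for the $v_j$'s.

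The only real obstacle is the parity bookkeeping needed to confirm that the collision set between $S_1$-horizontals and $S_2$-horizontals is exactly $\{j-i\in\{0,1\}\}$, because this is the precise spot where the compatibility condition of Lemma~\ref{lem: j-i} is invoked. Everything else is routine: it reduces to the observation that the top-row and bottom-row edges are chosen symmetrically, so unmatched vertices always come in vertical pairs $\{A_k,B_k\}$.
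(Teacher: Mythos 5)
Your proof is correct and follows essentially the same route as the paper: both arguments reduce well-definedness to checking that the horizontal edges dictated by $S_1$ and $S_2$ are vertex-disjoint, which is exactly the exclusion $j-i\notin\{0,1\}$ from Lemma \ref{lem: j-i}, after which the vertical edges $A_kB_k$ fill the remaining columns. Your version merely makes explicit the parity bookkeeping and the verification that $\phi(P)=(S_1,S_2)$, which the paper leaves as an observation.
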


\begin{proof}
We need to prove that if $P$ is a subgraph of $G_{T,\gamma_{n+3}}$ corresponding to a compatible pair $(S_1,S_2)$ by \eqref{pertocom}, then $P$ is a perfect matching. By the observation above, this is equivalent to showing that every vertex belongs to at most one horizontal edge.\\
\begin{itemize}
    \item If $A_{2i-1}A_{2i}, B_{2i-1}B_{2i} \in P$, then $v_i \in S_2$, so,
    \[u_{i-1},u_{i} \notin S_1 \implies A_{2i-2}A_{2i-1},B_{2i-2}B_{2i-1},A_{2i}A_{2i+1},B_{2i}B_{2i+1} \notin P.\]
    \item If $A_{2i}A_{2i+1}, B_{2i}B_{2i+1} \in P$, then $u_i \in S_1$, so,
    \[u_{v_i},v_{i+1} \notin S_2 \implies A_{2i-1}A_{2i},B_{2i-1}B_{2i},A_{2i+1}A_{2i+2},B_{2i+1}B_{2i+2} \notin P.\]
\end{itemize}
Therefore, the vertices $A_{2i},B_{2i},A_{2i+1},B_{2i+1}$ belong to at most one horizontal edge.
\end{proof}

\begin{proof}[Proof of Theorem]
By the lemmas, the map $\phi$ is injective. Since the cardinalities
\[
\begin{aligned}
    F_{2n+3} &= \text{ number of perfect matching on } G_{T, \gamma_{n+3}} \\
    &= \text{ number of compatible pair on } \mathcal{D}^{(n+1) \times n}
\end{aligned}
\]
are the same by Theorem \ref{clusFib} and Theorem \ref{compFib}, the map $\phi$ gives a one-to-one correspondence with inverse $\phi^{-1}$.
\end{proof}

\subsection{Compatible pairs and nondecreasing Dyck paths}
\label{sec6.2}

Using the notation from Section \ref{sec4} and Section \ref{sec5}, we construct the map
\begin{equation}
    \theta: \{\text{compatible pair on } \mathcal{D}^{(n+1) \times n}\} \to \{\text{nondecreasing Dyck path of length } 2n+4\}
\end{equation}
in four steps as follows.
\begin{itemize}
\item [Step 1:] 
A compatible pair on $\mathcal{D}^{(n+1) \times n}$ is equivalent to a sequence of $n+1$ letters from $\{O, U, V\}$ by the following rules:
\begin{itemize}
    \item if $u_0 \in S_1$, then the first letter is $U$, otherwise, the first letter is $O$,
    \end{itemize}
while for $1\leq i\leq n$, 
\begin{itemize}
    \item if $u_i \notin S_1, v_i \notin S_2$, then the $i^{th}$ letter is $O$,
    \item if $u_i \in S_1, v_i \notin S_2$, then the $i^{th}$ letter is $U$,
    \item if $u_i \notin S_1, v_i \in S_2$, then the $i^{th}$ letter is $V$.
\end{itemize}
Note that the sequence is well-defined because we cannot have $u_i \in S_1$ and $v_i \in S_2$ simultaneously on a compatible pair. Moreover, if $u_i \in S_1$ then $v_{i+1} \notin S_2$, which means that the letter $U$ cannot be followed by the letter $V$.
\item [Step 2:] 
We can always divide a sequence of letters into blocks of the form 
\[[U,...,U,O,V,...,V]\]
(except for the last block, it can be empty or contain only a sequence of $U$'s) such that
\begin{itemize}
    \item each block contains exactly one letter $O$,
    \item there is only a consecutive number of $U$'s (maybe zero) before the letter $O$,
    \item there is only a consecutive number of $V$'s (maybe zero) after the letter $O$.
\end{itemize}
Suppose the number of letter $O$ is $k$, then there are $k+1$ blocks. For $1 \leq i \leq k$, let
\begin{itemize}
    \item $a_i\geq0$ be the number of letters $U$ in the $i^{th}$ block,
    \item $b_i\geq0$ be the number of letters $V$ in the $i^{th}$ block.
\end{itemize}
\item [Step 3:]
Then we construct $k+1$ mountains, where for $1\leq i\leq k$, the $i^{th}$ mountain has magnitude $$(d_i,e_i):=(a_i+b_i+1,a_i+1),$$ while the last mountain has magnitude
\begin{equation}
    (d_{k+1},e_{k+1}):=\left(n+2 - \sum_{i=1}^{k} (a_i+b_i+1),n+2 - \sum_{i=1}^{k} (a_i+1)\right).
\end{equation}
\item [Step 4:] Connect these $k+1$ mountains together, we get a nondecreasing Dyck path of length $2n+4$.
\end{itemize}

\begin{theorem}
The map $\theta$ gives a one-to-one correspondence between the compatible pairs on $\mathcal{D}^{(n+1)\times n}$ and nondecreasing Dyck paths of length $2n+4$.
\end{theorem}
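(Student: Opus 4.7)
The plan is to build $\theta^{-1}$ explicitly by inverting each of the four steps in turn; combined with the check that $\theta$ itself is well-defined, this gives a bijection. (A shortcut is available: Theorems \ref{compFib} and \ref{nonDyckFib} already show that both sides have cardinality $F_{2n+3}$, so it would suffice to verify injectivity, but an explicit inverse is worth stating for completeness.)

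First I would reformulate Step 1 as a bijection between compatible pairs on $\mathcal{D}^{(n+1)\times n}$ and the set $W_n$ of sequences $w=(w_0,\dots,w_n)\in\{O,U,V\}^{n+1}$ such that $w_0\neq V$ and no two consecutive letters form the substring $UV$. The first condition reflects the absence of $v_0$, and the second is precisely the forbidden adjacency characterised by Lemma \ref{lem: j-i}; the inverse reads $(S_1,S_2)$ off letter by letter. Then Step 2 is a bijection between $W_n$ and the set of tuples $(k;a_1,b_1,\dots,a_k,b_k;a_{k+1})$ of nonnegative integers with $\sum_{i=1}^{k}(a_i+b_i+1)+a_{k+1}=n+1$: the $k$ occurrences of $O$ in $w$ subdivide $w$ uniquely into $k$ blocks of the form $[U^{a_i}OV^{b_i}]$ followed by a trailing run $U^{a_{k+1}}$, where the $V$'s ending one block and the $U$'s starting the next are kept unambiguously separated by the forbidden substring condition.

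Step 3 sends the block data to a mountain sequence via the given formulas. Here I would carry out the two necessary verifications: the sign conditions $d_i-e_i=b_i\geq 0$ for $1\leq i\leq k$ and $d_{k+1}-e_{k+1}=-\sum_{i=1}^{k}b_i\leq 0$, which imply that the valley altitudes are nondecreasing, and the total width identity $\sum_{i=1}^{k+1}(d_i+e_i)=2n+4$, which follows by telescoping once the defining formulas are expanded. The inverse of Step 3 sets $a_i:=e_i-1$ and $b_i:=d_i-e_i$ for $i\leq k$ and $a_{k+1}:=d_{k+1}-1$, all nonnegative since each mountain satisfies $d_j,e_j\geq 1$ and the first $k$ mountains satisfy $d_i\geq e_i$. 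Finally, Step 4 is the tautological bijection between tuples of mountains of the correct type and nondecreasing Dyck paths of length $2n+4$ obtained by concatenation.

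The main difficulty is the bookkeeping in Step 3: one must check that the formula for the last mountain $(d_{k+1},e_{k+1})$ is simultaneously compatible with the constraint $\sum_i d_i=\sum_i e_i=n+2$ (equal numbers of up and down steps in a Dyck path of length $2n+4$) and yields $d_{k+1},e_{k+1}\geq 1$. Both reduce to elementary identities in $k$, the $a_i$'s, and the $b_i$'s. Composing the four inverses yields $\theta^{-1}$, and the theorem follows.
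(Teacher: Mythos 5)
Your proposal is correct and follows essentially the same four-step construction as the paper, including the identical inverse formulas $a_i=e_i-1$, $b_i=d_i-e_i$ for $i\leq k$ and $a_{k+1}=d_{k+1}-1$. The one substantive difference is how bijectivity is concluded: the paper checks that $\theta$ and the reversed construction $\theta^{-1}$ are each well-defined, deduces injectivity, and then invokes the equal cardinalities $F_{2n+3}$ from Theorem \ref{compFib} and Theorem \ref{nonDyckFib} to finish, whereas you make each of the four steps an explicit bijection with a two-sided inverse --- characterizing the image of Step 1 via Lemma \ref{lem: j-i} as the sequences in $\{O,U,V\}^{n+1}$ with first letter in $\{O,U\}$ and no substring $UV$, noting the uniqueness of the block decomposition in Step 2, and verifying the sign conditions, the width identity $\sum_{i=1}^{k+1}(d_i+e_i)=2n+4$, and $d_{k+1},e_{k+1}\geq 1$ in Step 3. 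Your route is therefore self-contained and does not depend on the two Fibonacci counting theorems, at the cost of slightly more bookkeeping (including the routine check that concatenating mountains produces valleys exactly at the junctions); the paper's route is shorter at the final step but only yields the bijection after the enumerative results are in place.
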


\begin{example}
When $n=6$, $(S_1,S_2)=(\{u_0, u_2\},\{v_5,v_6\})$ is a compatible pair on $\mathcal{D}^{7 \times 6}$, see Figure \ref{fig: compatible}. This compatible pair is equivalent to the sequence of 7 letters
\[(U,O,U,O,O,V,V) .\]
We divide it into four blocks
\[[U,O],\quad [U,O]\quad [O,V,V],\quad [].\]
These blocks correspond to the mountains with magnitudes $(2,2),(2,2),(3,1)$, while the last mountain has magnitude $(1,3)$, see Figure \ref{fig: dyck}.
\end{example}

To prove the theorem, we have the following lemmas.

\begin{lemma}
The map $\theta$ is well-defined.
\end{lemma}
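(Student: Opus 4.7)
The plan is to verify each of the four construction steps of $\theta$ in turn. Steps 1 and 2 rely on structural observations from Lemma \ref{lem: j-i}, while Steps 3 and 4 reduce to direct counting identities.

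For Step 1, the letter at position $0$ is unambiguous from the definition, and for $1 \leq i \leq n$ the only possible failure is that both $u_i \in S_1$ and $v_i \in S_2$ hold, which contradicts Lemma \ref{lem: j-i} since then $j - i = 0 \in \{0, 1\}$. Applying the same lemma to a pair $(u_i, v_{i+1})$ with $j - i = 1$ shows that the letter $U$ cannot be immediately followed by the letter $V$. Consequently the subword strictly between any two consecutive $O$'s, and also before the first $O$ or after the last $O$, has the shape $V^{\ast}U^{\ast}$. Attaching each maximal $V^{\ast}$ run to the preceding $O$ and each maximal $U^{\ast}$ run to the following $O$ yields the unique required decomposition into blocks of the form $[U^{\ast}\, O\, V^{\ast}]$, with an optional trailing block consisting only of $U$'s; this settles Step 2.

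For Step 3, I would introduce $a_{k+1}$ for the number of $U$'s in the trailing block and use the counting identities $k + |S_1| + |S_2| = n + 1$, $\sum_{i=1}^{k} a_i = |S_1| - a_{k+1}$, and $\sum_{i=1}^{k} b_i = |S_2|$. The first $k$ mountains have magnitudes $(d_i, e_i) = (a_i + b_i + 1,\; a_i + 1)$, which automatically satisfy $d_i \geq e_i \geq 1$. Substituting the identities into the defining formula for the last mountain produces $(d_{k+1}, e_{k+1}) = (a_{k+1} + 1,\; a_{k+1} + |S_2| + 1)$, with both coordinates at least $1$ and $d_{k+1} \leq e_{k+1}$, so it is a valid final mountain. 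A direct summation then gives $\sum_{i=1}^{k+1}(d_i + e_i) = 2n+4$ as required in Step 4, and the altitude of the $i$-th valley is $\sum_{j \leq i} b_j$, a nondecreasing non-negative sequence which returns to $0$ after the last mountain because $d_{k+1} - e_{k+1} = -|S_2| = -\sum_{j=1}^{k} b_j$.

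The main obstacle is Step 2: the forbidden substring $UV$ must be ruled out globally in order to justify the block decomposition, and once that is in place every remaining verification reduces to an algebraic identity. I would separately flag the degenerate case $k = 0$, where the whole sequence is a (possibly empty) string of $U$'s, producing a single terminal mountain of magnitude $(n+2,\, n+2)$ which is vacuously a nondecreasing Dyck path of length $2n+4$.
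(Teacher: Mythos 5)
Your proof is correct and follows essentially the same route as the paper: the paper's own proof of this lemma consists precisely of the two verifications $d_i=a_i+b_i+1\geq a_i+1=e_i$ for $1\leq i\leq k$ and $\sum_{i=1}^{k+1}d_i=\sum_{i=1}^{k+1}e_i=n+2$, with the well-definedness of the letter sequence and the block decomposition handled (as you do, via Lemma \ref{lem: j-i}) inside the construction itself. You are in fact slightly more thorough than the paper, since you also compute $(d_{k+1},e_{k+1})=(a_{k+1}+1,\,a_{k+1}+|S_2|+1)$ to confirm the last mountain has positive entries with $d_{k+1}\leq e_{k+1}$, a point the paper's proof leaves implicit.
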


\begin{proof}
The corresponding Dyck path is nondecreasing because for any $1\leq i\leq k$,
\[d_i=a_i+b_i+1 \geq a_i+1=e_i.\]
Furthermore, the corresponding Dyck path has a length of $2n+4$ because
\[\sum_{i=1}^{k+1}d_i = \sum_{i=1}^{k+1} e_i=n+2.\]
\end{proof}

The inverse map $\theta^{-1}$ is determined by reversing the steps of the map $\theta$. From a nondecreasing Dyck path with $k+1$ mountains $(d_i,e_i)$ of length $2n+4$, we have
\[a_i=e_i-1,\quad\quad b_i=d_i-e_i\]
for any $1\leq i\leq k$. Next, we construct $k$ blocks ($1 \leq i \leq k$) 
$$[\;\;\underbrace{U,...,U}_{a_i}\;,O,\;\underbrace{V,...,V}_{b_i}\;\;]$$
while the last block just includes $d_{k+1}-1$ letters $U$. After that, we get a sequence of $d_1+d_2+...+d_k+(d_{k+1}-1)=n+1$ letters. Then there is a compatible pair $(S_1, S_2)$ that is equivalent to this sequence by the correspondence in Step 1 above. 

\begin{lemma}
The map $\theta^{-1}$ is well-defined.
\end{lemma}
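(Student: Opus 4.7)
The plan is to verify, for each nondecreasing Dyck path of length $2n+4$, that the reverse construction defined just before the lemma produces a sequence of $n+1$ letters which, via the correspondence in Step~1 of $\theta$, yields an honest compatible pair on $\mathcal{D}^{(n+1)\times n}$. I would organize this into three verifications: (i) the parameters $a_i, b_i$ and the size $d_{k+1}-1$ of the terminal all-$U$ block are non-negative integers; (ii) the total number of letters equals exactly $n+1$; and (iii) the resulting letter sequence satisfies the structural constraints implicit in Step~1, namely that the first letter is not $V$ and no $U$ is immediately followed by $V$.

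For (i), each mountain magnitude satisfies $d_i, e_i \geq 1$, and the nondecreasing property of the path forces $d_i \geq e_i$ for $1 \leq i \leq k$; hence $a_i = e_i - 1 \geq 0$ and $b_i = d_i - e_i \geq 0$, while $d_{k+1} \geq 1$ ensures $d_{k+1} - 1 \geq 0$. For (ii), the total length $2n+4$ forces $\sum_{i=1}^{k+1} d_i = \sum_{i=1}^{k+1} e_i = n+2$, and a direct computation gives $\sum_{i=1}^{k}(a_i + b_i + 1) + (d_{k+1} - 1) = \sum_{i=1}^{k} d_i + d_{k+1} - 1 = n+1$.

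The main content lies in (iii). Translating the compatibility criterion of Lemma~\ref{lem: j-i} through Step~1 of $\theta$, a sequence in $\{O, U, V\}^{n+1}$ represents a compatible pair if and only if its first letter is not $V$ and no occurrence of $U$ is immediately followed by $V$. Within each block $[U,\dots,U,O,V,\dots,V]$ for $1 \leq i \leq k$, every $U$ is followed either by another $U$ or by the central $O$, never by $V$; within the terminal all-$U$ block there are no $V$'s at all. The possible transitions between consecutive blocks (end of block $i$ to start of block $i+1$, or end of block $k$ into the terminal block) reduce to $V \to U$, $V \to O$, $O \to U$, or $O \to O$, so no $U \to V$ adjacency is ever created at a block seam. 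Moreover the first block begins with either $U$ (if $a_1 \geq 1$) or $O$ (if $a_1 = 0$), so the first letter is never $V$.

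The main obstacle I anticipate is the careful bookkeeping of degenerate cases --- empty halves of middle blocks ($a_i = 0$ or $b_i = 0$), an empty terminal block ($d_{k+1} = 1$), and the distinguished role of position $0$ where no $v_0$ exists --- but in each configuration the block structure forbids the only dangerous adjacency $U \to V$, so the constructed sequence lies in the image of Step~1 of $\theta$ and therefore corresponds, by reversing that step, to a genuine compatible pair on $\mathcal{D}^{(n+1)\times n}$, completing the argument.
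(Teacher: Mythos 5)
Your proposal is correct and follows essentially the same route as the paper: verify that the reconstructed blocks produce a sequence of $n+1$ letters whose first letter is $O$ or $U$ and in which no $U$ is immediately followed by $V$, which by Lemma \ref{lem: j-i} (via Step 1 of $\theta$) means it comes from a genuine compatible pair. Your write-up simply makes explicit the bookkeeping (non-negativity of $a_i,b_i$, the letter count $n+1$, and the block-seam transitions) that the paper's brief proof leaves implicit.
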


\begin{proof}
Note that the first letter in the sequence is $O$ or $U$. Moreover, by the way of connecting the blocks, there is no letter $U$ followed by a letter $V$. So the sequence is indeed induced from a compatible pair.
\end{proof}

\begin{proof}[Proof of Theorem] By the lemmas, the map $\theta$ is injective. Since the cardinalities
\[
\begin{aligned}
    F_{2k+3} &= \text{number of compatible pair on } \mathcal{D}^{(n+1) \times n}\\
    &= \text{number of nondecreasing Dyck path of length } 2n+4
\end{aligned}
\]
are the same by Theorem \ref{compFib} and Theorem \ref{nonDyckFib}, the map $\theta$ gives a one-to-one correspondence with inverse $\theta^{-1}$.
\end{proof}

\subsection{Perfect matchings and nondecreasing Dyck paths}
\label{sec6.3}
The map 
\begin{equation}
    \psi: \{\text{perfect matching on } G_{T, \gamma_{n+3}}\} \to \{\text{nondecreasing Dyck path of length } 2n+4\}
\end{equation}
can be expressed as a composition $\psi = \theta \circ \phi$. However, to make it explicit, we construct it directly instead of using composition. The construction is similar to that of the map $\theta$, by modifying step 1.

\begin{itemize}
\item [Step 1':]
A perfect matching on the snake graph $G_{T, \gamma_{n+3}}$ is equivalent to a sequence of $n+1$ letters from $\{O, U, V\}$ by the following rules:
\begin{itemize}
    \item iIf $A_0A_1,B_0B_1 \in P$, then the first letter is $U$, otherwise, the first letter is $O$,
\end{itemize}
while for $1\leq i\leq n$,
\begin{itemize}
    \item if $A_{2i}B_{2i} \in P$, then the $i^{th}$ letter is $O$,
    \item if $A_{2i}A_{2i+1},B_{2i}B_{2i+1} \in P$, then the $i^{th}$ letter is $U$,
    \item if $A_{2i-1}A_{2i},B_{2i-1}B_{2i} \in P$, then the $i^{th}$ letter is $V$.
\end{itemize}
\end{itemize}
Similar to the arguments given in Section \ref{sec6.2}, $\psi$ is well-defined and gives a one-to-one correspondence between the perfect matchings on the snake graph $G_{T,\gamma_{n+3}}$ associated with annulus with two marked points, and the nondecreasing Dyck paths of length $2n+4$.

\bibliographystyle{abbrv}
\bibliography{bibliography.bib}

\end{document}